\newtheorem{thm}{Theorem}[section]
\newtheorem{cor}[thm]{Corollary}
\newtheorem{lem}[thm]{Lemma}
\newtheorem{prop}[thm]{Proposition}
\newtheorem{defn}[thm]{ \bf{Definition}}
\theoremstyle{remark}
\newtheorem{remark}[thm]{Remark}
\newcommand{\EQ}[1]{\begin{align*}\begin{split} #1 \end{split}\end{align*}}
\newcommand{\EQn}[1]{\begin{align}\begin{split} #1 \end{split}\end{align}}
\newcommand{\EQnn}[1]{\begin{align} #1 \end{align}}
\newcommand{\EQnnsub}[1]{\begin{subequations}\begin{align} #1 \end{align}\end{subequations}}
\newcommand{\enu}[1]{\begin{enumerate} #1 \end{enumerate}}
\newcommand{\Del}[1]{}
\def\norm#1{\left\|#1\right\|}
\def\normo#1{\|#1\|}
\def\normb#1{\big\|#1\big\|}
\def\abs#1{\left|#1\right|}
\def\absb#1{\big|#1\big|}
\def\brk#1{\left(#1\right)}
\def\brko#1{(#1)}
\def\brkb#1{\big(#1\big)}
\def\fbrk#1{\left\lbrace#1\right\rbrace}
\def\fbrko#1{\lbrace#1\rbrace}
\def\fbrkb#1{\big\lbrace#1\big\rbrace}
\def\jb#1{\langle#1\rangle}
\def\wt#1{\widetilde{#1}}
\def\wh#1{\widehat{#1}}
\def\wb#1{\overline{#1}}
\def\pd{\partial}
\newcommand{\ra}{{\rightarrow}}
\newcommand{\hra}{{\hookrightarrow}}
\def\loe{\leqslant}
\def\goe{\geqslant}
\def\lsm{\lesssim}
\def\gsm{\gtrsim}
\newcommand{\N}{{\mathbb N}}
\newcommand{\T}{{\mathbb T}}
\newcommand{\R}{{\mathbb R}}
\newcommand{\C}{{\mathbb C}}
\newcommand{\Z}{{\mathbb Z}}
\newcommand{\PP}{{\mathbb P}}
\newcommand{\F}{{\mathcal{F}}}
\newcommand{\A}{{\mathcal{A}}}
\newcommand{\M}{{\mathcal{M}}}
\newcommand{\Sch}{{\mathcal{S}}}
\newcommand{\ZZ}{{\mathcal{Z}}}
\newcommand{\dd}{{\mathrm{d}}}
\newcommand{\supp}{{\mbox{supp}\ }}
\def\dx{\mathrm{\ d} x}
\def\dy{\mathrm{\ d} y}
\def\ds{\mathrm{\ d} s}
\newcommand{\re}{{\mathrm{Re}}}
\newcommand{\im}{{\mathrm{Im}}}
\def\ep{\varepsilon}
\def\al{\alpha}
\def\Om{\Omega}
\def\om{\omega}
\def\ph{\varphi}
\def\th{\theta}
\def\si{\sigma}
\def\de{\delta}
\def\De{\Delta}
\def\la{\lambda}
\def\ga{\gamma}
\newcommand{\I}{\infty}
\def\rev#1{\frac{1}{#1}}
\def\half#1{\frac{#1}{2}}
\def\bx{\square}
\numberwithin{equation}{section}
\begin{document}
\title[3D cubic NLS]{Almost sure well-posedness and scattering of the 3D cubic nonlinear Schr\"odinger equation}

\author{Jia Shen}
\address{(J. Shen) Center for Applied Mathematics\\
Tianjin University\\
Tianjin 300072, China}
\email{shenjia@tju.edu.cn}

\author{Avy Soffer}
\address{(A. Soffer) Rutgers University\\
	Department of Mathematics\\
	110 Frelinghuysen Rd.\\
	Piscataway, NJ, 08854, USA\\}
\email{soffer@math.rutgers.edu}
\thanks{}

\author{Yifei Wu}
\address{(Y. Wu) Center for Applied Mathematics\\
	Tianjin University\\
	Tianjin 300072, China}
\email{yerfmath@gmail.com}
\thanks{}

\subjclass[2010]{35K05, 35B40, 35B65.}
\keywords{Non-linear Schr\"odinger equations, long time behaviour, random data theory, the Wiener randomization}

\date{}

\begin{abstract}\noindent
We study the random data problem for 3D, defocusing, cubic nonlinear Schr\"odinger equation in $H_x^s(\R^3)$ with $s<\half 1$. First, we prove that the almost sure local well-posedness holds when $\frac{1}{6}\loe s<\half 1$ in the sense that the Duhamel term belongs to $H_x^{1/2}(\R^3)$.

Furthermore, we prove that the global well-posedness and scattering hold for randomized, radial, large data $f\in H_x^{s}(\R^3)$ when $\frac{17}{40}< s<\half 1$. The key ingredient is to control the energy increment including the terms where the first order derivative acts on the linear flow, and our argument can lower down the order of derivative more than $\frac12$. To our best knowledge, this is the first almost sure large data global result for this model.
\end{abstract}

\maketitle

\tableofcontents

\section{Introduction}

In this paper, we consider the nonlinear Schr\"odinger equations (NLS):
\EQn{
	\label{eq:nls}
	\left\{ \aligned
	&i\pd_t u + \De u = \mu |u|^p u, \\
	& u(0,x) = u_0(x),
	\endaligned
	\right.
}
where $p>0$, $\mu=\pm1$, and
$u(t,x):\R\times\R^d\rightarrow \C$ is a complex-valued function. The positive sign ``$+$" in nonlinear term of \eqref{eq:nls} denotes defocusing source,   and the negative sign ``$-$" denotes the focusing one.

 The equation \eqref{eq:nls} has conserved mass
\EQn{\label{NLS:mass}
	M(u(t)) :=\int_{\R^d} \abs{u(t,x)}^2 \dx=M(u_0),
}
and energy
\EQn{\label{NLS:Energy}
	E(u(t)) :=\int_{\R^d} \half 1\abs{\nabla u(t,x)}^2 \dx + \mu \int_{\R^d} \rev{p+2} \abs{u(t,x)}^{p+2} \dx=E(u_0).
}

The class of solutions to equation (\ref{eq:nls}) is invariant under the scaling
\begin{equation}\label{eqs:scaling-alpha}
u(t,x)\to u_\lambda(t,x) = \lambda^{\frac{2}{p}} u(\lambda^2 t, \lambda x) \ \ {\rm for}\ \ \lambda>0,
\end{equation}
which maps the initial data as
\EQn{
u(0)\to u_{\lambda}(0):=\lambda^{\frac{2}{p}} u_0(\lambda x) \ \ {\rm for}\ \ \lambda>0.
}
Denote
$$
s_c=\frac d2-\frac{2}{p},
$$
then the scaling  leaves  $\dot{H}^{s_{c}}$ norm invariant, that is,
\begin{eqnarray*}
	\|u(0)\|_{\dot H^{s_{c}}}=\|u_{\lambda}(0)\|_{\dot H^{s_{c}}}.
\end{eqnarray*}
This gives the scaling critical exponent $s_c$. Let
$$
2^*=\I, \mbox{ when } d=1 \mbox{ or } d=2;  \quad
2^*=\frac4{d-2}, \mbox{ when }  d\goe 3.
$$
Therefore, according to the conservation law, the equation is called mass or $L_x^2$ \textit{critical} when $p=\frac 4d$, and energy or $\dot H_x^1$ \textit{critical} when $p=\frac4{d-2}$. Moreover, when $\frac 4d<p<2^*$, we say that the equation is \textit{inter-critical}.

Let us now take a brief overview on the well-posedness and scattering theory of NLS \eqref{eq:nls}. Kato \cite{Kat87poincare} first proposed a method based on the contraction mapping and the Strichartz estimate, and obtained the local well-posedness when $p<\frac{4}{d-2}$ in $H_x^1$. See also \cite{Tsu87Funk} by Tsutsumi for the $L_x^2$-solution when $p<\frac4d$. Note that the above two results concerned the sub-critical cases when $s>s_c$. The local well-posedness in the critical sense was solved by Cazenave and Weissler, see \cite{CW89remarks}. Moreover, we refer the readers to Cazenave's textbook \cite{Caz03book} for more complete local results of NLS.

The global well-posedness and scattering are basic topics for the long time behaviour of NLS. Lin and Strauss \cite{LS78JFA} obtained the large data scattering for the 3D, defocusing, cubic NLS with decaying data. Their argument relied on the Morawetz estimate, which was first discovered by Morawetz \cite{Mor68London} for the Klein-Gordon equations. The global well-posedness and scattering in energy space were solved by Ginibre and Velo \cite{GV85JMPA} in the defocusing inter-critical cases for $d\goe 3$. In this paper, we mainly focus on the results in $L_x^2$-based Sobolev spaces, thus we do not mention the vast scattering theory for NLS with decaying data.

The main breakthrough of the energy critical NLS was owed to Bourgain \cite{Bou99JAMS}. He introduced the powerful induction-on-energy method and the localised Morawetz estimate to study the defocusing equations with radial data for $d=3,4$. Bourgain's method was then further exploited extensively: Nakanishi \cite{Nak99JFA} introduced a modified version of Morawetz estimate for low dimensions, and solved the energy scattering in the inter-critical cases for $d=1,2$; Bourgain's 3D result was extended to non-radial by Colliander, Keel, Staffilani, Takaoka, and Tao \cite{CKSTT08Annals}, based on a localised version of their interaction Morawetz estimate \cite{CKSTT04CPAM}. The results for defocusing energy critical NLS in higher dimensions were obtained by Ryckman and Visan \cite{RV07AJM,Vis07Duke}. Very few large data global results were available for supercritical models, however, see Li's recent groundbreaking work \cite{LiDuke} on a supercritical wave model.

For the focusing equations, Kenig and Merle \cite{KM06Invent} introduced the concentration compactness method to give a complete dynamical characterization below the energy of ground state, for the energy critical NLS in $d=3,4,5$ with radial data. Their study opened a way to study the scattering of focusing equations below the ground state. Then, Duyckaerts, Holmer, and Roudenko \cite{DHR08MRL,HR08CMP} gave the result for 3D, focusing, cubic NLS, which is a typical model in the inter-critical cases. For the non-radial focusing energy-critical NLS, Killip and Visan \cite{KV10AJM} solved the $d\goe 5$ case, and Dodson \cite{Dod19ASENS} solved the 4D case. The scattering of 3D, focusing, energy-critical NLS in the non-radial case remains open. 

The concentration compactness method also enlighten the development of mass critical NLS. Killip, Tao, Visan, and Zhang \cite{TVZ07Duke,KVZ08APDE,KTV09JEMS} studied the mass critical NLS in the radial case. Dodson then remove the radial assumption, and completely solved the global well-posedness and scattering of mass critical NLS in the defocusing case \cite{Dod12JAMS,Dod16Duke,Dod16AJM}, and in the focusing case below the mass of ground state \cite{Dod15Adv}.

Next, we focus on the well-posedness results of 3D, defocusing, cubic NLS, for which the critical regularity exponent $s_c=1/2$. We have learned that the equation is local well-posed in $\dot H_x^{1/2}$, while the global well-posedness and scattering hold in a smaller space $H_x^1$. A natural question is to ask the weakest space $X$ to guarantee the global well-posedness in $\dot H_x^{1/2} \cap X$. Bourgain \cite{Bou98JAM} used the high-low decomposition method (introduced in \cite{Bou98IMRN}) to give $X=H_x^s$ with $s>\frac{11}{13}$. The lower bound was then improved by ``I-method" gradually in \cite{CKSTT02MRL,CKSTT04CPAM,Su12MRL}, and so far, the best result is $s>\frac{5}{7}$. Under the radial assumption, Dodson \cite{Dod19CJM} showed that the result holds for almost critical space $s>\frac12$. 

Note that $X$ spaces in the above mentioned results are all $\dot H_x^{1/2}$ super-critical. Recently, Dodson \cite{Dodson20critical} gave a result in the critical space $X=\dot W_x^{11/7,7/6}$, based on the observation that linear solution becomes more regular with initial data in $L_x^p$ with $p<2$. Using this observation and the method in \cite{BDSW18sub-critical}, the authors \cite{SW20cubicNLS}  obtained that $X=\dot W^{s,1}$ for $s>\frac{12}{13}$, which is a sub-critical space with the order of derivative less than $1$. 

Currently, there is no result for the global well-posedness of 3D defocusing cubic NLS merely in $\dot H_x^{1/2}$ or $H_x^{1/2}$. Kenig and Merle \cite{KM10TranAMS} initiated another approach towards this problem. They proposed the concept of ``conditional scattering", namely the global well-posedness and scattering hold for the solution that is uniformly bounded in the critical space on the maximal existence interval. Generally for the inter-critical NLS, no global well-posed result is known in the critical space. See \cite{BDSW19inter-critical,Dodson20critical} for some related results. 

Now, we turn to the probability theory of NLS. Although it is  ill-posedness for NLS below the critical regularity due to the result of Christ, Colliander, and Tao \cite{CCT03illposed}, Bourgain \cite{Bou94CMP,Bou96CMP} first introduced a probabilistic method to study the well-posedness problem for periodic NLS for ``almost'' all the initial data in super-critical spaces, based on the Gibbs measure constructed by Lebowitz-Rose-Speer \cite{LRS88JSP}. The probabilistic well-posedness result for super-critical wave equations on compact manifolds was also studied by Burq and Tzvetkov \cite{BT08inventI,BT08inventII}. There have been extensive studies about such subject since then, and we refer the readers to \cite{BOP19note} for more complete overviews.

Next, we only review the study of random data theory for NLS on $\R^d$. There are several ways of randomization for the initial data. We recall the one relying on the unit-scale decomposition in frequency, which is named as the Wiener randomization (see \cite{ZF12JMFM}). Then, under the Wiener randomization, B\'enyi, Oh, and Pocovnicu \cite{BOP15TranB} studied the cubic NLS when $d\goe 3$. They proved the almost sure local well-posedness, small data scattering, and a ``conditional'' global well-posedness under some a priori hypothesis. Afterwards, the almost sure local results were improved by B\'enyi-Oh-Pocovnicu \cite{BOP19TranB} and Pocovnicu-Wang \cite{PW18lp}. The random data well-posedness for quintic NLS was studied by Brereton \cite{Bre19tunis}. Later, Oh, Okamoto, and Pocovnicu \cite{OOP19DCDS} studied the almost sure global well-posedness  for energy critical NLS on $d=5,6$. Here, we only focus on the NLS, and see L\"uhrmann-Mendelson \cite{LM14CPDE} for the first probabilistic result of non-linear wave equations on Euclidean space.


The large data almost sure scattering was first obtained by Dodson, L\"uhrmann, and Mendelson \cite{DLM20AJM} in the context of 4D, defocusing, energy-critical, nonlinear wave equation with randomized radial data, using a double bootstrap argument combining the energy and Morawetz estimates. The result was extended by Bringmann to non-radial 4D case \cite{Bri18NLW}, and to radial 3D case \cite{Bri20APDE}. The related results on non-radial energy-critical nonlinear Klein-Gordon equations were studied by Chen and Wang \cite{CW20NLW}. The first almost sure scattering result for NLS was given by Killip, Murphy, and Visan \cite{KMV19CPDE}. They proved the result for 4D, defocusing, energy-critical case with almost all the radial initial data in $H_x^s$ for $\frac56<s<1$. This result was then improved to $\frac12<s<1$ by Dodson, L\"uhrmann, and Mendelson \cite{DLM19Adv}. 

We remark that the Wiener randomization is closely related to the modulation space introduced by Feichtinger \cite{Fei83modulation}. Such space has been applied to non-linear evolution equations before the development of Wiener randomization, dating back to the results of Wang, Zhao, Guo, and Hudzik \cite{WZG06JFA,WH07JDE}.

There are also other kinds of randomization for NLS on $\R^d$. Burq, Thomann, and Tzvetkov \cite{BTT13Fourier} constructed a Gibbs measure for NLS with harmonic potential, and proved almost sure $L^2$-scattering for 1D, defocusing NLS with $p\goe 5$, after changing the Schr\"odinger equations into the ones with harmonic oscillator potential by lens transform. Recently, Burq and Thomann \cite{BT20scattering} improved the result to all the short range exponents $p>3$. See also \cite{Lat20scattering} for higher dimensional extensions. 

In addition, Murphy \cite{Mur19ProAMS} introduced a new kind of randomization based on the physical space unit-scale decomposition, and studied the almost sure existence and uniqueness of wave operator for $L^2$ sub-critical NLS above the Strauss exponent. Then, Nakanishi and Yamamoto \cite{NY19MRL} extended the result below Strauss exponent, and applied the method on some quadratic Schr\"odinger models. We also mention that Bringmann's almost sure scattering results \cite{Bri20APDE,Bri18NLW} include other kinds of randomization for nonlinear wave equations on $\R^d$, involving the micro-local and the annuli decompositions of initial data.

To the best of our knowledge, the only study so far of global well-posedness and scattering for inter-critical NLS seems Burq, Thomann, and Tzvetkov's 1D $L^2$-scattering result \cite{BTT13Fourier}, based on the invariant measure for the Schr\"odinger equations with harmonic potential. Very recently, we learnt that Duerinckx \cite{Due21NLS} also studied the global well-posedness of cubic NLS adding a tiny dissipation with spatial inhomogeneous random initial data. In this paper, we intend to study a typical model of inter-critical NLS, namely the 3D, defocusing, cubic NLS under the Wiener randomization, at super-critical regularity. 

Before stating the main result, we give the definition of the randomization:
\begin{defn}[Wiener randomization]\label{defn:randomization}
Let $\wt \psi\in C_0^\I(\R^3)$ be a real-valued function such that $\wt \psi\goe 0$, $\wt \psi(-\xi)=\wt \psi(\xi)$ for all $\xi\in\R^3$ and 
\EQ{
	\wt \psi(\xi) =\left\{ \aligned
	&1\text{, when $\xi\in [-\frac12,\frac12]^3$,}\\
	&0\text{, when $\xi\notin [-1,1]^3$.}
	\endaligned
	\right.
}
Let
\EQ{
	\psi(\xi):=\frac{\wt \psi(\xi)}{\sum_{k\in\Z^3}\wt \psi(\xi-k)}.
}
Then, $\psi\in C_0^\I(\R^3)$ is a real-valued function, satisfying for all $\xi\in\R^3$, $0\loe \psi\loe 1$, $\supp \psi\subset[-1,1]^3$, $\psi(-\xi)=\psi(\xi)$, and $\sum_{k\in\Z^3}\psi(\xi-k)=1$.

For any $k\in\Z^3$, define $\psi_k(\xi)=\psi(\xi-k)$. Denote the Fourier transform by $\F$. Then, we define 
\EQ{
	\bx_kf=\F^{-1}\brkb{\psi_k\F f}.
}
Let $(\Om, \A, \PP)$ be a probability space. Let $\fbrk{g_k}_{k\in\Z^3}$ be a sequence of zero-mean, complex-valued Gaussian random variables on $\Om$, where the real and imaginary parts of $g_k$ are independent. Then, for any function $f$, we define its randomization $f^\om$ by
\EQn{\label{eq:randomization}
	f^\om=\sum_{k\in\Z^3} g_k(\om)\bx_k f.
}
\end{defn}

In the following, we use the statement ``almost every $\om\in\Om$, $PC(\om)$ holds'' to mean that 
$$
\PP\Big(\big\{\om\in \Om: PC(\omega)\> \mbox{ holds} \big\}\Big)=1.
$$

Now, we study the 3D, defocusing, cubic NLS with randomized initial data:
\EQn{
	\label{eq:nls-3D}
	\left\{ \aligned
	&i\pd_t u + \De u =  |u|^2 u, \\
	& u(0,x) = f^\om(x).
	\endaligned
	\right.
}
For this model under the probabilistic setting, the local well-posedness, small data scattering, and conditional global well-posedness results have been established before.

We first recall the local results for \eqref{eq:nls-3D}. B\'enyi, Oh, and Pocovnicu \cite{BOP19TranB} proved the local result with $f\in H_x^{s}$ when $\frac25\si<s<\frac12$ in the sense that Duhamel term belongs to $C(I;H_x^{\si})$ for any fixed $\frac12\loe \si\loe 1$. They also proved the improved local result when $\frac16<s<\frac12$ (except for the lower endpoint) by weakening the definition of local solution:
\EQ{
	u-z_1-z_3-\cdot\cdot\cdot-z_{2k-1}\in C(I;H_x^{1/2}),
}
where the function $z_k\in C(I;H_x^{s_k})$ is defined by iteration with some $s_k<\frac12$. Pocovnicu and Wang \cite{PW18lp} also proved the local result in $L_x^2$ with Duhamel term in $C(I;L_x^4)$. 

There are also global results of \eqref{eq:nls-3D}, either with small data restriction or with suitable a priori assumptions. B\'enyi, Oh, and Pocovnicu \cite{BOP15TranB} proved the almost sure small data global well-posedness and scattering for $\frac14<s<\frac12$. Furthermore, they \cite{BOP15TranB} also proved the random data global well-posedness when $\frac14<s<\frac12$ under two a priori assumptions: 
\begin{itemize}
	\item The Duhamel term is uniformly bounded in the critical space $H_x^{1/2}$ in the probabilistic setting.
	\item The 3D, defocusing, cubic NLS is globally well-posed with deterministic initial data in $H_x^{1/2}$.
\end{itemize}
Each of the above two a priori assumptions seems very difficult to verify. 

In this paper, we improve the previous local results  for \eqref{eq:nls-3D}. Moreover with the radial data, we prove the global well-posedness as well as scattering, without imposing any a priori assumption or size restriction, where the scattering result holds in the energy space.

\subsection{Almost sure local well-posedness}
The first main result in this paper concerns the almost sure local well-posedness. Previously for the random data local result, B\'enyi, Oh, and Pocovnicu \cite{BOP19TranB} introduced the higher order expansion method; Pocovnicu and Wang's argument \cite{PW18lp} is based on the dispersive inequality; Dodson, L\"uhrmann, and Mendelson \cite{DLM19Adv} used the high dimensional version of smoothing effect and maximal function estimates. In this paper, we give some simple new approaches combining the atom space method by Koch-Tataru \cite{KT05CPAM} and the variants of bilinear Strichartz estimate.
\begin{thm}[Local well-posedness]\label{thm:local}
Let $f\in H_x^s(\R^3)$. Then, for almost every $\om\in \Om$, it holds that:
\enu{
\item 
If $\frac16\loe s<\frac12$, then there exists $T>0$ and a solution $u$ of \eqref{eq:nls-3D} on $[0,T]$ such that
\EQ{
	u-e^{it\De}f^\om\in C([0,T];H_x^{\frac12}(\R^3)).
}
\item 
If $\frac13< s<\frac12$, then there exists $T>0$ and a solution $u$ of \eqref{eq:nls-3D} on $[0,T]$ such that
\EQ{
	u-e^{it\De}f^\om\in C([0,T];H_x^{1}(\R^3)).
}
}
\end{thm}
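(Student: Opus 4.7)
The plan is to set $v := u - e^{it\De}f^\om$ and to solve the Duhamel equation
\EQ{
v(t) = -i\int_0^t e^{i(t-\tau)\De}\brk{|z+v|^2(z+v)}(\tau)\,d\tau, \qquad z := e^{it\De}f^\om
}
by a Picard iteration for $v$ in an atomic $U^2_\De$-based solution space at regularity $H_x^{1/2}$ for part (i), and at $H_x^1$ for part (ii). Expanding $|z+v|^2(z+v)$ gives eight cubic terms, which I would group by the number of copies of $z$. The purely deterministic term $|v|^2 v$ is handled by the sub-critical $H_x^{1/2}$ theory on a small time slab once $v$ is small in the iteration norm, and the mixed terms interpolate between this and the purely random term $|z|^2 z$, which drives the restriction on $s$.

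The probabilistic input is a family of almost-sure improvements on Strichartz estimates for $z$, obtained from Khintchine's inequality together with the unit-cube decomposition $\bx_k$. Concretely, for $f\in H_x^s$ and almost every $\om\in\Om$, I would rely on bounds of the schematic form
\EQ{
\normb{\jb{\nabla}^{s+\al} e^{it\De}f^\om}_{L_t^q L_x^r([0,1]\times\R^3)} \lsm_\om \|f\|_{H_x^s}
}
for various sub-admissible pairs $(q,r)$ and small $\al > 0$; these amount to trading the $L_x^2$-Strichartz admissibility for integrability at the unit frequency scale dictated by $\bx_k$, and play the role of Bernstein's inequality for the randomized flow.

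The deterministic machinery is the $U^2_\De$--$V^2_\De$ duality of Koch-Tataru, which reduces each trilinear Duhamel bound in the dual space $N^\si$ (with $\si = \tfrac12$ or $1$) to a multilinear Strichartz-type estimate on linear flows after a Littlewood-Paley decomposition. The key geometric input is the bilinear Strichartz estimate, which yields a gain of $(N_{\mathrm{low}}/N_{\mathrm{high}})^{1/2}$ whenever two transversely frequency-localized factors are multiplied; I would also use its random-data variants, which exploit the independence of the $g_k$ to produce an additional gain whenever one of the two factors is $z$.

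The main obstacle is closing the trilinear estimate in $N^\si$ for the purely random term $|z|^2 z$. After Littlewood-Paley decomposition and placing the $\si$ derivatives on the highest-frequency copy, two applications of (random-data) bilinear Strichartz transfer the derivative loss onto the two lower-frequency factors; a careful balancing of the exponents then produces the thresholds $s \goe \tfrac16$ for $\si = \tfrac12$ and $s > \tfrac13$ for $\si = 1$, with the strict inequality in the second case reflecting a borderline divergence in the dyadic sum. Once the contraction closes on a random time $T(\om) > 0$, a standard measurability and Borel-Cantelli argument promotes the existence of a nonlinear solution to a full-measure event in $\Om$.
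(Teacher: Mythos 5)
Your proposal follows the same overall strategy as the paper: write $v = u - e^{it\De}f^\om$ (the paper's $w$), solve the Duhamel equation by a fixed-point argument in $U^2_\De$-based spaces at regularity $H^{1/2}_x$ resp.\ $H^1_x$, feed in almost-sure refined Strichartz estimates for the linear flow $z = e^{it\De}f^\om$ coming from the unit-cube decomposition and large deviation bounds, and redistribute derivatives using $U^2_\De$--$V^2_\De$ duality together with bilinear Strichartz. Two details depart from the paper and deserve a correction. First, the paper uses only the \emph{deterministic} multi-scale bilinear Strichartz estimate (Lemma~\ref{lem:bilinearstrichartz}, following Candy), not a random-data variant; the probabilistic gain enters exclusively through the unilinear almost-sure bounds of Lemma~\ref{lem:almostsure-strichartz}, most notably $\norm{P_N z}_{L_t^\I L_x^2}\lsm N^{-s}$ and $\norm{P_N z}_{L_t^\I L_x^\I}\lsm N^{-s+}$ packaged into the $Z^s$-norm. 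This is not cosmetic: the transference form of the bilinear estimate must also be applied to the dual test function $g$ and to the nonlinear component $w$, neither of which is a randomized linear flow, so a purely random bilinear estimate would not do the job. Second, the two parts are treated by genuinely different trilinear arguments in the paper. For $\si=\frac12$ the duality is used only trivially, via $\norm{g}_{L_t^\I L_x^2}\lsm\norm{g}_{V^2_\De}$, and one bilinear Strichartz per dyadic block closes the estimate with $s\goe\frac16$ including the endpoint. For $\si=1$ the duality is exploited in an essential way: bilinear Strichartz is applied twice, once to $g_N u_{N_1}$ (pairing the test function with a low factor) and once to $\nabla z_N u_{N_2}$, and in the first of these $g_N$ must be interpolated from $U^2_\De$ into $V^2_\De$ via Lemma~\ref{lem:upvp-interpolation}. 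That interpolation carries a logarithmic loss, and it is exactly this loss—not a generic borderline dyadic divergence—that forces the strict inequality $s>\frac13$ in part (2). Your derivative count $3\times\frac16$ resp.\ $3\times\frac13$ correctly locates the thresholds, but it does not by itself explain why one endpoint is attained and the other is excluded.
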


Our result  improves the local results in \cite{BOP19TranB}, where B\'enyi, Oh, and Pocovnicu \cite{BOP19TranB} proved the same results in Theorem \ref{thm:local} $(1)$ for $\frac15<s<\frac12$ and Theorem \ref{thm:local} $(2)$ for $\frac25<s<\frac12$.

The following are some remarks concerning the theorem. 
\begin{remark}
\enu{
\item
For the first result in Theorem \ref{thm:local}, the lower exponent $\frac16$ is optimal for the non-linear estimate of 
\EQ{
\int_0^t e^{i(t-s)\De}(|e^{is\De}f^\om|^2e^{is\De}f^\om) \ds
}
with $f\in H_x^{1/6}(\R^3)$. In fact, this optimality can be verified by taking the Knapp counter-example $f$ such that $\wh f(\xi)=\jb{k}^{-s}\psi(\xi-k)$.
\item 
It seems very difficult to extend the local solution obtained in Theorem \ref{thm:local} $(1)$ to global directly. Therefore, we establish the local solution with higher regularity in Theorem \ref{thm:local} $(2)$. 
\item Our second result can be compared to Dodson, L\"uhrmann, and Mendelson's local result \cite{DLM19Adv} for 4D, cubic NLS, which is energy critical, since both results put the Duhamel term in $C([0,T];H_x^1(\R^4))$. They proved local well-posedness for $\frac13<s<1$, also except the endpoint exponent $\frac13$.
\item 
Apparently, the local results in Theorem \ref{thm:local} also hold for focusing equations.
}
\end{remark}
The proof of Theorem \ref{thm:local} $(2)$ is more difficult than the first local result. We postponed here to illustrate the main idea. It reduces to consider the term
\EQ{
	\nabla\brkb{|e^{it\De}f^\om|^2 e^{it\De}f^\om}
}
with $f$ merely in $H_x^{\frac13}$. The task is how to allocate the first order derivative to each $e^{it\De}f^\om$. However, the use of bilinear Strichartz estimate or local smoothing can only lower down semi-derivative. Then, we overcome the difficulty by following two tools:
\begin{itemize}
\item
We employ  the $U^p$-$V^p$ method introduced by Koch-Tataru \cite{KT05CPAM} to exploit the duality structure.
\item 
We also apply the bilinear Strichartz estimate in the form of
\EQn{\label{eq:bilinear}
	\norm{[e^{it\De}\phi] [e^{\pm it\De}\psi]}_{L_t^q L_x^r(\R\times\R^3)}
}
with $1\loe q,r\loe 2$, see Candy's result \cite{Can19MathAnn}. Particularly, the use of  \eqref{eq:bilinear} with $q<2$ and $r=2$ can reduce the loss of derivative, at the cost of lower time integration exponents.
\end{itemize} 
Let $v=e^{it\De}f^\om$, and it suffices to control
\EQ{
\int_0^T\int_{\R^3} g_{\rm hi} \nabla v_{\rm hi} v_{\rm low}^2\dx\dd t,
}
where $e^{-it\De}g\in V^2(\R;L_x^2)$, and ``hi'', ``low'' represent the size of frequency. Heuristically, by H\"older's inequality,
\EQ{
	\int_0^T\int_{\R^3} g_{\rm hi} \nabla v_{\rm hi} v_{\rm low}^2\dx\dd t\lsm \norm{g_{\rm hi} v_{\rm low}}_{L_t^{\frac32}L_x^2} \norm{\nabla v_{\rm hi}v_{\rm low}}_{L_t^1 L_x^2}^{\frac13} \norm{\nabla v_{\rm hi}}_{L_{t,x}^\I}^{\frac23} \norm{v_{\rm low}}_{L_t^\I L_x^2}^{\frac23}.
}
This can cut down the order of high-frequency derivative to $\frac13$ for $v_{\rm hi}$, with a total loss of low-frequency derivative of order $\frac23$, which can be assigned to each $v_{\rm low}$. 

Note that the above observation is sharp with respect to the regularity. Furthermore, we have a logarithmic loss of derivative when passing $g$ into $V_\De^2$ by interpolation. That is the main reason why we need $s>\frac13$ to acquire additional regularity for summation. 

Moreover, if we only requires 
\EQ{
	u-e^{it\De}f^\om\in C([0,T];H_x^{\si}(\R^3))
}
for $\frac12\loe\si<1$, the approach in above observation can provide enough additional regularity for summation. Thus, we expect that the argument works for the optimal lower endpoint, namely $\frac13\si\loe s<\frac12$, which clearly includes the result in Theorem \ref{thm:local} $(1)$. However, in this paper, we only consider two endpoint cases when $\si=\frac12$ or $\si=1$, and present two different methods, respectively. For Theorem \ref{thm:local} $(1)$, we provide another proof without exploiting the duality structure. In fact, there is only $\frac12$-order derivative acting on the nonlinear term, and we can transfer it simply using the bilinear Strichartz estimate.

Next, we also compare our local result to the torus case, where there exists the invariant Gibbs measure for NLS, see \cite{LRS88JSP,Bou94CMP,Bou96CMP,DNY19gibbs}. The common key ingredient is to update the regularity of the first iteration
\EQ{
u^{(1)}:=\int_0^t e^{i(t-s)\De}(|e^{is\De}f^\om|^2e^{is\De}f^\om) \ds
}
for rough data $f^\om\in H_x^s$. Theorem \ref{thm:local} indicates that in the Euclidean case, the first iteration $u^{(1)}$ has improved regularity $\si=3s$ when $s=\frac16$, and $\si=3s-$ when $s=\frac13$. However, in the periodic case when the initial data is given by
\EQ{
f^{\om}(x) = \sum_{n\in\Z^d}\frac{g_n(\om)}{\jb{n}^{s+d/2}} e^{in\cdot x},
}
the first iteration has improved regularity $\si=s+\frac12-$, which is higher than the Euclidean case when $s<\frac14$, and lower when $s\goe\frac14$. Inspired by this, the result in Theorem \ref{thm:local} $(2)$ seems better than the observation in periodic case, and there seems still some room to further improve Theorem \ref{thm:local} $(1)$. 

\subsection{Almost sure scattering}
Now we turn to our second main result for the global well-posedness and scattering: 
\begin{thm}[Global well-posedness and scattering]\label{thm:global}
Let $\frac{17}{40}< s\loe\frac12$ and $f\in H_x^s(\R^3)$ be radial. Then, for almost every $\om\in \Om$, there exists a solution $u$ of \eqref{eq:nls-3D} on $\R$ such that
\EQ{
	u-e^{it\De}f^\om\in C(\R;H_x^{1}(\R^3)). 
}
Moreover, the solution $u$ scatters, in the sense that there exist $u_\pm\in H_x^1$ such that
\EQ{
\lim_{t\ra\pm\I}\norm{u-e^{it\De}f^\om-e^{it\De}u_\pm}_{H_x^1} =0.
}
\end{thm}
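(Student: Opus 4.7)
The plan is to decompose $u = v + w$ where $v = e^{it\De}f^\om$ is the free evolution of the randomized data and $w$ is the Duhamel remainder, which solves
\EQ{
i\pd_t w + \De w = |v+w|^2(v+w) = |w|^2 w + F(v,w), \quad w(0) = 0,
}
where $F(v,w)$ collects all monomials containing at least one factor of $v$. Almost sure local existence of $w \in C([0,T_0];H_x^1)$ is furnished directly by Theorem \ref{thm:local} $(2)$, whose hypothesis $s > \frac13$ is implied by the assumption $s > \frac37$. The task is to promote this local solution to a global one with a uniform $H_x^1$ bound and to prove scattering in $H_x^1$.

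The engine of the argument is a double bootstrap that simultaneously controls the uniform energy
\EQ{
E(w(t)) = \half 1\norm{\nabla w(t)}_{L_x^2}^2 + \qua 1\norm{w(t)}_{L_x^4}^4 \loe C,
}
together with a radial interaction/Morawetz spacetime quantity of the type $\int_I\int_{\R^3} \frac{|w|^4}{|x|}\dx\dt$ that provides a priori $L_{t,x}^4$-type dispersion for $w$ on $I$. The Morawetz estimate is available because the radial hypothesis on $f$ transfers to $v$; applied to the forced equation, it bounds the Morawetz quantity for $w$ in terms of its energy plus source contributions involving $v$. As a preliminary ingredient I would establish the almost sure improved Strichartz and local smoothing estimates for $v$ stemming from the unit-frequency decomposition in \eqref{eq:randomization} and Khintchine-type inequalities, together with their radial refinements.

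The main obstacle is the energy-increment estimate
\EQ{
E(w(t)) - E(w(0)) = -\re\int_0^t\int_{\R^3}\pd_s\wb w\cdot F(v,w)\dx\ds,
}
which, after substituting the equation for $\pd_s\wb w$ and integrating by parts in space, generates terms in which the first-order derivative $\nabla$ falls on the linear flow $v$. Since $v$ carries strictly less than $\half 1$ derivatives of regularity, these terms cannot be absorbed by naive Strichartz estimates and constitute the sharpest technical point of the paper. To handle them I follow the two-ingredient strategy motivated by the discussion after Theorem \ref{thm:local} $(2)$: use the $U^p$-$V^p$ atomic/duality framework of Koch-Tataru to place the derivative on the factor with the best integrability, at the cost of only a logarithmic loss when moving to $V_\De^2$, and apply the Candy-type bilinear Strichartz estimates \eqref{eq:bilinear} with $q<2$ to convert surplus time-integrability into a fractional reduction in the high-frequency derivative loss. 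The probabilistic regularity gain on $v$, the radial Morawetz control of $w$, and these bilinear/atomic tools combine to close the bootstrap; the threshold $\frac37$ should emerge as the exponent at which the resulting frequency-summation in the critical energy-increment terms becomes uniform across dyadic scales.

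Once the double bootstrap closes globally, yielding a uniform $H_x^1$ bound on $w$ together with global spacetime Strichartz control, the scattering statement follows by the usual energy-subcritical argument: the Strichartz control guarantees that $e^{-it\De}w(t)$ is Cauchy in $H_x^1$ as $t \to \pm \I$, producing $u_\pm \in H_x^1$ with $\lim_{t\to\pm\I}\norm{u - e^{it\De}f^\om - e^{it\De}u_\pm}_{H_x^1} = 0$.
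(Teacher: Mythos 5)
Your high-level instinct---split $u=v+w$, control a Morawetz-type quantity and the energy simultaneously, and use bilinear Strichartz together with the $U^p$-$V^p$ framework to handle the $\nabla v$ terms---is in the right spirit, and the tools you name (bilinear estimates \`a la Candy, the atomic/duality framework, radial probabilistic Strichartz refinements) do all appear in the paper. However, there are two structural choices in your outline that depart from what the paper does, and both are genuine gaps rather than merely alternative routes.

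First, you take $v=e^{it\De}f^\om$ and $w(0)=0$. The paper instead fixes a dyadic $N_0=N_0(A)$ and sets $v=e^{it\De}P_{\geq N_0}f^\om$, $w_0=P_{\leq N_0}f^\om$. This high-low frequency decomposition is not cosmetic: it forces $\widehat v$ to be supported on $\{|\xi|\gtrsim N_0\}$, so that every probabilistic bound on $v$ comes with a decaying factor $N_0^{-(\text{positive})}$ (see the estimate $\||\nabla|^l v_N\|_{l_N^2 L_t^2 L_x^\I}\lesssim N_0^{l-s-\frac12+}$), and it converts the energy estimate into a quantitative bootstrap $\sup_t E(w(t))\lesssim N_0^{2(1-s)}$ in which the error terms produced by $v$ carry enough negative powers of $N_0$ to be absorbed. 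With $w_0=0$ you have no large free parameter against which to trade; the increment terms contain factors like $\|v\|_{L_t^2L_x^\I}$ that are $O(1)$ rather than $O(N_0^{-s-\frac12+})$, and the bootstrap does not close. This is precisely the role the authors highlight for Bourgain's high-low mechanism in the random setting.

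Second, you propose controlling a radial Morawetz quantity of the form $\int_I\int \frac{|w|^4}{|x|}\,dx\,dt$ (describing it as a ``radial interaction/Morawetz'' object, but the expression you write is the classical radial Morawetz). The paper explicitly identifies this as insufficient for the 3D cubic problem: the weighted Morawetz does not directly yield the $\dot H^{1/2}$-critical spacetime bounds needed to seed the $X^l$-norm estimates, and the corresponding energy-increment bound $\|\nabla w\|_{L_t^\I L_x^2}\|\sqrt{|x|}\nabla v\|_{L_t^2 L_x^\I}(\iint\frac{|w|^4}{|x|})^{1/2}$ requires a full derivative on $v$, available only for $s>\frac12$. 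Instead, the paper uses the interaction Morawetz identity for the perturbed equation to get $\|w\|_{L_{t,x}^4}\lesssim N_0^{\frac{1-s}{4}}$, then bootstraps this through the $U^p$-$V^p$ framework to obtain $X^l$ bounds ($\dot H^{1/2}$-critical and then $\dot H^1$-critical), and only then combines those with bilinear Strichartz to lower the derivative loss below $\frac12$ in the energy increment. Your proposal skips the intermediate $X^l$-promotion step entirely; without it, the bilinear estimate on $\nabla w_N w_{N_1}$ (or $\nabla w_N v_{N_1}$) has no finite right-hand side to appeal to, and the threshold $\frac37$ does not emerge in the way you describe. In short: right tools, but the decomposition of the data and the choice of Morawetz estimate are both different from what the paper uses, and the paper's choices are load-bearing.
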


The most significant point of this result is that we are able to control the energy increment containing the term $\nabla e^{it\De}f^\om$, under the assumption that $f$ merely belongs to $H_x^s$ with some $s<\frac12$. 


Comparing to the energy-critical results in \cite{KMV19CPDE,DLM19Adv}, for 3D, defocusing, cubic NLS, it is easier to derive space-time estimates, since the Morawetz type estimates are energy sub-critical. On the other hand, however, this problem seems more difficult, in the sense that we need to reduce the order of derivative more than $\frac12$ for $\nabla e^{it\De}f^\om$, while the current results for energy-critical NLS lower down at most $\frac12$-order derivative, in the view of local smoothing effect.

Our method is different from the recent results on the almost sure scattering of nonlinear Schr\"odinger and wave equations \cite{DLM19Adv,DLM20AJM,KMV19CPDE,Bri18NLW,Bri20APDE}. To establish the almost energy conservation of $u-e^{it\De}f^\om$, we make a high-low frequency decomposition of the initial data, and keep track of the explicit increase of energy bound. Then, we implement a bootstrap argument for the energy, building upon a perturbed interaction Morawetz estimate, various nonlinear estimates and the bilinear Strichartz estimate.

After the submission of this paper, we also learned that Camps \cite{Cam21NLS} obtained an almost sure scattering result independently by a different approach.
 
Lastly, we remark that the lower bound of regularity $\frac{17}{40}$ is not sharp. Here, we do not achieve this optimality, and only give a well-presented result. However, it is of great interest to improve the regularity's lower bound down to $\frac13$, or even $\frac16$.


\subsubsection{Sketch the proof of Theorem \ref{thm:global}}

The main ingredient of the proof is summarized as follows. 


$\bullet$ {\bf High-low frequency decomposition in the probabilistic setting.}

In probabilistic setting, we only have the boundedness in the almost every sense. 
Roughly speaking, in order to quantify the size of energy, we decompose the probability space $\Omega$ by setting 
\EQ{
\wt \Omega_M=\big\{\omega\in\Om: \|f^\om \|_{H^s} + N_0^{s-1}\|P_{\loe N_0}f^\om \|_{H^1}+\|e^{it\Delta}f^\om\|_{\wt Y^s(\R)}\loe M\norm{f}_{H^s}\big\},
}
where the $\wt Y^s$-norm is some required space-time norm defined by \eqref{defn:ys-norm} and \eqref{defn:ys-tilde-norm} below, and $N_0\in 2^{\N}$ depends only on $M$ and $\norm{f}_{H^s}$. See \eqref{defn:omega-M} for the precise definition of $\wt\Om_M$. Then it follows from the Borel-Cantelli Lemma that 
$$
\PP\big(\cup_{M\goe 1}  \wt \Omega_M\big)=1.
$$

According to the decomposition above, we may consider $\omega\in \wt\Omega_M$ for each $M$ separately. Now, we give the high-low frequency decomposition $v=e^{it\De}P_{\goe N_0}f^\om$ and $w=u-v$. Then, for any $\omega \in\wt \Omega_M$, there exists a constant $C(M)>0$ such that 
\EQ{
E(w(0))\loe C(M,\norm{f}_{H^s})N_0^{2(1-s)}.
}

The application of Bourgain's high-low decomposition method \cite{Bou98IMRN} to random Cauchy problem was first made by Colliander and Oh \cite{CO12duke} for 1D NLS on $\T$. However, in this paper, we do not intend to carry out Bourgain's iteration procedure. We only make the decomposition in order for two benefits:
\begin{itemize}
\item[(1)] 
$\wh v$ is supported on $\fbrk{|\xi|\gsm N_0}$.
\item[(2)] 
We can explicitly keep track of the energy increment of $N_0$.
\end{itemize} 

$\bullet$ {\bf Strichartz estimates with $\frac12-$derivative gain.}

Note that $v$ is not radial anymore under the Wiener randomization. However, due to the pioneering works  \cite{DLM20AJM,DLM19Adv}, we can prove that  for the radial $f$, 
\EQn{\label{eq:intro-super-ctical}
\normb{\jb{\nabla}^{s+\frac12}v}_{L_t^2 L_x^\I}<\I,
}
which is followed by combining a ``radialish'' Sobolev inequality for the square function and the local smoothing estimate. Note that the estimate \eqref{eq:intro-super-ctical} gains $\frac12$-order derivative.

$\bullet$ {\bf Global  space-time bound for the nonlinear solution.}

From the perturbed interaction Morawetz estimates, we  can derive the bound of 
$$
\norm{w}_{L_{t,x}^4}\lsm N_0^{\frac14(1-s)},
$$
which is  $H^\frac14$-critical, under the a priori hypothesis of $H^1$-bound. The high-low frequency decomposition also plays a crucial role for controlling the remainder. Using this estimate, we can obtain the following $L^2$-critical estimate:
\EQ{
	\brkb{\sum_{N\in2^\N} \norm{P_N w}_{U_\De^2(L_x^2)}^2}^{1/2}\lsm N_0^{\frac56(1-s)+}.
}However, these are far from sufficient for the estimates of energy bound.

Then, an observation is that combining the above $L_{t,x}^4$-estimate and the integral equation, for any $L^2$-admissible $(q,r)$, we can further control $\dot H^\frac12$-critical space-time norm:
$$
\brkb{\sum_{N\in2^\N} N \norm{w_N}_{L_t^{q}L_x^{r}}^2}^{1/2}\lsm N_0^{\frac32(1-s)}.
$$
Keeping in mind that the equation is $\dot H^\frac12$-critical, the space-time estimate under the same scaling plays an important role throughout the whole argument. 

Furthermore, applying the above $\dot H^\frac12$-critical estimates, we can update the scaling up to $\dot H_x^1$:
$$
\brkb{\sum_{N\in2^\N} N^{2}\norm{P_N w}_{U_\De^2(L_x^2)}^2}^{1/2}\lsm N_0^{3(1-s)}.
$$
For this purpose, we also need to use  the maximal function techniques to deal with some critical cases. In the above argument, the use of $U_\De^2$-space has three advantages: we can transfer the derivative by duality formula, the $U_\De^2$-space can control the bilinear Strichartz estimate, and it allows estimates on any long-time interval.
%

$\bullet$ {\bf Energy bound.}

The main goal is to prove
\EQ{
	\sup_{t\in \R} E(w(t))\lsm_M N_0^{2(1-s)}.
}
It suffices to prove the bootstrap inequality
\EQ{
	\sup_{t\in I} \int_0^t \frac{\dd}{\dd s}E(w(s)) \dd s\lsm_M N_0^{-\al} N_0^{2(1-s)},
}
for some $\al>0$ under the assumption $\sup_{t\in I} E(w(t))\lsm_M N_0^{2(1-s)}$. Under this bootstrap hypothesis, we can also give the precise increase of $N_0$ for the various global space-time estimates on $I$ obtained in the previous step, which are very useful for the control of energy increment.
Now, the main term in the energy estimate is 
\EQn{\label{eq:intro-energy-increment}
\absb{\int_I \int_{\R^3} \nabla w_{\rm hi} \nabla v_{\rm hi} w_{\rm low}^2 \dx \dd t},
}
where ``hi'' and ``low'' represent the size of frequency. 

We remark that the Morawetz estimate of the form $\iint \frac{|w|^4}{|x|}\dx\dd t$ plays an important role in the former energy-critical results \cite{DLM19Adv,DLM20AJM,KMV19CPDE}, but it is not sufficient for the 3D cubic case. First, the Morawetz estimate cannot yield the global space-time bounds, as in the previous step. That is the reason why the use of interaction Morawetz estimate is necessary. Second, using their method, \eqref{eq:intro-energy-increment} can be controlled by 
$$
\norm{\nabla w_{\rm hi}}_{L_t^\I L_x^2} \normb{\sqrt{|x|}\nabla v_{\rm hi}}_{L_t^2 L_x^\I}\Big( \int_I \int_{\R^3} \frac{|w_{\rm low}|^4}{|x|}\,\dx\dd t\Big)^\frac12. 
$$
Then, the energy bound can be followed by the  ``radialish'' Sobolev inequality, local smoothing effect, and Morawetz estimate when $s>\frac12$.  Unfortunately, this argument does not work here, since we lack the $\nabla v$-estimates in the view of \eqref{eq:intro-super-ctical}, when $s<\frac12$ in our situation.

To overcome the difficulty, we observe that there is still some gap in the estimate
\EQ{
\eqref{eq:intro-energy-increment}\lsm \norm{\nabla w_{\rm hi}}_{L_t^\I L_x^2} \norm{\nabla v_{\rm hi}}_{L_t^2 L_x^\I} \norm{w_{\rm low}}_{L_{t,x}^4}^2,
} 
towards the desired bound $N_0^{2(1-s)}$. This gives us the room to use the global space-time estimates obtained above and the bilinear Strichartz estimate, which can further lower down the derivative for $\nabla v_{\rm hi}$.

\subsection{Organization of the paper}
In Section \ref{sec:pre}, we give some notation and useful results. In Section \ref{sec:str}, we prove the almost sure space-time estimates for the linear solution. Then, we prove the local results in Theorem \ref{thm:local} in Section \ref{sec:lwp}, and prove the global well-posedness and scattering results in Theorem \ref{thm:global} in Section \ref{sec:gwp}.

\vskip 1.5cm

\section{Preliminary}\label{sec:pre}

\vskip .5cm
\subsection{Notation}
For any $a\in\R$, $a\pm:=a\pm\epsilon$ for some small $\epsilon>0$. For any $z\in\C$, we define $\re z$ and $\im z$ as the real and imaginary part of $z$, respectively.

Let $C>0$ denote some constant, and write $C(a)>0$ for some constant depending on coefficient $a$. If $f\loe C g$, we write $f\lsm g$. If $f\loe C g$ and $g\loe C f$, we write $f\sim g$. Suppose further that $C=C(a)$ depends on $a$, then we write $f\lsm_a g$ and $f\sim_a g$, respectively. If $f\loe 2^{-5}g$, we denote $f\ll g$ or $g\gg f$.

Moreover, we write ``a.e. $\om\in\Om$'' to mean ``almost every $\om\in\Om$''.

We use $\wh f$ or $\F f$ to denote the Fourier transform of $f$:
\EQ{
\wh f(\xi)=\F f(\xi):= \int_{\R^d} e^{-ix\cdot\xi}f(x)\rm dx.
}
We also define
\EQ{
\F^{-1} g(x):= \rev{(2\pi)^d}\int_{\R^d} e^{ix\cdot\xi}g(\xi)\rm d\xi.
}
Using the Fourier transform, we can define the fractional derivative $\abs{\nabla} := \F^{-1}|\xi|\F $ and $\abs{\nabla}^s:=\F^{-1}|\xi|^s\F $.  

We next recall the unit-scale frequency decomposition in Definition \ref{defn:randomization}. Let $\wt \psi\in C_0^\I(\R^3)$ be a real-valued function such that $\wt \psi\goe 0$, $\wt \psi(-\xi)=\wt \psi(\xi)$ for all $\xi\in\R^3$ and 
\EQ{
	\wt \psi(\xi) =\left\{ \aligned
	&1\text{, when $\xi\in [-\frac12,\frac12]^3$,}\\
	&0\text{, when $\xi\notin [-1,1]^3$.}
	\endaligned
	\right.
}
Let $\psi\in C_0^\I(\R^d)$ be a real-valued function, satisfying that for all $\xi\in\R^d$, $0\loe \psi\loe 1$, $\supp \psi\subset[-1,1]^d$, $\psi(-\xi)=\psi(\xi)$, and $\sum_{k\in\Z^d}\psi(\xi-k)=1$. For any $k\in\Z^d$, define $\psi_k(\xi)=\psi(\xi-k)$. Then, we define 
\EQ{
	f_k=\bx_kf:=\F^{-1}\brkb{\psi_k\F f}.
}
We also introduce a fattening version of the unit-scale decomposition:
\EQ{
\wt \bx_kf:=\F^{-1}\brkb{\wt\psi(2^{-1}(\xi-k))\F f}.
}
Therefore, by $1\loe\sum_{k\in\Z^d}\wt\psi(2^{-1}(\xi-k))\loe C$, for all $\xi\in\R^d$, we have orthogonality
\EQ{
\normb{\wt \bx_k f}_{l_{k\in\Z^d}^2L_x^2}\sim \norm{f}_{L_x^2}.
}
Note also that on the support of $\psi_k(\xi)$, we have $\wt \psi(2^{-1}(\xi-k))=1$, which implies $\bx_k=\bx_k\wt \bx_k$.

We also need the usual inhomogeneous Littlewood-Paley decomposition for the dyadic number. Take a cut-off function $\phi\in C_{0}^{\infty}(\R)$ such that $\phi(r)=1$ if $0\loe r\loe1$ and $\phi(r)=0$ if $r>2$. 

Then, we introduce the spatial cut-off function. Denote $\chi_0(r)=\phi(r)$, and $\chi_j(r)=\phi(2^{-j}r)-\phi(2^{-j+1}r)$ for $j\in\N^+$. We also define a fattening version $\wt  \chi_j:=\phi(2^{-j-1}|\xi|)--\phi(2^{-j+2}r)$ with the property $\chi_j=\chi_j\wt \chi_j$.

Next, we give the definition of Littlewood-Paley dyadic projection operator. For dyadic $N\in 2^\N$, let $\phi_{N}(r) := \phi(N^{-1}r)$. Then, we define
\EQ{
\ph_1(r):=\phi(r)\text{, and }\ph_N(r):=\phi_N(r)-\phi_{N/2}(r)\text{, for any }N\goe 2.
}
We define the inhomogeneous Littlewood-Paley dyadic operator 
\EQ{
f_1=P_1f:=\mathcal{F}^{-1}\brko{ \ph_{1}(|\xi|) \hat{f}(\xi)},
}
and for any $N\goe 2$,
\EQ{
f_{N}= P_N f := \mathcal{F}^{-1}\brko{ \ph_N(|\xi|) \hat{f}(\xi)}.
}
Then, by definition, we have $f=\sum_{N\in2^\N}f_N$. Moreover, we also need the following: for any $N\in2^\N$,
\EQ{
f_{\loe N}=P_{\loe N} f := &\mathcal{F}^{-1}\brko{ \phi_{N}(|\xi|) \hat{f}(\xi)}, \\
f_{\ll N}=P_{\ll N} f :=& \mathcal{F}^{-1}\brko{\phi_{N}(2^{5}|\xi|) \hat{f}(\xi)}, \\
f_{\lsm N}=P_{\lsm N} f :=& \mathcal{F}^{-1}\brko{ \phi_{N}(2^{-5}|\xi|) \hat{f}(\xi)},
}
and
\EQ{
f_{\sim N}=P_{\sim N}f:= f_{\lsm N}-f_{\ll N}.
}
We also denote that $f_{\goe N}=P_{\goe N} f := f- P_{\loe N} f$, $f_{\gg N}=P_{\gg N} f:=f-P_{\lsm N}f$, and $f_{\gsm N}=P_{\gsm N}f:= f-P_{\ll N}f$.

Let $\Sch(\R^d)$ be the Schwartz space,  $\Sch'(\R^d)$ be the tempered distribution space, and $C_0^\I(\R^d)$ be the space of all the smooth compact-supported functions.

For Banach spaces $X$ and $Y$, we denote $X + Y$ as the sum space of $X$ and $Y$.

Given $1\loe p \loe \I$, $L^p(\R^d)$ denotes the usual Lebesgue space. For any $0\loe s< d/p$, we define the Sobolev space
\EQ{
	\dot W^{s,p}(\R^d) := \fbrkb{f\in\Sch'(\R^d): \norm{f}_{\dot{W}^{s,p}(\R^d)}:= \norm{\abs{\nabla}^{s}f}_{L^p(\R^d)}<+\I}.
}
We denote that $\dot{H}^s(\R^d):=\dot{W}^{s,2}(\R^d)$. The inhomogeneous spaces are defined by 
\EQ{
W^{s,p}(\R^d)=\dot W^{s,p} \cap L^p(\R^d)\text{, and }H^{s}(\R^d)=\dot H^{s} \cap L^2(\R^d).
}
We often use the abbreviations $H^s=H^s(\R^d)$ and $L^p=L^p(\R^d)$. We also define $\jb{\cdot,\cdot}$ as real $L^2$ inner product:
\EQ{
	\jb{f,g} = \re\int f(x)\wb{g}(x)\dx.
}

For any $1\loe p <\I$, define $l_N^p=l_{N\in 2^\N}^p$ by its norm
\EQ{
\norm{c_N}_{l_{N\in 2^\N}^p}^p:=\sum_{N\in 2^\N}|c_N|^p.
}
The space $l_k^p=l_{k\in\Z^d}^p$ is defined in a similar way. In this paper, we use the following abbreviations
\EQ{
\sum_{N:N\loe N_1}:=\sum_{N\in2^\N:N\loe N_1}\text{, and }\sum_{N\loe N_1}:=\sum_{N,N_1\in2^\N:N\loe N_1}.
}

We then define the mixed norms: for $1\loe q< \I$, $1\loe r\loe \I$, and the function $u(t,x)$, we define
\EQ{
\norm{u}_{L_t^q L_x^r(\R\times \R^d)}^q:= \int_{\R}\norm{u(t,\cdot)}_{L_x^r}^q\dd t,
}
and for the function $u_N(x)$, we define
\EQ{
\norm{u_N}_{l_N^q L_x^r(2^\N\times \R^d)}^q:= \sum_{N}\norm{u_N(\cdot)}_{L_x^r}^q.
}
The $q=\I$ case can be defined similarly.

For any $0\loe\gamma\loe1$, we call that the exponent pair $(q,r)\in\R^2$ is $\dot H^\ga$-$admissible$, if $\frac{2}{q}+\frac{d}{r}=\half d-\ga$, $2\loe q\loe\I$, $2\loe r\loe\I$, and $(q,r,d)\ne(2,\I,2)$. If $\ga=0$, we say that $(q,r)$ is $L^2$-$admissible$.

\subsection{Atom space and bounded variation space}
We recall the definitions of $U^p$ and $V^p$, and some properties used in this paper. The $U^p$-$V^p$ method was first introduced by Koch-Tataru \cite{KT05CPAM}, and we also refer the readers to \cite{HHK10Poinc,KTV14book,KT18Duke,CH18AnnPDE} for their complete theories. 

\begin{defn}
	Let $\mathcal{Z}$ be the set of finite partitions $-\I<t_0<t_1<...<t_K=\I$. 
	\enu{
		\item
		For $\fbrk{t_k}_{k=0}^K\in\ZZ$ and $\fbrk{\phi_k}_{k=0}^{K-1}\subset L_x^2$ with $\sum_{k=0}^{K-1} \norm{\phi_k}_{L_x^2}^p =1$, we call the function $a:\R\ra L_x^2$ given by $a=\sum_{k=1}^K \mathbbm{1}_{[t_{k-1},t_k)}\phi_{k-1}$ a $U^p$-$atom$. Furthermore, we define the atomic space
		\EQn{
			U^p(\R;L^2):=\fbrkb{u=\sum_{j=1}^\I \la_ja_j: a_j\ U^p\text{-atom, }\la_j\in\C\text{ with }\sum_{j=1}^\I\abs{\la_j}<\I},
		}
		with norm
		\EQn{\label{eq:upnorm}
			\norm{u}_{U^p(\R;L^2)}:=\inf\fbrkb{\sum_{j=1}^\I\abs{\la_j}:u=\sum_{j=1}^\I \la_ja_j,a_j\  U^p\text{-atom, }\la_j\in\C}.
		}
		\item
		We define $V^p(\R;L^2)$ as the normed space of all functions $v:\R\ra L^2$ such that 
		\EQn{\label{eq:vpnorm}
			\norm{v}_{V^p(\R;L_x^2)}:=\sup_{\fbrk{t_k}_{k=0}^K\in\ZZ}\brkb{\sum_{k=1}^K\norm{v(t_k)-v(t_{k-1})}_{L_x^2}^p}^{1/p}
		}
		is finite, where we use the convention $v(t_K)=v(\I)=0$.  $V_{rc}^p$ denotes the closed subspace of all right-continuous $V^p$ functions with $\lim_{t\ra -\I}v(t)=0$.
		\item
		We define $U_\De^2(\R; L_x^2)$ as the adapted normed space:
		\EQ{
			U_\De^2(\R; L_x^2):=\fbrkb{u:\norm{u}_{U_\De^2(\R; L_x^2)}:=\norm{e^{-it\De}u}_{U^2(\R;L_x^2)}<\I}.
		}
		Similarly, $V_\De^2(\R; L_x^2)$ denotes the adapted normed space
		\EQ{
			V_\De^2(\R; L_x^2):=\fbrkb{u:\norm{u}_{V_\De^2(\R; L_x^2)}:=\norm{e^{-it\De}u}_{V^2(\R;L_x^2)}<\I, e^{-it\De}u\in V_{rc}^2}.
		}
	}
\end{defn}

In this paper, we will use restriction spaces to some interval $I\subset \R$: $U^p(I;L_x^2)$, $V^p(I;L_x^2)$, $U_\De^p(I;L_x^2)$, and $V_\De^p(I;L_x^2)$. See Remark 2.23 in \cite{HHK10Poinc} for more details.

Note that for $1\loe p<q<\I$, the embeddings
\EQ{
U^p(\R;L_x^2) \hra L_t^\I(\R;L_x^2)\text{, }V^2(\R;L_x^2) \hra L_t^\I(\R;L_x^2),
}
and $U^p\hra V_{rc}^p \hra U^q$ are continuous.

We need the following classical linear estimate and duality formula:
\begin{lem}[\cite{HHK10Poinc}]\label{lem:upvpduality}
	Let $I$ be an interval such that $0=\inf I$. Then, for any $f\in L_x^2$,
	\EQ{
	\norm{e^{it\De}f}_{U_\De^2(I;L_x^2)} \lsm \norm{f}_{L_x^2},
} 
and for $F(t,x)\in L_t^1L_x^2(I\times\R^d)$,
	\EQ{
		\normb{\int_0^t e^{i(t-s)\De} F(s) \ds}_{U_\De^2(I;L_x^2)}
		= \sup_{\norm{g}_{V_\De^2(I;L_x^2)}=1} \abs{\int_{I}\int_{\R^3} F(t)\wb{g(t)}\dx\dd t}.
	}
\end{lem}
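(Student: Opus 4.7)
The plan is to derive both bounds from the $U^p$-$V^p$ theory as developed in \cite{HHK10Poinc}.

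For the linear estimate, I would simply exhibit $e^{it\De}f$ as a single scaled $U_\De^2$-atom. Since $e^{-it\De}(e^{it\De}f)=f$ is independent of $t$, the trivial partition $t_0=\inf I<t_1=\sup I$ paired with $\phi_0=f/\norm{f}_{L_x^2}$ produces a $U^2$-atom $a(t)=\mathbbm{1}_{[t_0,t_1)}(t)\phi_0$, and $e^{-it\De}(e^{it\De}f)=\norm{f}_{L_x^2}\cdot a$. By the definition of the $U^2$-norm as the infimum of atomic decompositions in \eqref{eq:upnorm}, this immediately gives $\norm{e^{it\De}f}_{U_\De^2(I;L_x^2)}\loe\norm{f}_{L_x^2}$.

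For the duality identity, the key input is the $U^2$-$V^2$ duality from \cite{HHK10Poinc}, which represents
\[
\norm{u}_{U^2(I;L_x^2)}=\sup_{\norm{v}_{V^2(I;L_x^2)}=1}|B(u,v)|,
\]
where $B$ is the natural Riemann--Stieltjes bilinear form on $U^2\times V^2$. I would apply this with $u(t):=\int_0^t e^{-is\De}F(s)\ds$; by the definition of the $U_\De^2$-norm, $\norm{u}_{U^2(I;L_x^2)}$ is exactly $\normb{\int_0^t e^{i(t-s)\De}F(s)\ds}_{U_\De^2(I;L_x^2)}$. Since $F\in L_t^1 L_x^2(I\times\R^3)$, this $u$ is absolutely continuous with classical derivative $u'(t)=e^{-it\De}F(t)$, and the Riemann--Stieltjes form then reduces via a discrete integration by parts (using the boundary convention $v(\sup I)=0$ built into $V^2$) to the ordinary pairing $\int_I\langle u'(t),v(t)\rangle_{L_x^2}\dt$. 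Setting $g(t):=e^{it\De}v(t)$ gives an isometric bijection between the unit balls of $V^2(I;L_x^2)$ and $V_\De^2(I;L_x^2)$, and converts $\int_I\langle e^{-it\De}F(t),v(t)\rangle\dt$ into $\int_I\int_{\R^3}F(t)\wb{g(t)}\dx\dt$, yielding the stated equality.

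The main subtlety will be justifying the integration-by-parts step, since $B(u,v)$ coincides with the ordinary integral pairing only when one factor is sufficiently regular. The cleanest route is to approximate $F$ in $L_t^1 L_x^2$ by finite step functions adapted to partitions of $I$; for such step-function approximants $F_n$ both sides of the identity are finite and reduce to the same telescoping sum by direct computation, and both sides depend continuously on $F$ -- the left-hand side by the continuity of the Duhamel map into $U_\De^2$ (which itself follows from the atomic definition), and the right-hand side via the embedding $V_\De^2\hookrightarrow L_t^\I L_x^2$ combined with H\"older's inequality. Passing to the limit yields the identity for general $F\in L_t^1 L_x^2$. Since both conclusions are essentially transcriptions of the corresponding propositions in \cite{HHK10Poinc}, no genuinely new difficulty arises.
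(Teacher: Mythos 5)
The paper does not prove this lemma; it simply cites it as known from Hadac--Herr--Koch \cite{HHK10Poinc} (the first part is the statement that constants lie in $U^2$ with the trivial atomic representation, and the duality identity is their Theorem 2.8 combined with Proposition 2.10). Your reconstruction follows precisely the route of that reference, and it is correct: the linear bound via the single-atom representation of a $t$-constant function, and the duality identity by applying the $U^2$--$V^2$ pairing $B(\cdot,\cdot)$ to the absolutely continuous function $u(t)=\int_0^t e^{-is\De}F(s)\,ds$, passing from $B(u,v)$ to $\int_I\langle u'(t),v(t)\rangle\,dt$ by density of step functions in $L_t^1L_x^2$, and then using unitarity of $e^{it\De}$ to convert $v\in V^2$ into $g=e^{it\De}v\in V_\De^2$ with the same norm. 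Two cosmetic points: the partition set $\mathcal{Z}$ requires $t_K=\infty$, so if $\sup I<\infty$ you should formally pad the atom with a final zero block $\mathbbm{1}_{[\sup I,\infty)}\cdot 0$ (and then take the restriction to $I$); and it is worth recording explicitly that the map $v\mapsto e^{it\De}v$ is an isometric bijection of $V^2(I;L_x^2)$ onto $V_\De^2(I;L_x^2)$ exactly because $e^{it\De}$ is a strongly continuous unitary group on $L_x^2$, which preserves both the partition differences in \eqref{eq:vpnorm} and right-continuity. With those remarks the argument is complete.
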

We also need the following interpolation result to transfer from $U_\De^2$ into $V_\De^2$.
\begin{lem}[\cite{HHK10Poinc}]\label{lem:upvp-interpolation}
	Let $q>1$, $E$ be a Banach space and $T:U_\De^q \ra E$ be a bounded, linear operator with $\norm{Tu}_E\loe C_q \norm{u}_{U_\De^q}$. In addition, assume that for some $1\loe p <q$, there exists $C_p\in(0,C_q]$ such that the estimate $\norm{Tu}_E\loe C_p \norm{u}_{U_\De^p}$ holds true for all $u\in U_\De^p$. Then, $T$ satisfies the estimate for $u\in V_{\De}^p$,
	\EQn{\label{esti:interpolation}
		\norm{Tu}_E\loe \frac{4}{(1-p/q)\ln 2} C_p \brkb{1+2(1-p/q)\ln 2 +\ln\frac{C_q}{C_p}}\norm{u}_{V_\De^p}.
	}
\end{lem}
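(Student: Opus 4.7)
The plan is to prove \eqref{esti:interpolation} via a dyadic greedy approximation of $v\in V_\De^p$ by simple step functions, combined with atomic decompositions evaluated simultaneously in $U_\De^p$ and $U_\De^q$; the logarithmic factor $\ln(C_q/C_p)$ will arise naturally from optimizing the trade-off between the two bounds. By a density argument I would first restrict to the case where $e^{-it\De}v(t)$ is a step function with finitely many discontinuities and $\norm{v}_{V_\De^p}\loe 1$.

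For each integer $M\goe 0$, I would construct a greedy step-function approximation $v_M$ of $v$ as follows. Set $\tau_0^M=-\I$, and inductively let $\tau_{j+1}^M$ be the first time after $\tau_j^M$ at which $\normb{e^{-it\De}v(t)-e^{-i\tau_j^M\De}v(\tau_j^M)}_{L_x^2}\goe 2^{-M/p}$. Define $v_M(t):=v(\tau_j^M)$ on $[\tau_j^M,\tau_{j+1}^M)$. The assumption $\norm{v}_{V_\De^p}\loe 1$ and the jump threshold force the total number of breakpoints of $v_M$ to be at most $2^M$, and the construction guarantees $\norm{v-v_M}_{L_t^\I L_x^2}\loe 2^{-M/p}$.

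Set $w_M:=v_M-v_{M-1}$ with $v_{-1}\equiv 0$; each $w_M$ is a step function with $O(2^M)$ pieces, each of $L_x^2$-size $\lsm 2^{-M/p}$. Writing $w_M$ as a single $U_\De^p$-atom (resp.\ $U_\De^q$-atom) via \eqref{eq:upnorm} yields the complementary estimates
\EQ{
\norm{w_M}_{U_\De^p}\lsm 1\qquad\text{and}\qquad \norm{w_M}_{U_\De^q}\lsm 2^{M/q}\cdot 2^{-M/p}=2^{-M(1/p-1/q)}.
}
Telescoping $v=\sum_{M\goe 0}w_M$ (convergent in $V_\De^p$ thanks to the $L_t^\I L_x^2$-bound above) and applying the two hypotheses on $T$ piece by piece gives
\EQ{
\norm{Tw_M}_E\loe \min\bigl(C_p\norm{w_M}_{U_\De^p},\,C_q\norm{w_M}_{U_\De^q}\bigr)\lsm \min\brkb{C_p,\,C_q\cdot 2^{-M(1/p-1/q)}}.
}
The cross-over between the two bounds occurs at $M^*\sim (1/p-1/q)^{-1}\log_2(C_q/C_p)$; summing the $M\loe M^*$ terms contributes $\approx C_p\cdot M^*$, which accounts for the factor $\ln(C_q/C_p)/(1-p/q)$, while the $M>M^*$ terms form a geometric sum bounded by $\lsm C_p/(1-p/q)$. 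Adding the two pieces produces the claimed estimate \eqref{esti:interpolation}, including the explicit prefactor $4/((1-p/q)\ln 2)$ after tracking constants carefully.

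The main technical hurdle will be securing the sharp scaling $\norm{w_M}_{U_\De^q}\lsm 2^{-M(1/p-1/q)}$. This requires the breakpoints of $v_{M-1}$ to be contained in those of $v_M$, so that $w_M$ really has only $O(2^M)$ pieces rather than $O(2^M)+O(2^{M-1})$ pieces of possibly incompatible sizes; this can be arranged by constructing the greedy partitions compatibly across scales (for instance, refining the partition at level $M-1$ to obtain that at level $M$). A secondary subtlety is justifying the telescoping sum at the level of $V_\De^p$-norms, which I would handle by combining the $L_t^\I L_x^2$-approximation with the right-continuity of $V_{rc}^p$-functions and a limiting argument from the reduction step.
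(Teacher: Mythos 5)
The paper does not prove Lemma~\ref{lem:upvp-interpolation} itself; it imports it from Hadac--Herr--Koch \cite{HHK10Poinc}, and your argument reproduces that reference's proof in outline: a greedy right-continuous step-function approximation of $v$ at dyadic thresholds with nested partitions across scales, a telescoping decomposition $v=\sum_M w_M$ in which each $w_M$ is simultaneously a (scaled) $U^p$- and a $U^q$-atom, and a split of $\sum_M\|Tw_M\|_E$ at the cross-over scale where $C_p$ and $C_q\|w_M\|_{U_\De^q}$ balance. The one point that would not survive careful bookkeeping is the claim that your parametrization yields the stated prefactor $4/((1-p/q)\ln 2)$. With the threshold $2^{-M/p}$ the number of breakpoints is $\lesssim 2^{M}$ and $\|w_M\|_{U_\De^q}\lesssim 2^{-M(1/p-1/q)}$, so the cross-over sits at $M^{*}\sim \log_2(C_q/C_p)/(1/p-1/q)=p\,\log_2(C_q/C_p)/(1-p/q)$; both the flat block $M\le M^{*}$ (contributing $\sim C_p M^{*}$) and the geometric tail (contributing $\sim C_p/(2^{1/p-1/q}-1)$) then carry an extra factor of $p$ relative to \eqref{esti:interpolation}. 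To recover the sharp scaling one should take the threshold $2^{-n}$ as in \cite{HHK10Poinc}, giving $\lesssim 2^{np}$ breakpoints at level $n$ and $\|w_n\|_{U_\De^q}\lesssim 2^{-n(1-p/q)}$, so that the cross-over lies at $n^{*}\sim\log_2(C_q/C_p)/(1-p/q)$ and the two pieces combine to the stated constant. You are right that refining the level-$(n-1)$ partition to obtain the level-$n$ partition (so that the $w_M$ have controlled piece counts) and the right-continuity with $v(-\infty)=0$ built into $V_\De^p$ are the crucial structural points; these are handled in \cite{HHK10Poinc} exactly along the lines you sketch.
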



\subsection{Useful lemmas}
In this subsection, we gather some useful results.
\begin{lem}[Schur's test]\label{lem:schurtest}
	For any $a>0$, let sequences $\fbrk{a_N}$,  $\fbrk{b_N}\in l_{N\in2^\N}^2$, then we have
	\EQn{\label{eq:schurtest}
		\sum_{N_1\loe N} \brkb{\frac{N_1}{N}}^a a_N b_{N_1} \lsm \norm{a_N}_{l_N^2} \norm{b_N}_{l_N^2}.
	}
\end{lem}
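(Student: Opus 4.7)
The plan is to prove this as a standard weighted Schur-type estimate, exploiting the fact that the kernel $K(N, N_1) := (N_1/N)^a \mathbbm{1}_{\{N_1 \leq N\}}$ satisfies the usual row/column sum bounds with constants depending only on $a > 0$.

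The first step is to split the weight symmetrically as $(N_1/N)^a = (N_1/N)^{a/2} \cdot (N_1/N)^{a/2}$ and apply the Cauchy--Schwarz inequality in the double sum:
\EQ{
\sum_{N_1 \loe N} \brkb{\frac{N_1}{N}}^a a_N b_{N_1}
\loe \brkb{\sum_{N_1 \loe N} \brkb{\frac{N_1}{N}}^a |a_N|^2}^{1/2} \brkb{\sum_{N_1 \loe N} \brkb{\frac{N_1}{N}}^a |b_{N_1}|^2}^{1/2}.
}
Next, I would swap the order of summation in each of the two resulting factors. For the first factor, fixing $N$ and summing over dyadic $N_1 \loe N$ gives a geometric series
\EQ{
\sum_{N_1 \in 2^\N: N_1 \loe N} \brkb{\frac{N_1}{N}}^a = \sum_{k\goe 0} 2^{-ak} = \frac{1}{1-2^{-a}} \lsm_a 1,
}
so the first factor is bounded by a constant times $\norm{a_N}_{l_N^2}$. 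Symmetrically, for the second factor, fixing $N_1$ and summing over dyadic $N \goe N_1$ yields the same geometric series, giving a bound of $\norm{b_N}_{l_N^2}$.

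Combining the two estimates gives the claim. There is no real obstacle here, the only point to note is that the implicit constant depends on $a > 0$ through the factor $(1-2^{-a})^{-1}$, which is finite precisely because we assume $a > 0$; the inequality degenerates as $a \ra 0^+$, but that regime is not used in the paper.
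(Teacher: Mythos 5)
Your proof is correct and is the standard argument for Schur's test via the symmetric splitting of the weight and Cauchy--Schwarz in the double sum; the paper states this lemma without proof, and your argument supplies exactly the expected one. The geometric-series bound $\sum_{k\geq 0}2^{-ak}=(1-2^{-a})^{-1}$ is applied correctly in both factors (once fixing $N$ and summing over $N_1\leq N$, once fixing $N_1$ and summing over $N\geq N_1$), and your remark about the constant degenerating as $a\to 0^+$ is accurate and harmless for the paper's usage.
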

\begin{lem}[Hardy's inequality]\label{lem:hardy}
	For $1<p<d$, we have that
	\EQ{
		\normb{|x|^{-1}u}_{L_x^p(\R^d)}\lsm \norm{\nabla u}_{L_x^p(\R^d)}.
	}
\end{lem}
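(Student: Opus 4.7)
The plan is to use the classical integration-by-parts argument based on the pointwise identity
$$
\operatorname{div}\!\left(\frac{x}{|x|^p}\right) = \frac{d-p}{|x|^p}, \qquad x\ne 0,
$$
which is straightforward to verify. By density, I would first reduce to the case $u\in C_c^\infty(\R^d\setminus\{0\})$: since $p<d$, a single point has zero $W^{1,p}$-capacity, so any $u\in\dot W^{1,p}$ can be approximated in $\dot W^{1,p}$-norm by smooth functions vanishing near the origin (and near infinity) via a standard mollification combined with a radial cut-off of the form $1-\varphi(|x|/\varepsilon)$, with the error controlled using $p<d$.

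For such $u$, multiply the divergence identity by $|u|^p$, integrate (no boundary terms, since $u$ vanishes near $0$ and $\infty$), and use $\nabla(|u|^p)=p|u|^{p-2}\operatorname{Re}(\bar u\,\nabla u)$ to obtain
$$
(d-p)\int_{\R^d}\frac{|u|^p}{|x|^p}\,dx = -\int_{\R^d}\frac{x}{|x|^p}\cdot \nabla(|u|^p)\,dx \leq p\int_{\R^d}\frac{|u|^{p-1}|\nabla u|}{|x|^{p-1}}\,dx.
$$
Applying Hölder's inequality with conjugate exponents $p/(p-1)$ and $p$ to the right-hand side gives
$$
(d-p)\int_{\R^d}\frac{|u|^p}{|x|^p}\,dx \leq p\left(\int_{\R^d}\frac{|u|^p}{|x|^p}\,dx\right)^{\!\frac{p-1}{p}}\|\nabla u\|_{L^p}.
$$
Dividing through yields the result with constant $p/(d-p)$, where the division is legitimate because one can first assume (by truncating $u$) that the weighted norm on the left is finite, and $p>1$ makes the exponent $(p-1)/p$ strictly positive so that the a priori bound propagates through the limiting procedure.

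The only genuinely delicate point is the density step, where both assumptions $p>1$ and $p<d$ are used: $p<d$ to approximate by functions avoiding the origin, and $p>1$ both for the applicability of Hölder and to allow the absorption argument. Everything else is routine calculus once the divergence identity is in hand.
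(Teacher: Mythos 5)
Your proof is correct and is the classical divergence-identity/integration-by-parts proof of Hardy's inequality. The paper itself states this lemma in its ``Useful lemmas'' preliminaries \emph{without} proof, treating it as a standard fact, so there is no in-paper argument to compare against; your write-up supplies a self-contained derivation and even records the sharp constant $p/(d-p)$. The only small imprecision is the remark that one must ``truncate $u$'' to make the weighted integral finite before dividing: once you have reduced to $u\in C_c^\infty(\R^d\setminus\{0\})$, the quantity $\int_{\R^d}|u|^p|x|^{-p}\dx$ is automatically finite (the integrand is bounded with compact support), so the absorption step needs no extra truncation---one only needs to dispose of the trivial case where the integral vanishes. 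The roles of the hypotheses are exactly as you describe: $p<d$ makes both the divergence identity produce a positive constant and the point at the origin removable in the density argument, while $p>1$ is what lets H\"older's inequality with exponents $p$ and $p/(p-1)$ absorb the weighted norm.
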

\begin{lem}[Local smoothing, \cite{CS88JAMS,GV92CMP,KPV91Indiana}]\label{lem:local-smoothing}
	We have that 
	\EQ{
		\sup_{R>0}R^{-\frac12}\norm{e^{it\De}f}_{L_t^2(\R;L_{|x|\loe R}^2)}\lsm \normb{|\nabla|^{-\frac12}f}_{L_x^2}.
	}
\end{lem}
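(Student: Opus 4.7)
My plan is to prove the local smoothing estimate by a direct Fourier-analytic computation, reducing it to the classical $L^2$ extension bound for the sphere (the Agmon--H\"ormander estimate).

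The first step is to pass to polar coordinates in frequency and apply Plancherel in time. Writing $\xi=\rho\omega$ with $\rho>0$ and $\omega\in S^{d-1}$, and then substituting $\tau=\rho^2$, the free evolution becomes
\EQ{
e^{it\De}f(x)=C_d\int_0^\infty e^{-it\tau}\tau^{(d-2)/2} G(\sqrt{\tau},x)\,d\tau,\quad G(\rho,x):=\int_{S^{d-1}} e^{i\rho x\cdot\omega}\hat f(\rho\omega)\,d\sigma(\omega),
}
which exhibits $e^{it\De}f(\cdot,x)$ as (a constant times) the inverse Fourier transform in $t$ of a function of $\tau$. Plancherel then yields the pointwise-in-$x$ identity $\|e^{it\De}f(\cdot,x)\|_{L_t^2}^2\lsm\int_0^\infty\rho^{2d-3}|G(\rho,x)|^2\,d\rho$.

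The second step is to control the spatial $L^2$-norm of $G(\rho,\cdot)$ on a ball of radius $R$. I interpret $G(\rho,x)=\widehat{h_\rho d\sigma}(-\rho x)$ with $h_\rho(\omega):=\hat f(\rho\omega)$ and invoke the classical Agmon--H\"ormander bound
\EQ{
\sup_{r\goe 1}r^{-1}\int_{|y|\loe r}\absb{\widehat{h\,d\sigma}(y)}^2\,dy\lsm\|h\|_{L^2(S^{d-1})}^2.
}
After rescaling $y=\rho x$ this gives $\int_{|x|\loe R}|G(\rho,x)|^2\,dx\lsm R\rho^{-(d-1)}\int_{S^{d-1}}|\hat f(\rho\omega)|^2\,d\sigma$ whenever $R\rho\goe 1$; the regime $R\rho<1$ is handled by an elementary $L^\infty_x$ bound for $G$ on a small ball, which yields an even better estimate.

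Combining the two steps via Fubini and converting back from polar to Cartesian in $\xi$ gives
\EQ{
\|e^{it\De}f\|_{L_t^2 L_x^2(|x|\loe R)}^2\lsm R\int_0^\infty\rho^{d-2}\int_{S^{d-1}}|\hat f(\rho\omega)|^2\,d\sigma\,d\rho = R\int_{\R^3}|\xi|^{-1}|\hat f(\xi)|^2\,d\xi = R\,\normb{|\nabla|^{-1/2}f}_{L_x^2}^2,
}
and dividing by $R$ and taking the supremum completes the proof. The only nontrivial ingredient is the Agmon--H\"ormander bound; everything else is Plancherel, Fubini, and changes of variable. An alternative Kato-style approach would compute $\tfrac{d}{dt}\jb{u,Au}$ for a multiplier $A\sim -i\tfrac{x}{|x|}\cdot\nabla$ (or a mollification thereof) and extract the desired spacetime bulk from the commutator $[-\De,A]$, producing the same bound with explicit constants but through a longer Morawetz-type computation.
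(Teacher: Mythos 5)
Your proof is correct. The paper does not prove this lemma—it cites it from \cite{CS88JAMS,GV92CMP,KPV91Indiana}—so there is no in-paper argument to compare against. Your route (polar decomposition of frequency, $\tau=\rho^2$ change of variables, Plancherel in $t$, then the Agmon--H\"ormander restriction bound $\sup_{r\geq 1}r^{-1}\int_{|y|\leq r}|\widehat{h\,d\sigma}|^2\lesssim\|h\|_{L^2(S^{d-1})}^2$ applied to $G(\rho,x)=\widehat{h_\rho\,d\sigma}(-\rho x)$) is essentially the classical Constantin--Saut proof from the first reference, including the correct treatment of the small regime $\rho R<1$ via the trivial $L^\infty_x$ bound on $G$, where $(\rho R)^{d-1}\leq 1$ makes $R^d\lesssim R\rho^{1-d}$. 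The Kenig--Ponce--Vega reference proceeds instead via a one-dimensional Kenig--Ruiz oscillatory-integral argument sliced direction by direction, and the Kato multiplier/Morawetz method you mention is the third standard route; all yield the same $\frac12$-derivative smoothing, and your choice is the cleanest for dimensions $d\geq 2$, which is all the paper needs since it works in $\R^3$.
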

\begin{lem}[Strichartz estimate, \cite{KT98AJM,KTV14book}]\label{lem:strichartz}
	Let $I\subset \R$. Suppose that $(q,r)$ and $(\wt{q},\wt{r})$ are $L_x^2$-admissible.
	Then,
	\EQn{\label{eq:strichartz-1}
		\norm{ e^{it\De}\ph}_{L_t^qL_x^r(\R\times\R^d)} \lsm \norm{\ph}_{L_x^2},
	}
	and
	\EQn{\label{eq:strichartz-2}
		\normb{\int_0^t e^{i(t-s)\De} F(s)\ds}_{L_t^qL_x^r(\R\times \R^d)} \lsm \norm{F}_{L_t^{\wt{q}'} L_x^{\wt{r}'}(\R\times\R^d)}.
	}
	Moreover, for $2\loe q<\I$,
	\EQn{
	\norm{u}_{L_t^q L_x^r(I\times \R^3)} \lsm \norm{u}_{U_\De^q(I;L_x^2)},
}
	and if we assume further $q\ne2$, then 
	\EQn{\label{eq:strichartz-v2}
	\norm{u}_{L_t^q L_x^r(I\times \R^3)}	 \lsm \norm{u}_{V_\De^2(I;L_x^2)}.
	}
\end{lem}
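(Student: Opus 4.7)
The plan is to handle the three assertions in order, since each builds on the previous. For the homogeneous estimate $\|e^{it\Delta}\phi\|_{L^q_t L^r_x}\lsm\|\phi\|_{L^2}$, I would invoke the standard $TT^*$ reduction: writing $T\phi=e^{it\Delta}\phi$, the composite $TT^*F=\int e^{i(t-s)\Delta}F(s)\,ds$ must be bounded from $L^{q'}_t L^{r'}_x$ to $L^q_t L^r_x$. Combining the dispersive bound $\|e^{it\Delta}\|_{L^1\to L^\infty}\lsm |t|^{-d/2}$ with the $L^2$ isometry, followed by Hardy--Littlewood--Sobolev in the time variable and Riesz--Thorin interpolation, yields all non-endpoint admissible pairs. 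The endpoint $(2,\tfrac{2d}{d-2})$ for $d\geq 3$ requires the bilinear real interpolation with dyadic decomposition developed in \cite{KT98AJM}. The inhomogeneous bound \eqref{eq:strichartz-2} then follows from the Christ--Kiselev lemma when $q>\tilde q'$ and from the $TT^*$ bilinear form directly otherwise.

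For the $U^q_\Delta$ estimate in \eqref{eq:strichartz-v2}, my plan is to exploit atomic decomposition. Any $U^q_\Delta$-atom has the form $a(t)=\sum_{k=1}^K \mathbbm{1}_{[t_{k-1},t_k)}(t)\,e^{it\Delta}\phi_{k-1}$ with $\sum_k\|\phi_{k-1}\|_{L^2}^q=1$. Because the time intervals are disjoint,
\[
\|a\|_{L^q_t L^r_x(I\times\R^3)}^q
=\sum_{k=1}^K \|e^{it\Delta}\phi_{k-1}\|_{L^q([t_{k-1},t_k);L^r_x)}^q
\lsm \sum_{k=1}^K\|\phi_{k-1}\|_{L^2}^q = 1,
\]
by the homogeneous Strichartz estimate applied on each slice. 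Taking the infimum over atomic representations in \eqref{eq:upnorm} gives $\|u\|_{L^q_tL^r_x}\lsm\|u\|_{U^q_\Delta(I;L^2_x)}$ for every $u$ in the atomic space.

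The final embedding $\|u\|_{U^q_\Delta(I;L^2)}\lsm \|u\|_{V^2_\Delta(I;L^2)}$ is immediate: the hypothesis $q>2$ is exactly what makes the continuous inclusion $V^2_{rc}\hookrightarrow U^q$ (recalled just before Lemma~\ref{lem:upvpduality}) available, and applying it to $e^{-it\Delta}u$ in the definitions of $U^q_\Delta$ and $V^2_\Delta$ completes the chain. Alternatively, one could obtain the $V^2_\Delta$ bound via Lemma~\ref{lem:upvp-interpolation} with $p=2$, using the trivial $U^2_\Delta\to L^q_tL^r_x$ bound that comes from the atomic argument above together with the $U^q_\Delta$ version, but the direct embedding is simpler.

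The only genuinely non-trivial step is the endpoint case of classical Strichartz, which I would quote from \cite{KT98AJM} rather than reproduce. Everything else is essentially bookkeeping: the $TT^*$ and interpolation machinery for the non-endpoint classical Strichartz is textbook, and the passage to $U^q_\Delta$ and $V^2_\Delta$ uses nothing beyond the atomic definition and the embedding chain $U^p\hookrightarrow V^p_{rc}\hookrightarrow U^q$ already recorded in the paper.
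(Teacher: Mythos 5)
Your proof is correct and follows the standard approach: the paper cites this lemma from \cite{KT98AJM,KTV14book} without reproducing a proof, and what you outline is precisely the textbook route (dispersive estimate plus $TT^*$, Hardy--Littlewood--Sobolev and interpolation for non-endpoints, Keel--Tao for the endpoint, Christ--Kiselev for the retarded estimate, and the atomic decomposition for the $U^q_\Delta$ transfer). The atomic computation is the key point of the $U^q_\Delta$ bound and you have it right: disjointness of the partition intervals turns the $L^q_t$-norm of a $U^q_\Delta$-atom into an $\ell^q$ sum over slices, each controlled by the homogeneous Strichartz estimate, and the normalization $\sum_k\|\phi_{k-1}\|_{L^2}^q=1$ closes the estimate; the triangle inequality over atomic decompositions and the infimum in \eqref{eq:upnorm} then give the bound for general $u\in U^q_\Delta$, and the final link $\|u\|_{U^q_\Delta}\lsm\|u\|_{V^2_\Delta}$ is exactly the embedding $V^2_{rc}\hookrightarrow U^q$ for $q>2$ applied to $e^{-it\Delta}u$.
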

In this paper, we need the the following multi-scale bi-linear Strichartz estimate for Schr\"odinger equation, which is a particular case of Theorem 1.2 in \cite{Can19MathAnn}: 
\begin{lem}\label{lem:bilinearstrichartz-origin}
	Let $1\loe q,r \loe 2$, $\rev q + \frac{2}{r}<2$, and suppose that $M,N\in2^\N$ satisfy $M\ll N$. Then for any $\phi,\psi\in L_x^2(\R^3)$,
	\EQn{
		\norm{[e^{it\De}P_N\phi ][e^{\pm it\De}P_M \psi]}_{L_t^qL_x^r(\R\times\R^3)} \lsm &  \frac{M^{4-\frac{4}{r}-\frac{2}{q}}}{N^{1-\rev r}}\norm{P_N\phi}_{L_x^2}\norm{P_M\psi}_{L_x^2}.
	}
\end{lem}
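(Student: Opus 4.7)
The plan is to deduce the statement directly from Candy's Theorem~1.2 in \cite{Can19MathAnn}, after verifying that the exponents $(q,r)$ satisfying $1\loe q,r\loe 2$ and $\rev q+\frac{2}{r}<2$ lie in the admissibility region established there. Since the lemma is invoked as a named reference, the ``proof'' in the paper is essentially the citation itself; what is worth recording is how the right-hand side arises and what the strict inequality $\rev q+\frac{2}{r}<2$ actually buys us.

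For orientation, at the endpoint $(q,r)=(2,2)$ the claimed bound reads $\frac{M}{N^{1/2}}\norm{P_N\phi}_{L_x^2}\norm{P_M\psi}_{L_x^2}$, which is the classical Bourgain--Klainerman--Machedon bilinear Strichartz estimate on $\R^3$. That endpoint is obtained by Plancherel in spacetime after writing the product $e^{it\De}P_N\phi\cdot e^{\pm it\De}P_M\psi$ as a convolution in frequency, followed by a change of variables exploiting the transversality of the two paraboloids in the regime $M\ll N$; the Jacobian yields $N^{-1/2}$ and the measure of the lower-frequency support yields the factor $M$. From this anchor point one reaches the full range in two interpolation steps: first push the spatial exponent down from $r=2$ to $r\in[1,2]$ by Hausdorff--Young in $x$, using that the spatial Fourier support of the product lies in an annulus of size $\sim N$; then push the temporal exponent down from $q=2$ to $q\in[1,2]$ by Hausdorff--Young in $t$, using that the spacetime Fourier transform of the bilinear product concentrates on the quadric $\tau+|\xi-\eta|^2\pm|\eta|^2=0$, whose local measure is controlled by the separation $N$. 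Real interpolation between the resulting corner estimates then yields the entire open region $\rev q+\frac{2}{r}<2$.

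The main obstacle in a self-contained proof would be the temporal step: once $q<2$, Plancherel in time is no longer an isometry and one loses a factor depending on the time-frequency support of the bilinear product. The strict subcriticality $\rev q+\frac{2}{r}<2$ is precisely what ensures that this loss is absorbed by the transversality gain from $M\ll N$, whereas on the critical line $\rev q+\frac{2}{r}=2$ logarithmic losses would appear. Reproducing this argument carefully requires a Whitney-type decomposition of frequency space near the resonant set together with a wave-packet analysis on each piece, which is the content of \cite{Can19MathAnn}. Accordingly, the cleanest route in the present paper is simply to cite \cite[Theorem~1.2]{Can19MathAnn} after checking that its hypotheses cover the regime $1\loe q,r\loe 2$, $\rev q+\frac{2}{r}<2$, as already flagged in the statement.
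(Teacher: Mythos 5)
Your bottom line --- that the lemma follows by specializing Candy's Theorem~1.2 to the Schr\"odinger phase, and that the paper treats the statement purely as a citation --- is correct and matches the paper exactly; the paper offers no proof beyond the sentence ``which is a particular case of Theorem~1.2 in \cite{Can19MathAnn}.'' Your verification of the $(q,r)=(2,2)$ corner as the classical bilinear refinement of Strichartz, with the bound $M N^{-1/2}\norm{P_N\phi}_{L_x^2}\norm{P_M\psi}_{L_x^2}$ on $\R^3$, is also accurate.

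However, the heuristic interpolation sketch in the middle of your argument would not survive being made precise, and you should not present it as a plausible proof outline. Compact Fourier support lets one \emph{raise} a Lebesgue exponent via Bernstein: if $\widehat F$ is supported on a set $E$ then $\norm{F}_{L^p}\lsm |E|^{\frac12-\frac1p}\norm{F}_{L^2}$ for $p\goe 2$. It gives no control in the other direction. To go from $L^2$ down to $L^r$ with $r<2$, Hausdorff--Young provides no help: for a Gaussian $F$ of large width $R$ (whose Fourier transform is concentrated near the origin and in particular lies in any fixed ball) one has $\norm{F}_{L^1}/\norm{F}_{L^2}\sim R^{3/2}\to\infty$, so Fourier support alone cannot bound a lower norm by a higher one. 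The same obstruction blocks the proposed Hausdorff--Young step in $t$. What \emph{is} cheaply available is the $L_t^\I L_x^1$ endpoint from Cauchy--Schwarz in $x$ and mass conservation, but interpolating that with the $(2,2)$ estimate only reaches exponents with $q\goe 2$, which lie outside the range $1\loe q,r\loe 2$ of the lemma. Obtaining $q,r<2$ with the stated $M$ and $N$ dependence genuinely requires the bilinear restriction theorem of Tao and Candy's multi-scale refinement, which your final sentence already acknowledges; just be aware that the two intermediate ``push down by Hausdorff--Young'' steps are not a poor man's version of that argument but are actually false as stated.
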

The bilinear Strichartz estimate was first introduced by Bourgain \cite{Bou98IMRN}, and further extended in \cite{CKSTT08Annals,Vis07Duke}, when $q=r=2$. The $q,r<2$ case was referred to bilinear restriction estimates for paraboloid, first obtained by Tao \cite{Tao03GAFA}, based on the method developed by Wolff \cite{Wol01Annals}.

We will frequently use the version of bi-linear estimate for general functions, see Candy's result \cite{Can19MathAnn}.
\begin{lem}\label{lem:bilinearstrichartz}
	Let $I\subset\R$, $a\in I$, $1\loe q,r \loe 2$, $\rev q + \frac{2}{r}<2$, and suppose that $M,N\in2^\N$ satisfy $M\ll N$. 
	Then,
	\EQn{\label{eq:bilinearstrichartz}
	\norm{P_NuP_Mv}_{L_t^qL_x^r(I\times\R^3)} \lsm &  \frac{M^{4-\frac{4}{r}-\frac{2}{q}}}{N^{1-\rev r}}\norm{P_Nu}_{ U_\De^2(I;L_x^2(\R^3))} \norm{P_Mv}_{ U_\De^2(I;L_x^2(\R^3))},
	}
\end{lem}
\subsection{Maximal function estimates and Littlewood-Paley theory}
Let $\M$ be the Hardy-Littlewood maximal operator:
\EQ{
\M f(x):=\sup_{r>0}\frac{1}{|B(0,r)|}\int_{B(0,r)} |f(x-y)|\dy,
}
where $B(0,r)=\fbrk{x\in\R^d:|x|\loe r}$. $\M$ is bounded on $L_x^p$ for $1<p<\I$. Furthermore, we have the vector-valued version of the boundedness:
\begin{lem}[$L^pl^2$-boundedness for maximal function, see \cite{Stein93book}]\label{lem:HL-boundedness}
Let $1<p<\I$ and $\fbrk{f_j}_{j\in \N^+}$ be a sequence of functions such that $\norm{f_j}_{l_{j\in\N^+}^2}\in L_x^p$. Then, we have
\EQ{
\norm{\M(f_j)}_{L_x^p l_{j\in\N^+}^2}\lsm \norm{f_j}_{L_x^p l_{j\in\N^+}^2}.
}
\end{lem}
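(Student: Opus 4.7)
The plan is to reproduce the classical Fefferman--Stein vector-valued maximal inequality. The $p=2$ case is immediate: by Tonelli and the scalar Hardy--Littlewood bound applied componentwise,
\EQ{
\norm{\M f_j}_{L_x^2 l_{j\in\N^+}^2}^2 = \sum_{j\in\N^+} \norm{\M f_j}_{L_x^2}^2 \lsm \sum_{j\in\N^+} \norm{f_j}_{L_x^2}^2 = \norm{f_j}_{L_x^2 l_{j\in\N^+}^2}^2.
}
The rest reduces to establishing a vector-valued weak-type $(1,1)$ endpoint, after which Marcinkiewicz interpolation covers $1 < p \loe 2$.

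For the endpoint I would apply a Calder\'on--Zygmund decomposition at level $\lambda$ to the scalar $L^1$ envelope $F(x):=\norm{f_j(x)}_{l_{j\in\N^+}^2}$, producing disjoint dyadic cubes $\fbrk{Q_k}$ with $\sum_k |Q_k| \lsm \lambda^{-1}\norm{F}_{L_x^1}$ and $F \loe \lambda$ a.e.\ on $(\cup_k Q_k)^c$. Then decompose each component $f_j = g_j + b_j$ by setting $g_j = f_j$ off $\cup_k Q_k$ and $g_j$ equal to the $Q_k$-average of $f_j$ on each $Q_k$. Jensen in $l_{j}^2$ together with the CZ stopping condition yields $\norm{g_j}_{L_x^\I l_j^2}\lsm \lambda$ and $\norm{g_j}_{L_x^1 l_j^2}\lsm \norm{F}_{L_x^1}$, hence $\norm{g_j}_{L_x^2 l_j^2}^2\lsm \lambda \norm{F}_{L_x^1}$; combining with the $p=2$ bound above and Chebyshev handles the weak-$(1,1)$ contribution of the good part. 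For the bad part, I would discard the doubled cubes $\cup_k 2Q_k$ (of total measure $\lsm \lambda^{-1}\norm{F}_{L_x^1}$) and, outside this set, control $\M b_j(x)$ by the decaying off-diagonal contribution from each $Q_k$; passing to the $l_j^2$ norm via Minkowski reduces matters to the scalar weak $(1,1)$ bound for $\M$ applied to $F$.

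Finally, for $2 < p < \I$ I would either rerun the Calder\'on--Zygmund argument at an adjusted exponent to obtain a weak endpoint at some $p_0 > p$, or linearize $\M$ by selecting the optimal radius $r(x)$ pointwise and invoke the duality $(L_x^p l_{j}^2)^* = L_x^{p'} l_{j}^2$ against the already-established bound on the adjoint side. The main obstacle is the weak endpoint: because $\M$ is sublinear and carries no cancellation, the bad-part step cannot imitate the usual singular-integral mean-zero trick, and instead relies on the direct geometric off-diagonal estimate for $\M b_j$ outside the doubled cubes combined with Minkowski in $l_j^2$. Once this endpoint is in place, the rest of the theorem is a routine interpolation exercise.
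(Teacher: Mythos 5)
The paper does not prove this lemma at all; it simply cites \cite{Stein93book}, where the Fefferman--Stein vector-valued maximal theorem is established. So there is no in-paper argument to compare against, and the only question is whether your sketch closes on its own.

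Your high-level plan --- the $p=2$ case by componentwise Fubini, a Calder\'on--Zygmund decomposition of the scalar envelope $F=\normo{f_j}_{l^2_j}$, a good/bad split, a weak $(1,1)$ endpoint, and Marcinkiewicz interpolation for $1<p<2$ --- is the shape of the classical argument, and the good-part treatment is fine. The genuine gap is the bad-part step. For $x\notin\bigcup_k 2Q_k$ the direct geometric bound you describe is $\M b_j^{(k)}(x)\lsm \normo{b_j^{(k)}}_{L^1}(\ell_k+d_k(x))^{-n}$ with $d_k(x)=\mathrm{dist}(x,Q_k)$; applying Minkowski in $l^2_j$ and the CZ stopping condition then yields $\normo{\M b_j(x)}_{l^2_j}\lsm\lambda\sum_k|Q_k|(\ell_k+d_k(x))^{-n}\sim\lambda\sum_k\M\cha_{Q_k}(x)$. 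Unlike the Calder\'on--Zygmund singular-integral setting, this right-hand side is \emph{not} integrable over $(\bigcup_k 2Q_k)^c$: each summand decays only like $d_k(x)^{-n}$, and $\int_{\{d_k(x)>\ell_k\}}(\ell_k+d_k(x))^{-n}\,dx$ is logarithmically divergent. You correctly observe that the mean-zero trick is unavailable for $\M$, but bare off-diagonal decay is not an adequate substitute, and the assertion that ``Minkowski reduces matters to the scalar weak $(1,1)$ for $\M$ applied to $F$'' does not follow: $\sum_k\M\cha_{Q_k}(x)$ is neither equal to nor pointwise dominated by $\M F(x)$, and a sum of weak-$L^1$ functions need not have additively controlled weak-$L^1$ quasi-norm. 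The standard ways to close the weak endpoint go through something sharper, e.g.\ the Fefferman--Stein weighted maximal inequality $w(\{\M f>\lambda\})\lsm\lambda^{-1}\int|f|\,\M w$ (proved via Vitali), followed by duality or a good-$\lambda$ argument. The $p>2$ step is also loose: ``rerun CZ at an adjusted exponent to obtain a weak endpoint at some $p_0>p$'' is not a well-defined operation, and linearizing $\M$ and dualizing $(L^p_xl^2_j)^*$ is delicate because the adjoint of the linearized averaging operator is not itself a maximal operator. The usual route for $p\goe 2$ is to dualize $\normo{\normo{\M f_j}_{l^2}}_{L^p}^2=\normo{\sum_j(\M f_j)^2}_{L^{p/2}}$ against a nonnegative $g\in L^{(p/2)'}$ and invoke $\int(\M f)^2 w\lsm\int|f|^2\,\M w$ with $w=g$.
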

We also gather some useful classical results about the Littlewood-Paley projection operator.
\begin{lem}[Maximal Littlewood-Paley estimates]\label{lem:linfty-littewoodpaley}
Let $1<p<\I$ and $f\in L_x^p(\R^d)$. Then, we have
\EQ{
\normb{\sup_{N\in 2^\N}\absb{P_{N} f}}_{L_x^p} + \normb{\sup_{N\in 2^\N}\absb{P_{\loe N} f}}_{L_x^p} \lsm \norm{f}_{L_x^p}.
}
\end{lem}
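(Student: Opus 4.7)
The plan is to reduce both maximal estimates to pointwise domination by the Hardy--Littlewood maximal operator $\M$, and then invoke the classical scalar boundedness $\norm{\M f}_{L_x^p}\lsm \norm{f}_{L_x^p}$ for $1<p<\I$ (the scalar version of Lemma \ref{lem:HL-boundedness}).

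First, I would write each projection as convolution with a dilated Schwartz kernel. Setting $K:=\F^{-1}\phi_{\loe 1}$ and $L:=\F^{-1}(\phi_{\loe 1}-\phi_{\loe 1/2})$, both $K$ and $L$ lie in $\Sch(\R^d)$, and for each $N\in 2^\N$,
\[
P_{\loe N} f = f * K_N, \qquad P_N f = f * L_N,
\]
where $K_N(x):=N^d K(Nx)$ and $L_N(x):=N^d L(Nx)$. The point is that the scaling leaves the $L^1$-norm of the kernel invariant, and rescaled Schwartz functions admit a common majorant.

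Second, I would invoke the standard pointwise majorization: for any $G\in \Sch(\R^d)$ there exists a radial, decreasing, integrable majorant $\Phi$ with $|G|\loe \Phi$ (e.g.\ $\Phi(x)=C(1+|x|)^{-d-1}$), and then
\[
\sup_{N\in 2^\N}\absb{G_N * f}(x) \loe \brkb{\int_{\R^d}\Phi\dy}\,\M f(x).
\]
This is the familiar reduction obtained by dyadically decomposing $\Phi$ into annular layers $\fbrk{|y|\sim 2^j}$ and controlling each piece by averaging over the enclosing ball. Applied to $G=K$ and $G=L$, this yields the pointwise bounds
\[
\sup_{N\in 2^\N}\absb{P_{\loe N} f}(x) \lsm \M f(x), \qquad \sup_{N\in 2^\N}\absb{P_N f}(x)\lsm \M f(x).
\]

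Finally, taking the $L_x^p$-norm of both inequalities and applying the $L^p$-boundedness of $\M$ for $1<p<\I$ concludes the proof. The only mildly technical step is the pointwise majorization by $\M$, but this is entirely classical and is in fact a direct corollary of the general maximal theorem for approximations to the identity (see e.g.\ \cite{Stein93book}, Chapter II), so no genuine obstacle arises; the statement should be viewed as essentially a one-paragraph consequence of the Hardy--Littlewood theorem.
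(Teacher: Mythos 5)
Your proof is correct and follows essentially the same route as the paper: rewrite $P_N f$ and $P_{\loe N} f$ as convolution with rescaled Schwartz kernels, dominate the supremum pointwise by the Hardy--Littlewood maximal function $\M f$, and conclude by the $L^p$-boundedness of $\M$ for $1<p<\I$.
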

\begin{proof}
Note that  $\F^{-1}(\phi_{N})$ is a $L^1$-renormalised, radial Schwartz function, we have that for any $x\in \R^d$, 
\EQ{
\absb{P_{N} f(x)} = \absb{\F^{-1}(\phi_{N})* f(x)}\lsm \M(f)(x),
}
where $\M$ is the Hardy-Littlewood maximal operator. Then, by the $L^p$ boundedness of $\M$,
\EQ{
\normb{\sup_{N\in 2^\N}\absb{P_{N} f}}_{L_x^p} \lsm \normb{\M(f)}_{L_x^p} \lsm \norm{f}_{L_x^p}.
}
The proof for $P_{\loe N}f$ follows similarly.
\end{proof}

\begin{lem}[Littlewood-Paley estimates]\label{lem:littlewood-paley}
	Let $1<p<\I$ and $f\in L_x^p(\R^d)$. Then, we have
	\EQ{
		\norm{f_N}_{L_x^p l_{N\in2^\N}^2}\sim_p \norm{f}_{L_x^p}.
	}
\end{lem}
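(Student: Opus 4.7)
The plan is the standard Peetre/Mikhlin randomization argument, carried out in both directions by duality. For the forward inequality $\|f_N\|_{L_x^p l^2} \lesssim_p \|f\|_{L_x^p}$, I would first introduce independent Rademacher variables $\{\varepsilon_N(\omega)\}_{N\in 2^{\N}}$ on an auxiliary probability space and consider the randomized multiplier operator
\EQ{
T_\omega f := \sum_{N\in 2^{\N}} \varepsilon_N(\omega)\, P_N f,
}
with symbol $m_\omega(\xi) = \sum_N \varepsilon_N(\omega) \phi_N(|\xi|)$. Since at each $\xi \neq 0$ at most two terms $\phi_N(|\xi|)$ are nonzero and each $\phi_N$ is a dyadic dilate of a fixed bump, the usual Mikhlin (H\"ormander) condition $|\partial^\alpha m_\omega(\xi)| \lesssim_\alpha |\xi|^{-|\alpha|}$ holds uniformly in $\omega$. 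The Mikhlin multiplier theorem therefore gives $\|T_\omega f\|_{L_x^p} \lesssim_p \|f\|_{L_x^p}$ with a bound independent of $\omega$.

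Next I would raise both sides to the $p$-th power, integrate in $\omega$, and apply Khintchine's inequality pointwise in $x$: for a.e.\ $x$,
\EQ{
\int \Big|\sum_{N} \varepsilon_N(\omega) P_N f(x)\Big|^p \,d\omega \;\sim_p\; \Big(\sum_{N} |P_N f(x)|^2\Big)^{p/2}.
}
Integrating in $x$ and combining with the uniform Mikhlin bound yields
\EQ{
\Big\|\Big(\sum_{N} |P_N f|^2\Big)^{1/2}\Big\|_{L_x^p} \lesssim_p \|f\|_{L_x^p},
}
which is the $\lesssim$ half. Note this applies equally to any fattened projector $\wt P_N$ whose symbol is again a dyadic dilate of a bump.

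For the reverse inequality, I would use duality. Pick $\wt P_N$ with $P_N = P_N \wt P_N$ (using the fattened cutoff $\wt\chi_j$ already introduced in the notation section). For any $g \in L_x^{p'}$ with $\|g\|_{L_x^{p'}}=1$, one writes
\EQ{
\langle f, g \rangle = \sum_{N} \langle P_N f, \wt P_N g\rangle,
}
apply Cauchy–Schwarz in $N$ pointwise in $x$ and H\"older in $x$ to obtain
\EQ{
|\langle f, g\rangle| \;\le\; \Big\|\Big(\sum_N |P_N f|^2\Big)^{1/2}\Big\|_{L_x^p}\, \Big\|\Big(\sum_N |\wt P_N g|^2\Big)^{1/2}\Big\|_{L_x^{p'}}.
}
The second factor is $\lesssim_p \|g\|_{L_x^{p'}}=1$ by the forward inequality applied to $\wt P_N$ (valid since $1<p'<\infty$). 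Taking the supremum over such $g$ gives $\|f\|_{L_x^p} \lesssim_p \|f_N\|_{L_x^p l_N^2}$.

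The only genuinely substantive step is the uniform Mikhlin bound on $m_\omega$; the rest is bookkeeping. Since this is a textbook fact (Stein, \emph{Singular Integrals}, Ch.~IV), I would alternatively just cite it directly rather than rewriting the proof.
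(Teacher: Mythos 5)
The paper provides no proof of this lemma at all — it is stated as a textbook fact alongside the other Littlewood--Paley preliminaries (the only proof nearby is for the separate maximal estimate, Lemma~\ref{lem:linfty-littewoodpaley}, via the Hardy--Littlewood maximal function). So there is no "paper's proof" to compare against; your own closing remark that one "would alternatively just cite it directly" is in fact exactly what the authors do.

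Your argument itself is the standard Rademacher/Khintchine proof and is essentially correct: the uniform Mikhlin bound on $m_\omega$ (since the $\phi_N$ are dyadic dilates of a fixed bump, with bounded overlap) gives $\|T_\omega f\|_{L^p}\lesssim_p\|f\|_{L^p}$ uniformly in $\omega$, Khintchine pointwise in $x$ converts this into the forward square-function bound, and the duality step gives the reverse. One small point worth making explicit: the identity $\langle f,g\rangle=\sum_N\langle P_Nf,\widetilde P_N g\rangle$ in the reverse direction uses $\sum_N P_N=\mathrm{Id}$, which in turn requires the $N=1$ block of the decomposition to absorb all low frequencies (i.e.\ $P_1=P_{\le 1}$, the inhomogeneous convention). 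As written, the paper's definition of $\phi_N$ at $N=1$ is ambiguous, so you should state that you are using the inhomogeneous low block; without that, the reverse inequality is simply false (take $f$ supported at frequencies $\lesssim 1/2$). This is a matter of pinning down the conventions rather than a flaw in your argument, but the duality step silently relies on it and it should be said.
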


\subsection{Probabilistic theory}
We recall the large deviation estimate, which holds for the random variable sequence $\fbrk{\re g_k,\im g_k}$ in the Definition \ref{defn:randomization}.
\begin{lem}[Large deviation estimate, \cite{BT08inventI}]\label{lem:large-deviation}
Let $(\Om, \A, \PP)$ be a probability space. Let $\fbrk{g_n}_{n\in\N^+}$ be a sequence of real-valued, independent, zero-mean random variables with associated distributions $\fbrk{\mu_n}_{n\in\N^+}$ on $\Om$. Suppose $\fbrk{\mu_n}_{n\in\N^+}$ satisfies that there exists $c>0$ such that for all $\ga\in\R$ and $n\in\N^+$
\EQ{
\absb{\int_{\R}e^{\ga x}\mathrm d \mu_n(x)}\loe e^{c\ga^2},
}
then there exists $\al>0$ such that for any $\la>0$ and any complex-valued sequence $\fbrk{c_n}_{n\in\N^+}\in l_n^2$, we have
\EQ{
\PP\brkb{\fbrkb{\om:\absb{\sum_{n=1}^\I c_n g_n(\om)}>\la}}\loe 2\exp\fbrkb{-\al\la\norm{c_n}_{l_n^2}^{-2}}.
}
Furthermore, there exists $C>0$ such that for any $2\loe p<\I$ and complex-valued sequence $\fbrk{c_n}_{n\in\N^+}\in l_n^2$, we have
\EQn{\label{eq:large-deviation}
\normb{\sum_{n=1}^\I c_n g_n(\om)}_{L_\om^p(\Om)} \loe C\sqrt{p} \norm{c_n}_{l_n^2}.
}
\end{lem}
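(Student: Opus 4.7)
The plan is to follow the classical Chernoff/exponential moment approach, adapted to handle the complex-valued setting. First I would reduce to the real-valued case: writing $c_n g_n = \re c_n g_n + i \im c_n g_n$ and noting that the real and imaginary parts of $\sum c_n g_n$ are each real linear combinations of the real-valued independent random variables in the family, so that a tail bound for real sums transfers via $|S| \le |\re S|+|\im S|$ at the cost of harmless constants. After this reduction I may assume $c_n \in \R$ and $\{g_n\}$ is a real, independent, zero-mean sequence satisfying the stated exponential moment bound.

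For the tail estimate, the key step is the standard Markov-with-exponential-weight trick: for any $\ga>0$, independence gives
\EQ{
\E\mbrkb{e^{\ga S_N}} = \prod_{n=1}^N \int_\R e^{\ga c_n x}\,\mathrm d\mu_n(x) \loe \prod_{n=1}^N e^{c\ga^2 c_n^2} = e^{c\ga^2\norm{c_n}_{l_n^2}^2},
}
where $S_N = \sum_{n=1}^N c_n g_n$. Chebyshev then yields $\PP(S_N>\la)\loe e^{-\ga\la+c\ga^2\|c_n\|_{l^2}^2}$, and optimizing in $\ga$ (taking $\ga \sim \la/\|c_n\|_{l^2}^2$) gives a Gaussian tail of the form $\exp(-\al\la^2\|c_n\|_{l^2}^{-2})$. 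Applying the same bound to $-S_N$ and to the imaginary part produces the two-sided bound, and passing $N\ra\I$ is justified by the $L^2$-convergence of the partial sums (which follows from orthogonality and $\{c_n\}\in l^2$), together with Fatou's lemma for the probability event.

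For the $L^p_\om$ estimate, I would integrate the tail bound via the layer-cake formula,
\EQ{
\normb{\textstyle\sum_n c_n g_n}_{L_\om^p}^p = p\int_0^\I \la^{p-1}\PP\brkb{\absb{\textstyle\sum_n c_n g_n}>\la}\,\mathrm d\la \lsm p\int_0^\I \la^{p-1} e^{-\al\la^2\norm{c_n}_{l_n^2}^{-2}}\,\mathrm d\la,
}
then make the substitution $u=\al\la^2\|c_n\|_{l^2}^{-2}$ to reduce the integral to a Gamma function: the right side becomes $C^p p\,\Gamma(p/2)\,\|c_n\|_{l^2}^p$ for some absolute constant $C$. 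Taking $p$-th roots and applying Stirling's formula $\Gamma(p/2)^{1/p}\lsm \sqrt{p}$ gives the claimed $\sqrt{p}$ dependence.

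The routine bulk of the proof is the Chernoff calculation; the only places that require real care are the passage to the complex-valued/infinite-sum setting and tracking the optimization constants so that the exponent genuinely becomes quadratic in $\la$, which is what powers the sharp $\sqrt{p}$ growth in the $L^p$ bound. The main potential obstacle, if one wanted maximal generality, would be verifying the uniform sub-Gaussian hypothesis on $\{\mu_n\}$ for the specific Gaussian variables used later (where it is immediate), but within the statement itself no further technical difficulty arises beyond these bookkeeping issues.
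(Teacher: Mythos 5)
The paper does not actually prove this lemma; it is quoted from Burq--Tzvetkov \cite{BT08inventI}, so there is no in-paper argument to compare against. That said, your Chernoff--Markov--layer-cake proof is the standard one and is essentially correct: the complex-to-real reduction via $\re S = \sum(\re c_n)g_n$ and $\im S = \sum(\im c_n)g_n$ (each a real linear combination of the independent $g_n$) works, the independence factorization plus the uniform sub-Gaussian hypothesis yields $\E[e^{\ga S_N}]\loe e^{c\ga^2\|c\|_{l^2}^2}$, optimizing in $\ga$ gives the Gaussian tail, and the Gamma-function computation from the tail integral gives exactly the $\sqrt{p}$ growth. Your remark that $L^2$-convergence of the partial sums is needed to pass $N\to\I$ is also right; it does follow from orthogonality, since independence plus zero mean gives $\E[g_n g_m]=0$ for $n\neq m$ and the exponential moment bound gives a uniform bound on $\E[g_n^2]$.

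One thing worth flagging: the tail bound as typeset in the paper reads $2\exp(-\al\la\|c_n\|_{l_n^2}^{-2})$, with $\la$ to the first power, whereas your derivation produces the quadratic $2\exp(-\al\la^2\|c_n\|_{l_n^2}^{-2})$. The quadratic form is the correct one -- it is what the Chernoff optimization gives, it is the only version that is dimensionally consistent, it is what makes the layer-cake integral give $\sqrt{p}$ rather than $p$, and it matches the form used in the companion Lemma~\ref{lem:probability-estimate}. So the paper's displayed $\la$ is almost certainly a typo for $\la^2$, and your version is the one that should be used.

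Two very minor bookkeeping points. First, $|S|\le|\re S|+|\im S|$ is fine, but the cleaner route is $|S|>\la \Rightarrow |\re S|>\la/\sqrt 2 \text{ or } |\im S|>\la/\sqrt 2$, which tracks the constant a bit more tightly; either suffices. Second, when you apply the one-sided Chernoff bound to get the two-sided tail, you need to apply it both to $S_N$ and to $-S_N$, and both to the real and imaginary parts, which costs a harmless factor of $4$ (or $2$, depending on bookkeeping) -- this is consistent with the stated constant $2$ up to absorbing factors into $\al$, but it is worth making explicit in a full write-up.
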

The following lemma can be proved by the method in \cite{Tzv10gibbs}, see also \cite{DLM20AJM,DLM19Adv}.
\begin{lem}\label{lem:probability-estimate}
Let $F$ be a real-valued measurable function on a probability space $(\Om, \A, \PP)$. Suppose that there exists $C_0>0$, $K>0$ and $p_0\goe 1$ such that for any $p\goe p_0$, we have
\EQ{
\norm{F}_{L_\om^p(\Om)} \loe \sqrt{p} C_0 K.
}
Then, there exist $c>0$ and $C_1>0$, depending on $C_0$ and $p_0$ but independent of $K$, such that for any $\la>0$,
\EQ{
\PP\brkb{\fbrkb{\om\in\Om:|F(\om)|>\la}} \loe C_1 e^{-c\la^2K^{-2}}.
}
\end{lem}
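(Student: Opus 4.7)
The plan is a standard Chebyshev/Markov optimization, converting the moment bound into a Gaussian tail bound by choosing the exponent $p$ adaptively in $\la$.

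First, for any $p\goe p_0$, Markov's inequality gives
\EQ{
\PP\brkb{\fbrkb{\om\in\Om:|F(\om)|>\la}}
\loe \la^{-p}\,\norm{F}_{L_\om^p(\Om)}^p
\loe \la^{-p}(\sqrt p\,C_0 K)^p
= \brkb{\frac{\sqrt p\,C_0K}{\la}}^{p}.
}
The right-hand side is minimized (as a function of $p$) by choosing $p$ so that $\sqrt p\,C_0K/\la$ is a fixed constant strictly less than $1$. Concretely, I will take
\EQ{
p:=\frac{\la^2}{e\,C_0^2\,K^2},
}
which yields $\sqrt p\,C_0K/\la=e^{-1/2}$ and therefore
\EQn{\label{eq:proposal-gauss}
\PP\brkb{\fbrkb{\om\in\Om:|F(\om)|>\la}}
\loe e^{-p/2}
=\exp\!\brkb{-\frac{\la^2}{2e\,C_0^2\,K^2}}.
}

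Second, I need to check the constraint $p\goe p_0$ used in the hypothesis. The choice above satisfies $p\goe p_0$ precisely when $\la\goe \sqrt{e\,p_0}\,C_0K$. In this regime \eqref{eq:proposal-gauss} delivers the claim with $c=1/(2eC_0^2)$ and $C_1=1$. In the complementary regime $\la<\sqrt{e\,p_0}\,C_0K$, one has $\la^2K^{-2}<e\,p_0\,C_0^2$, so $\exp(-c\la^2K^{-2})$ is bounded below by a positive constant depending only on $C_0$ and $p_0$; thus choosing $C_1$ suitably large (depending on $C_0$ and $p_0$ but not on $K$) makes the tail inequality trivially true since the probability is at most $1$. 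Combining the two cases gives the desired bound with constants depending only on $C_0$ and $p_0$.

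Finally, for the almost sure finiteness $\PP(\{|F|<\I\})=1$, I will use the tail bound just proved: since $\{|F|=\I\}\subset\{|F|>\la\}$ for every $\la>0$,
\EQ{
\PP\brkb{\fbrkb{\om\in\Om:|F(\om)|=\I}}\loe \inf_{\la>0}C_1 e^{-c\la^2K^{-2}}=0,
}
whence $\PP(\{|F|<\I\})=1$. There is no serious obstacle here; the only subtlety is splitting into the two regimes in $\la$ so that the optimized value of $p$ is actually admissible in the hypothesis, and this is handled by the trivial bound in the small-$\la$ range.
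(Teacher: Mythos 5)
Your argument is correct and is essentially the standard Chernoff-type optimization that the paper itself invokes (it cites Tzvetkov and Dodson--L\"uhrmann--Mendelson rather than writing out a proof): Markov's inequality at level $p$, choose $p\sim \la^2/(C_0^2K^2)$ to get a Gaussian tail, and absorb the small-$\la$ regime where that choice violates $p\goe p_0$ into the constant $C_1$. The concluding step $\PP(|F|=\infty)\loe\inf_{\la>0}C_1e^{-c\la^2K^{-2}}=0$ is also fine, so nothing is missing.
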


\vskip 1.5cm

\section{Almost sure Strichartz estimates}\label{sec:str}

\vskip .5cm

\subsection{Non-radial data}
\begin{lem}\label{lem:almostsure-strichartz}
Let $s\in \R$ and $f\in H_x^s$. Suppose that the randomization $f^\om$ is defined in Definition \ref{defn:randomization}. Then, we have the following estimates:
\enu{
\item 
For any $2\loe q, r<\I$ with $\frac2q+\frac 3r\loe \frac 32$, and for any $p\goe 2$,
\EQn{\label{eq:bound-v-pro-ltqxr}
\norm{\jb{\nabla}^se^{it\De}P_Nf^\om}_{L_\om^pl_N^2L_t^q L_x^r(\Om\times 2^\N\times\R\times\R^3)}\lsm_{q,r}\sqrt{p}\norm{f}_{H_x^s}.
}
\item 
For any $p\goe2$,
\EQn{\label{eq:bound-v-pro-ltinftyx2}
	\norm{\jb{\nabla}^{s}e^{it\De}P_Nf^\om}_{L_\om^pl_N^2L_t^\I L_x^2(\Om\times 2^\N\times\R\times\R^3)}\lsm\sqrt{p}\norm{f}_{H_x^s}.
}
\item 
For any $2\loe q<\I$ and $p\goe 2$,
\EQn{\label{eq:bound-v-pro-ltqxinfty}
\norm{\jb{\nabla}^{s-}e^{it\De}P_Nf^\om}_{L_\om^pl_N^2L_t^q L_x^\I(\Om\times 2^\N\times\R\times\R^3)}\lsm_{q}\sqrt{p}\norm{f}_{H_x^s}.
}
\item 
For any $2< r\loe\I$ and $p\goe 2$,
\EQn{\label{eq:bound-v-pro-ltinftyxr}
\norm{\jb{\nabla}^{s-}e^{it\De}P_Nf^\om}_{L_\om^pl_N^2L_t^\I L_x^r(\Om\times 2^\N\times\R\times\R^3)}\lsm_{r}\sqrt{p}\norm{f}_{H_x^s}.
}
} 
\end{lem}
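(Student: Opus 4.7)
The plan is to implement the standard ``large deviation $+$ unit-frequency Bernstein $+$ Minkowski'' scheme for Wiener-randomized data, reducing each of the four estimates to a deterministic Strichartz bound on an individual $\bx_k P_N f$. The basic observation is that for fixed $(t,x,N)$, $\jb{\nabla}^s e^{it\De} P_N f^\om(x) = \sum_{k:|k|\sim N} g_k(\om)\,\jb{\nabla}^s e^{it\De} \bx_k P_N f(x)$ is a linear combination of the independent complex Gaussians $\{g_k\}$, so Lemma \ref{lem:large-deviation} gives
\EQ{
\normb{\jb{\nabla}^s e^{it\De} P_N f^\om(x)}_{L_\om^p} \lsm \sqrt{p}\,\brkb{\sum_k \absb{\jb{\nabla}^s e^{it\De} \bx_k P_N f(x)}^2}^{1/2}
}
for every $p\goe 2$. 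On the deterministic side, $\bx_k P_N f$ has Fourier support in a set of diameter $O(1)$ lying inside $\{|\xi|\sim N\}$ (and vanishes unless $|k|\sim N$), so unit-frequency Bernstein $\norm{\bx_k u}_{L_x^\rho}\lsm \norm{\bx_k u}_{L_x^{\rho_0}}$ for $\rho\goe \rho_0$ combined with Lemma \ref{lem:strichartz} gives $\norm{\jb{\nabla}^s e^{it\De} \bx_k P_N f}_{L_t^q L_x^r}\lsm N^s \norm{\bx_k P_N f}_{L_x^2}$ whenever there is an $L_x^2$-admissible pair $(q,r_0)$ with $r_0\loe r$, i.e.\ whenever $\frac{2}{q}+\frac{3}{r}\loe\frac{3}{2}$.

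For part (1), the hypothesis $p\goe \max(q,r)\goe 2$ lets me pull $L_\om^p$ inside $\ell_N^2 L_t^q L_x^r$ via Minkowski, apply the pointwise large-deviation bound above, then pull the $\ell_k^2$ square function back outside $\ell_N^2 L_t^q L_x^r$ (this second Minkowski step is legitimate since every inner exponent is $\goe 2$). Combining with the deterministic estimate and the almost orthogonality of $\{\bx_k\}_{k\in\Z^3}$ together with the Littlewood--Paley square function estimate collapses the $(N,k)$-sum to $\sqrt{p}\,\norm{f}_{H_x^s}$. Part (2) is the same argument but simpler: $L^2$-unitarity of $e^{it\De}$ makes the $L_t^\I L_x^2$ norm independent of $t$, so the Strichartz step is unnecessary and only $p\goe 2$ is needed.

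Parts (3) and (4) are the same scheme preceded by a Sobolev embedding that removes the $L_x^\I$ or $L_t^\I$ endpoint, which is where the $\ep$-derivative loss and the threshold $p_0$ come from. For (3), the embedding $W_x^{3/M+\ep_0, M}\hra L_x^\I$ for any finite $M$ reduces $L_t^q L_x^\I$ to $L_t^q L_x^M$; I choose $M=M(q,\ep)$ large enough that both $\frac{2}{q}+\frac{3}{M}\loe\frac{3}{2}$ and $3/M+\ep_0\loe\ep$, and then part (1) at regularity $s-\ep+3/M+\ep_0\loe s$ closes the argument with $p_0=M$. For (4), I first apply the same spatial Sobolev step when $r=\I$, then handle the $L_t^\I$ endpoint by partitioning $\R$ into intervals $I_j$ of length $\sim N^{-2}$ on which the $P_N$-localized solution is essentially frozen; a one-dimensional Sobolev inequality on each $I_j$, combined with $\pd_t e^{it\De}=i\De e^{it\De}\sim N^2$ on $P_N$-localized data, produces $\norm{u}_{L_t^\I(I_j)}\lsm N^{2/p'}\norm{u}_{L_t^{p'}(I_j)}$, and the supremum over $j$ is dominated by the global $L_t^{p'}(\R)$ norm. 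The $N^{2/p'}$ factor is absorbed by the $\ep$-derivative loss provided $p'\goe 2/\ep$, after which part (1) with the pair $(p',r)$ closes the estimate. The main obstacle I expect is precisely this bookkeeping in part (4): since $e^{it\De}f^\om$ does not decay in any $L_t^{p'}$ norm globally, one has to exploit the $P_N$ frequency localization to trade one time derivative for $N^2$ on intervals of length $N^{-2}$, and then track the resulting $N^\ep$-type factors carefully so that the final $\ell_N^2$-sum still converges to $\norm{f}_{H_x^s}$.
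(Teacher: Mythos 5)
Your parts (1)--(3) run along exactly the same lines as the paper: Minkowski to bring $L_\om^p$ inside, the large deviation estimate, Minkowski again to bring $\ell_k^2$ back outside (valid because every inner exponent is $\goe 2$ when $p\goe \max\{q,r\}$), then the unit-scale Bernstein inequality to lower the spatial exponent to an $L^2$-admissible one, Strichartz, and finally orthogonality over $(N,k)$; and (3) is just (1) preceded by a spatial Sobolev embedding $W_x^{2\ep,3/\ep}\hra L_x^\I$, which is what you do with $M=3/\ep$. Part (2) as you say just uses $L^2$-unitarity of the propagator.

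Part (4) is where your route genuinely diverges from the paper's. The paper first applies the spatial Sobolev embedding, then swaps to $L_x^{3/\ep}L_t^\I$ by Minkowski, applies the \emph{fractional} time Sobolev embedding $W_t^{2\ep,1/\ep}\hra L_t^\I$, and then uses that $\jb{\pd_t}^{2\ep}e^{it\De}P_N$ is comparable to $\jb{\nabla}^{4\ep}e^{it\De}P_N$ (since $\pd_t = i\De$ on the linear flow), before swapping back and running the part (1)/(3) machinery. You instead use the \emph{integer} version of the same heuristic: partition $\R$ into intervals of length $\sim N^{-2}$, apply a one-dimensional Sobolev/Bernstein estimate on each, and use $\pd_t = i\De\sim N^2$ to absorb the time derivative, then dominate the supremum over intervals by the global $L_t^{p'}$-norm. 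Both approaches rest on the same underlying fact — time derivatives cost $N^2$ on $P_N$-localized free solutions — and both work. The paper's version is slicker in that it avoids the interval bookkeeping, at the cost of the pseudodifferential multiplier observation; yours is more elementary but needs a bit of care with the mixed-norm order (the cleanest realization of your step is to apply the 1D Sobolev to the scalar function $t\mapsto\norm{u(t)}_{L_x^r}$, whose derivative is bounded by $\norm{\pd_t u(t)}_{L_x^r}\lsm N^2\norm{u(t)}_{L_x^r}$ via spatial Bernstein, rather than literally applying a pointwise-in-$x$ Sobolev inequality as you wrote). With that reading your argument is correct and delivers the same $\ep$-derivative loss and threshold $p_0$ as the paper.
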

\begin{remark}
We remark that for example, by Minkowski's inequality and Lemma \ref{lem:littlewood-paley}, \eqref{eq:bound-v-pro-ltqxr} also gives that
\EQ{
	\norm{\jb{\nabla}^se^{it\De}f^\om}_{L_\om^pL_t^q L_x^r(\Om\times\R\times\R^3)}\lsm\sqrt{p}\norm{f}_{H_x^s}.
}
\end{remark}
\begin{proof}
In the proof of this lemma, we restrict the variables on $\om\in\Om$, $N\in2^\N$, $t\in\R$, $x\in\R^3$, and $k\in\Z^3$. The implicit constant $C(q,r)$ may depend on some fixed exponents $q,r$, and we abbreviate it to $C$ for short.

We first prove \eqref{eq:bound-v-pro-ltqxr}. Let $p_0=\max\fbrk{q,r}$. If $2\loe p\loe p_0$, then $p\sim p_0\sim 1$. By H\"older's inequality in $\om$, Minkowski's inequality, and Lemma \ref{lem:large-deviation}, 
\EQ{
\norm{\jb{\nabla}^se^{it\De}P_Nf^\om}_{L_\om^pl_N^2L_t^q L_x^r}\lsm & \norm{\jb{\nabla}^se^{it\De}P_Nf^\om}_{l_N^2L_t^q L_x^rL_\om^{p_0}} \\
\lsm & \sqrt{p_0} \norm{\jb{\nabla}^se^{it\De}\bx_kP_Nf}_{l_N^2L_t^q L_x^rl_k^2} \\
\lsm & \sqrt{p} \norm{\jb{\nabla}^se^{it\De}P_N\bx_kf}_{l_N^2l_k^2L_t^q L_x^r}.
}
If $p\goe p_0$, simply using Minkowski's inequality and Lemma \ref{lem:large-deviation}, 
\EQ{
\norm{\jb{\nabla}^se^{it\De}P_Nf^\om}_{L_\om^pl_N^2L_t^q L_x^r}\lsm & \norm{\jb{\nabla}^se^{it\De}P_Nf^\om}_{l_N^2L_t^q L_x^rL_\om^{p}} \\
\lsm & \sqrt{p} \norm{\jb{\nabla}^se^{it\De}\bx_kP_Nf}_{l_N^2L_t^q L_x^rl_k^2} \\
\lsm & \sqrt{p} \norm{\jb{\nabla}^se^{it\De}P_N\bx_kf}_{l_N^2l_k^2L_t^q L_x^r}.
}
Therefore, we have that for any $p\goe 2$,
\EQn{\label{esti:bound-v-pro-ltqxr-1}
\norm{\jb{\nabla}^se^{it\De}P_Nf^\om}_{L_\om^pl_N^2L_t^q L_x^r}
\lsm & \sqrt{p} \norm{\jb{\nabla}^se^{it\De}P_N\bx_kf}_{l_N^2l_k^2L_t^q L_x^r}.
}
Now, let $2\loe r_0\loe r$ such that $(q,r_0)$ is $L_x^2$-admissible. For any $k\in\Z^3$, by the support property of $\psi_k$ and  Bernstein's inequality, we have
\EQn{\label{esti:bound-v-pro-ltqxr-2}
\norm{\jb{\nabla}^se^{it\De}P_N\bx_kf}_{L_t^q L_x^r}\lsm\norm{\jb{\nabla}^se^{it\De}P_N\bx_kf}_{L_t^q L_x^{r_0}}.
}
Then, by \eqref{esti:bound-v-pro-ltqxr-1}, \eqref{esti:bound-v-pro-ltqxr-2}, Lemma \ref{lem:strichartz}, and orthogonality, we have
\EQ{
\norm{\jb{\nabla}^se^{it\De}P_Nf^\om}_{L_\om^pl_N^2L_t^q L_x^r}\lsm & \sqrt{p} \norm{\jb{\nabla}^se^{it\De}P_N\bx_kf}_{l_N^2l_k^2L_t^q L_x^{r_0}} \\
\lsm & \sqrt{p} \norm{\jb{\nabla}^sP_N\bx_kf}_{l_N^2l_k^2 L_x^2}\lsm \sqrt{p} \norm{f}_{H_x^s}.
}
This gives \eqref{eq:bound-v-pro-ltqxr}.

Next, we prove \eqref{eq:bound-v-pro-ltinftyx2}. By Plancherel's identity, we have
\EQn{\label{esti:bound-v-pro-ltinftyx2-1}
	\norm{\jb{\nabla}^{s}e^{it\De}P_Nf^\om}_{L_\om^pl_N^2L_t^\I L_x^2}\lsm \norm{\jb{\nabla}^{s}P_Nf^\om}_{L_\om^pl_N^2 L_x^2}\lsm \norm{\jb{\nabla}^{s}f^\om}_{L_\om^p L_x^2}.
}
Then, by Minkowski's inequality and Lemma \ref{lem:large-deviation},
\EQn{\label{esti:bound-v-pro-ltinftyx2-2}
	\norm{\jb{\nabla}^{s}f^\om}_{L_\om^p L_x^2}\lsm & \norm{\jb{\nabla}^{s}f^\om}_{L_x^2 L_\om^p} \\
	\lsm & \sqrt{p} \norm{\jb{\nabla}^{s}\bx_k f}_{L_x^2 l_k^2}\lsm \sqrt{p}\norm{f}_{H_x^s}.
}
Then, \eqref{esti:bound-v-pro-ltinftyx2-1} and \eqref{esti:bound-v-pro-ltinftyx2-2} imply \eqref{eq:bound-v-pro-ltinftyx2}.

We then prove \eqref{eq:bound-v-pro-ltqxinfty}. Let $0<\ep\ll 1$. Using the Sobolev's embedding $ W_x^{2\ep,\frac3\ep} \hra L_x^\I$ in $x$, we have
\EQn{\label{esti:bound-v-pro-ltqxinfty-1}
	\norm{\jb{\nabla}^{s-2\ep}e^{it\De}P_Nf^\om}_{L_\om^pl_N^2L_t^q L_x^\I}\lsm \norm{\jb{\nabla}^{s}e^{it\De}P_Nf^\om}_{L_\om^pl_N^2L_t^q L_x^{\frac3\ep}}.
}
By \eqref{esti:bound-v-pro-ltqxinfty-1} and \eqref{eq:bound-v-pro-ltqxr}, we have that \eqref{eq:bound-v-pro-ltqxinfty} holds.

Finally, we prove \eqref{eq:bound-v-pro-ltinftyxr}. We only consider the $r=\I$ case. In fact, when $r<\I$, we can prove it using interpolation between \eqref{eq:bound-v-pro-ltinftyx2} and the $r=\I$ case. Let $0<\ep\ll1$. Using Minkowski's inequality, the Sobolev's embedding $W_t^{2\ep,\frac1\ep}\hra L_t^\I$ in $t$ and $ W_x^{2\ep,\frac3\ep} \hra L_x^\I$ in $x$, we have
\EQn{\label{esti:bound-v-pro-ltinftyxr-1}
	\norm{\jb{\nabla}^{s-6\ep}e^{it\De}P_Nf^\om}_{L_\om^pl_N^2L_t^\I L_x^\I}
	\lsm & 	\norm{\jb{\nabla}^{s-4\ep}e^{it\De}P_Nf^\om}_{L_\om^pl_N^2L_t^\I L_x^{\frac{3}{\ep}}} \\
	\lsm & \norm{\jb{\nabla}^{s-4\ep}\jb{\pd_t}^{2\ep}e^{it\De}P_Nf^\om}_{L_\om^pl_N^2 L_x^{\frac{3}{\ep}} L_t^{\frac1\ep}}\\
	\lsm & \norm{\jb{\nabla}^{s}e^{it\De}P_Nf^\om}_{L_\om^pl_N^2 L_x^{\frac{3}{\ep}} L_t^{\frac1\ep}}\\
	\lsm & \norm{\jb{\nabla}^{s}e^{it\De}P_Nf^\om}_{L_\om^pl_N^2  L_t^{\frac1\ep} L_x^{\frac{3}{\ep}}}.
}
Then, it follows from \eqref{esti:bound-v-pro-ltinftyxr-1} and \eqref{eq:bound-v-pro-ltqxr} that \eqref{eq:bound-v-pro-ltinftyxr} holds.
\end{proof}

Next, we gather all the space-time norms that will be used below. Let $\frac16\loe s<\frac12$, and given any suitably small constant $\ep>0$. Define the $Y^s(I)$ space by its norm
\EQn{\label{defn:ys-norm}
	\norm{v}_{Y^s(I)}:=&\normb{\jb{\nabla}^{s} v_{ N}}_{l_{N}^2 L_t^2 L_x^{6}\cap l_N^2L_t^{\frac1\ep} L_x^{\frac{6}{3-4\ep}}(2^\N\times I\times\R^3)} 
	+ \normb{\jb{\nabla}^{s}v_N}_{l_N^2L_t^{2} L_x^{1/\ep} \cap l_N^2L_{t,x}^{1/\ep}(2^\N\times I\times\R^3)}\\
	& + \normb{\jb{\nabla}^{s-\ep}v_N}_{l_N^2L_t^{2} L_x^{\I} \cap l_N^2L_{t}^{1/\ep} L_x^{\I}(2^\N\times I\times\R^3)},
}
and the $Z$-norm by
\EQn{\label{defn:zs-norm}
\norm{v}_{Z^s(I)}:=\norm{\jb{\nabla}^{s-\ep}v_N}_{l_N^2L_t^\I L_x^\I(2^\N\times I\times\R^3)} + \norm{\jb{\nabla}^{s}v_N}_{l_N^2L_t^\I L_x^2(2^\N\times I\times\R^3)}.
}
Note that $(2,6)$ and $(1/\ep,6/(3-4\ep))$ are $L_x^2$-admissible. 

Since $\ep$ is a fixed small constant, by Lemmas \ref{lem:almostsure-strichartz} and \ref{lem:probability-estimate}, we have
\begin{cor}\label{cor:Y-norm-Z-norm}
Let $\frac16\loe s<\frac12$ and $f\in H_x^s$. Then, there exist constants $C,c>0$ such that for any $\la$,
\EQ{
	\PP\brkb{\fbrkb{\om\in \Om:\norm{e^{it\De}f^\om}_{Y^s(\R)} + \norm{e^{it\De}f^\om}_{Z^s(\R)}>\la}}\loe C\exp\fbrkb{-c\la^2\norm{f}_{H_x^{s}(\R^3)}^{-2}}.
}
Moreover,  
\EQ{
\norm{e^{it\De}f^\om}_{Y^s(\R)} + \norm{e^{it\De}f^\om}_{Z^s(\R)} <+\I,\quad \mbox{a.e.} \>\>\om\in\Om.
}
\end{cor}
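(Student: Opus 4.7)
The plan is to bound each of the summand norms appearing in \eqref{defn:ys-norm} and \eqref{defn:zs-norm} by a quantity of the form $\sqrt{p}\,\|f\|_{H_x^s}$ in $L_\omega^p$ for all sufficiently large $p$, and then feed the total estimate into Lemma \ref{lem:probability-estimate} to obtain the Gaussian tail bound; almost-sure finiteness follows by letting $\lambda \to \infty$ and applying Borel--Cantelli (or simply the second conclusion of Lemma \ref{lem:probability-estimate}).

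The key observation is that each space-time norm appearing in the definitions of $Y^s(\R)$ and $Z^s(\R)$ falls into one of the four cases of Lemma \ref{lem:almostsure-strichartz}. For the $L_t^q L_x^r$ pieces with $r < \infty$, I will check the constraint $\frac{2}{q} + \frac{3}{r} \leq \frac{3}{2}$ required by \eqref{eq:bound-v-pro-ltqxr}. For example, $(q,r)=(4,\tfrac{9}{2})$ gives $\tfrac12+\tfrac23=\tfrac76$; $(6,6)$ gives $\tfrac56$; $(4,18)$ gives $\tfrac23$; $(\tfrac{20}{7},10)$ gives $1$; $(4,12)$ gives $\tfrac34$; $(4,6)$ gives $1$; $(3,6)$ gives $\tfrac76$; the two $\epsilon$-perturbed pairs give values of $\tfrac76-O(\epsilon)$ and $1$, respectively. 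All satisfy the admissibility threshold. For the $L_t^3 L_x^\infty$ norm appearing in $Y^s(\R)$ I apply \eqref{eq:bound-v-pro-ltqxinfty} (observing that $v$ there is estimated without a derivative, so the $s-$ loss is harmless). The first norm in the $Z^s$ definition falls under \eqref{eq:bound-v-pro-ltinftyxr} (with $r=\infty$, and the $s-$ loss is built into the definition), while the second is exactly \eqref{eq:bound-v-pro-ltinftyx2}.

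For the terms of $Y^s(\R)$ that are not already written as an $\ell_N^2$ Littlewood--Paley sum — namely $\|v\|_{L_t^4 L_x^{12}}$, $\|v\|_{L_t^3 L_x^\infty}$, $\|v\|_{L_t^4 L_x^6}$, $\|v\|_{L_t^3 L_x^6}$, and the two $\epsilon$-perturbed norms — I will insert a Littlewood--Paley decomposition, use Minkowski's inequality to exchange the $\ell_N^2$ with the $L_x^r$ and $L_\omega^p$ norms (possible since $p,r\geq 2$), and invoke Lemma \ref{lem:littlewood-paley} to bound $\|v_N\|_{L_x^r \ell_N^2} \sim \|v\|_{L_x^r}$; this reduces each such norm to the frequency-localized estimate already furnished by Lemma \ref{lem:almostsure-strichartz}. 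Summing the finitely many resulting $L_\omega^p$ bounds yields
\EQ{
\|e^{it\Delta}f^\omega\|_{Y^s(\R)} + \|e^{it\Delta}f^\omega\|_{Z^s(\R)} \lesssim \sqrt{p}\,\|f\|_{H_x^s}
}
in $L_\omega^p$ for all $p \geq p_0$ with some fixed $p_0$. Applying Lemma \ref{lem:probability-estimate} with $F(\omega)$ equal to the left-hand side and $K = \|f\|_{H_x^s}$ produces the claimed constants $C,c>0$ and the tail estimate, and the almost-sure finiteness is the second conclusion of that lemma.

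I do not anticipate a genuine obstacle; the only mildly delicate point is keeping track of the $s-$ losses in \eqref{eq:bound-v-pro-ltqxinfty} and \eqref{eq:bound-v-pro-ltinftyxr}, which must be absorbed into the $\langle\nabla\rangle^{s-}$ that already appears in the definition of $Z^s(\R)$, so no additional regularity of $f$ is consumed beyond $H_x^s$.
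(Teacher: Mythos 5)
Your proposal is correct and takes the same route as the paper, which simply asserts that the corollary follows from Lemma \ref{lem:almostsure-strichartz} and Lemma \ref{lem:probability-estimate} without spelling out the details; you have filled in precisely the missing bookkeeping. Your admissibility checks of the pairs $(q,r)$ appearing in \eqref{defn:ys-norm} are all correct, your use of Littlewood--Paley plus Minkowski (valid since $p,q,r\geq 2$) to handle the norms of $Y^s$ that are not already in $\ell_N^2$ form matches the remark the paper inserts after Lemma \ref{lem:almostsure-strichartz}, your treatment of the $L_t^3L_x^\infty$ term by trading the $\langle\nabla\rangle^{s-}$ gain against a Cauchy--Schwarz sum in $N$ (using $s\geq\tfrac16>0$) is sound, and the final application of Lemma \ref{lem:probability-estimate} with $K=\|f\|_{H_x^s}$ gives both the tail bound and the almost-sure finiteness exactly as claimed.
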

Finally, we remark that in the following, we will frequently using the inequalities without mentioning: for any $2\loe q,r\loe \frac1\ep$ with $\frac2q +\frac 3r\loe \frac32$,
\EQ{
	\norm{\jb{\nabla}^sv}_{L_t^q L_x^r} \lsm \norm{v}_{Y^s},
}
and for any $2\loe q \loe 1/\ep$,
\EQ{
	\norm{\jb{\nabla}^{s-\ep}v}_{L_t^q L_x^\I} \lsm \norm{v}_{Y^s}.
}

\subsection{Radial data}
Here, we derive a super-critical estimate for the randomized radial data that can acquire $\frac12$-derivative. Such class of estimates was first proved by Dodson, L\"uhrmann, and Mendelson  \cite{DLM19Adv} in 4D case, based on their ``radialish" Sobolev's inequality \cite{DLM20AJM}. We adapt their method to the 3D case:
\begin{prop}\label{prop:nabla2infty}
Let $4<r\loe \I$, $s_0\in\R$. Suppose that $f\in H_x^{s_0}(\R^3)$ is radial. Then for any  $s<s_0 + \half 1$, there exist constants $C,c>0$ such that for any $\la>0$,
\EQn{\label{eq:nabla2infty-pro}
\PP\brkb{\fbrkb{\om\in \Om:\normb{\jb{\nabla}^{s}e^{it\De}f^\om}_{L_t^2 L_x^{r}(\R\times \R^3)}>\la}}\loe C\exp\fbrkb{-c\la^2\norm{f}_{H_x^{s_0}(\R^3)}^{-2}}.
}
Moreover, 
\EQn{\label{eq:nabla2infty}
\normb{\jb{\nabla}^{s}e^{it\De}f^\om}_{L_t^2 L_x^{r}(\R\times \R^3)}<\I,\quad \mbox{a.e.} \>\>\om\in\Om.
}
\end{prop}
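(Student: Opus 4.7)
The plan is to adapt the 4D argument of Dodson--L\"uhrmann--Mendelson \cite{DLM20AJM,DLM19Adv} to the 3D setting. By Lemma \ref{lem:probability-estimate}, both \eqref{eq:nabla2infty-pro} and the almost-sure finiteness \eqref{eq:nabla2infty} follow once we establish the $L^p_\om$ moment bound
\EQ{
	\norm{|\nabla|^{s} e^{it\De} f^\om}_{L_\om^p L_t^2 L_x^{r}(\Om\times\R\times\R^3)} \lsm \sqrt{p}\,\norm{f}_{H^{s_0}(\R^3)}
}
for all sufficiently large $p$. Expanding $f^\om=\sum_{k\in\Z^3} g_k(\om)\,\bx_k f$, Minkowski's inequality (valid for $p\goe r$) together with the large deviation estimate \eqref{eq:large-deviation}, followed by another Minkowski step (valid since $r\goe 2$) to pass inside the $\ell^2_k$ summation, and a Littlewood--Paley decomposition $f=\sum_N P_N f$, reduce the task to the frequency-localized square function bound
\EQ{
	\normbb{\brkb{\sum_{k\in\Z^3}|e^{it\De}\bx_k P_N f|^2}^{1/2}}_{L_t^2 L_x^{r}(\R\times\R^3)} \lsm N^{-\frac12+}\,\norm{P_N f}_{L_x^2},
}
since a factor $N^{s}$ arises from $|\nabla|^s$ localized at frequency $N$, and summing in $\ell^2_N$ converges whenever $s<s_0+\frac12$.

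The central ingredient is the \emph{radialish} structure of the square function $H_N^{1/2}(t,x):=(\sum_k|e^{it\De}\bx_k P_N f|^2)^{1/2}$. Although no individual $\bx_k f$ is radial, the radiality of $f$ forces $H_N^{1/2}(t,\cdot)$ to be pointwise comparable to its spherical average, so that on each dyadic annulus $A_R:=\{|x|\sim R\}$ it enjoys the radial Sobolev improvement with 3D exponent $R^{-(d-1)/2}=R^{-1}$. Combining this radialish Sobolev inequality (following \cite{DLM20AJM}) with Lemma~\ref{lem:local-smoothing} applied to each $e^{it\De}\bx_k P_N f$ and summed in $\ell^2_k$ gives the two basic bounds
\EQ{
	\norm{H_N^{1/2}}_{L_t^2 L^2_{|x|\lsm R}} \lsm R^{1/2}\,N^{-\frac12}\,\norm{P_N f}_{L_x^2}, \qquad
	\norm{H_N^{1/2}}_{L_t^2 L^\I_{A_R}} \lsm R^{-1}\,N^{-\frac12+}\,\norm{P_N f}_{L_x^2},
}
where the second bound uses the $R^{-1}$ radial gain coupled to the local smoothing $\frac12$-derivative gain. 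Interpolating on each annulus,
\EQ{
	\norm{H_N^{1/2}}_{L_t^2 L^{r}_{A_R}} \loe \norm{H_N^{1/2}}_{L_t^2 L^\I_{A_R}}^{1-2/r}\,\norm{H_N^{1/2}}_{L_t^2 L^2_{A_R}}^{2/r} \lsm R^{3/r-1}\,N^{-\frac12+}\,\norm{P_N f}_{L_x^2},
}
we find that the spatial scaling $R^{3/r-1}$ is strictly negative under the hypothesis $r>4$. The dyadic sum in $R$ is then absolutely convergent by Minkowski (valid since $r\goe 2$), yielding the desired square function bound at each $N$ and closing the argument after the $\ell^2_N$-summation.

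The main obstacle is the rigorous establishment of the radialish Sobolev inequality for $H_N^{1/2}$ in 3D and its proper coupling with local smoothing so that the $L_t^2 L^\I_{A_R}$-norm simultaneously acquires the $R^{-1}$ spatial decay and only $N^{-1/2+}$ in frequency (rather than the much worse $N^{1/2}$ that a naive Bernstein step would produce). This is where the radiality of $f$ is indispensable. The strict inequality $s<s_0+\frac12$ in the conclusion absorbs the logarithmic $+$ losses coming from the radialish Sobolev application and from the $\ell^2_N$-summation.
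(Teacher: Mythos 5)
Your overall plan—reduce to a moment bound via Lemma \ref{lem:probability-estimate} and the large deviation estimate, pass to a frequency-localized square function, and prove a weighted estimate on dyadic annuli by interpolating a radialish Sobolev gain against local smoothing—is indeed the same strategy the paper follows (adapting \cite{DLM20AJM,DLM19Adv} to 3D). However, the argument as written has a genuine gap, and a concrete error in the exponents.

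The error: your second claimed bound
\EQ{
	\norm{H_N^{1/2}}_{L_t^2 L^\I_{A_R}} \lsm R^{-1}\,N^{-\frac12+}\,\norm{P_N f}_{L_x^2}
}
is too strong. The radial Sobolev inequality (Lemma \ref{lem:radialsobolev} with $r=\I$) is a \emph{fixed-time} spatial estimate: applied naively to $e^{it\De}P_N f$ it produces $R^{-1}\norm{e^{it\De}P_N f}_{H_x^\epsilon}$, which is $t$-independent, so the $L_t^2$-norm diverges. To salvage this one must apply it instead to the spatially truncated function $\chi_{\lesssim R}\,e^{it\De}P_N f$ (still radial), and only then does local smoothing on the ball of radius $R$ give the $N^{-1/2}$ gain, at the cost of a factor $R^{1/2}$. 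The correct bound is therefore
\EQ{
	\norm{H_N^{1/2}}_{L_t^2 L^\I_{A_R}} \lsm R^{-\frac12}\,N^{-\frac12+}\,\norm{P_N f}_{L_x^2},
}
and interpolation against the $L_t^2 L^2$ local-smoothing bound then yields the annular factor $R^{-\frac12+\frac2r}$, not $R^{\frac3r-1}$. This is why the proposition needs $r>4$, not $r>3$ as your exponents would suggest—matching the factor $2^{-(\frac12-\frac2r)j}$ in the paper's estimate \eqref{esti:nabla2infty-1-2}.

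The gap: you explicitly identify the coupling of the radialish Sobolev inequality with local smoothing as "the main obstacle," but that coupling is precisely the content of the proof—you are asserting the conclusion of the hard step without supplying it. Beyond the exponent correction above, there is a further technical issue you don't confront: the unit-scale Fourier projections $\bx_k$ do not commute with the spatial cutoffs $\chi_j$, so decomposing $\chi_j\bx_k = \chi_j\bx_k\chi_{\leq j+5} + \chi_j\bx_k\chi_{>j+5}$ and handling the "far" commutator tail via the mismatch estimates (Lemma \ref{lem:mismatch}) is indispensable; it occupies roughly half the paper's proof (the term \eqref{esti:nabla2infty-2}) and is entirely absent from your sketch. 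Finally, the application of Lemma \ref{lem:radialsobolev} requires radiality of the argument; one must check that the truncated and frequency-reprojected object (e.g. $P_{\sim N}\chi_{\lesssim R}\,e^{it\De}P_N f$) retains it—straightforward but not free.
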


To prove Proposition \ref{prop:nabla2infty}, we need the following  3D version of radial Sobolev estimate for the square function.
\begin{lem}\label{lem:radialsobolev}
Suppose that the function $f$ is radial and that $2\loe r\loe\I$. Then, for any $\epsilon>0$, there exists $C_\epsilon>0$ such that,
\EQn{\label{eq:radialsobolev}
\normb{|x|^{1-\frac 2r}\norm{f_k}_{l_k^2}}_{L_x^{r}(\R^3)}\loe C_\epsilon \norm{f}_{H_x^{\epsilon}(\R^3)}.
}
\end{lem}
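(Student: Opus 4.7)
The proof proceeds by establishing the endpoints $r=2$ and $r=\infty$ and interpolating. The $r=2$ endpoint is immediate from Plancherel and the bounded overlap $\sum_k\psi_k^2\sim 1$: $\normb{\normb{f_k}_{l_k^2}}_{L_x^2}^2 = \sum_k\normb{f_k}_{L^2}^2 \sim \norm{f}_{L^2}^2 \loe \norm{f}_{H^\ep}^2$.

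The core of the argument is the $r=\infty$ endpoint, where the goal is $|x|\,\normb{f_k(x)}_{l_k^2}\lsm_\ep \norm{f}_{H^\ep}$ under the radiality hypothesis on $f$. Since $\wh f$ is then radial, I would write $\wh f(\xi)=g(|\xi|)$ and expand in polar coordinates $\xi=\rho\eta$: $f_k(R\omega)=\int_0^\infty \rho^2 g(\rho) J_k(\rho,R,\omega)\,d\rho$, where $J_k(\rho,R,\omega):=\int_{S^2}e^{iR\rho\omega\cdot\eta}\psi(\rho\eta-k)\,d\eta$. The integrand of $J_k$ is supported on a spherical cap of angular radius $\sim 1/|k|$ about $\wh k:=k/|k|$, and this forces $\rho\sim|k|$. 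Balancing the trivial area bound $|J_k|\lsm \rho^{-2}$ against 2D stationary phase (when $\omega$ lies near $\pm\wh k$) and non-stationary integration by parts in $\eta$ on $S^2$ (when $\omega$ is separated from $\pm\wh k$) produces a pointwise estimate of the form $|f_k(R\omega)|\lsm |g(|k|)|(1+R\,\angle(\omega,\wh k))^{-M}$ for any $M\goe 1$, improved by an additional factor $|k|/R$ in the stationary regime $\omega\approx\pm\wh k$ with $R\goe |k|$.

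Grouping $k$ into dyadic shells $K_N:=\fbrko{k\in\Z^3:|k|\sim N}$ and summing over angular rings $\angle(\omega,\wh k)\sim 2^j/N$, each containing $\sim 2^{2j}$ lattice points, the combined contribution satisfies $\sum_{k\in K_N}|f_k(R\omega)|^2 \lsm g(N)^2 N^2/R^2 \sim N^{-1}R^{-2}\norm{P_{\sim N}f}_{L^2}^2$. Summing over $N\in 2^\N$ using $\norm{P_{\sim N}f}_{L^2}^2\loe N^{-2\ep}\norm{f}_{H^\ep}^2$, and handling the low-frequency piece $|k|\lsm 1$ by a direct Bernstein-type argument, gives $R^2\sum_k |f_k(R\omega)|^2 \lsm_\ep \norm{f}_{H^\ep}^2\sum_{N\in 2^\N} N^{-1-2\ep}\lsm_\ep \norm{f}_{H^\ep}^2$ uniformly in $R$ and $\omega$; real interpolation between $r=2$ and $r=\infty$ then yields \eqref{eq:radialsobolev}. \emph{Main obstacle:} The delicate step is the uniform stationary/non-stationary phase analysis of $J_k$ matched with a careful lattice-point count on spherical shells; radiality is essential since it concentrates the Fourier support of each $f_k$ within a small cap about $\wh k$, thereby converting oscillation of $e^{iR\omega\cdot\xi}$ into polynomial decay in $R\,\angle(\omega,\wh k)$, a gain that would be unavailable without the radial hypothesis.
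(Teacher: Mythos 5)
Your approach is genuinely different from the paper's, and if it worked in full would be strictly sharper. The paper's proof is much more elementary: it integrates by parts exactly once in the polar angle $\theta$ to extract a factor $1/|x|$, and then simply estimates the remaining integral by the size of the angular support of $\psi_k$ (namely $|\theta-\theta_k|\lsm 1/|k|$, $|\alpha-\alpha_k|\lsm 1/\jb{|k|\sin\theta_k}$) together with Cauchy--Schwarz in $\rho$, arriving at the crude pointwise bound $|f_k|\lsm |x|^{-1}\jb{|k|\sin\theta_k}^{-1}\norm{\wh f(\rho)\rho}_{L^2_{|\rho-|k||\lsm 1}}$. Summing $\jb{|k|\sin\theta_k}^{-2}$ over the shell $|k|\sim N$ then costs exactly a $\ln N$, which is what the $\epsilon>0$ absorbs. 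Your two-dimensional stationary/non-stationary phase analysis on $S^2$ is a refinement of the same idea, pushing the oscillation much harder.

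That said, the proposal as written has gaps that prevent it from being accepted. First, the pointwise quantity $|g(|k|)|$ makes no sense for $f$ merely in $H^\ep$: $\wh f$ need not be continuous, and after separating the $\rho$-integral (over $|\rho-|k||\lsm 1$) from $J_k$ by Cauchy--Schwarz, the quantity that actually appears is $\norm{\wh f(\rho)\rho}_{L^2_{|\rho-|k||\lsm 1}}$, as in the paper. Once this is done, your later passage $g(N)^2 N^3\sim\norm{P_{\sim N}f}_{L^2}^2$ also needs reworking: the $L^2$-averages over unit windows $|\rho-|k||\lsm 1$ must be summed over the $\sim N$ integer radii inside a dyadic block, and this bookkeeping interacts non-trivially with your lattice count (an angular ring $\angle(\hat k,\omega)\sim 2^j/N$ in the full dyadic shell $K_N$ contains $\sim N\cdot 2^{2j}$ points, not $\sim 2^{2j}$; you seem to be implicitly using a unit-thickness shell, which is fine, but then the sum over the $\sim N$ such shells must be carried out explicitly). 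Second, and most tellingly, your final display proves $R^2\sum_k|f_k(R\omega)|^2\lsm\sum_N N^{-1}\norm{P_{\sim N}f}_{L^2}^2\lsm\norm{f}_{L^2}^2$ --- the $N^{-2\ep}$ you insert is superfluous --- so taken at face value your argument establishes the lemma with $\epsilon=0$. The paper's own proof has a genuine $\ln N$ loss coming from $\sum_{m\lsm N}m\,\jb m^{-2}\sim\ln N$, and one can check that taking the minimum with the trivial $|f_k|\lsm\norm{\psi_k\wh f}_{L^2}$ bound near the stationary direction does not remove it. An $\ep=0$ conclusion may or may not be true, but it would be a strengthening of the lemma and cannot be inferred from a sketch; you would need to carry out the stationary/non-stationary phase estimates rigorously with $L^2$-averages in $\rho$, verify that the amplitude derivatives $\pd_\eta^m\psi(\rho\eta-k)\sim\rho^m$ really permit arbitrary-order non-stationary decay in $R\angle$ uniformly in $k$ and $\rho\in[|k|-1,|k|+1]$, and do the stationary cap count exactly, before concluding that the logarithm is absent.
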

\begin{proof}
It suffices to prove the $r=\I$ case, since the general case can be obtained by interpolation with
\EQ{
\normb{\norm{f_k}_{l_k^2}}_{L_x^{2}(\R^3)}\sim \norm{f}_{L_x^{2}(\R^3)}.
}

Since $f$ is radial, we can write $\wh f(x)=\wh f(|x|)$. Assume without loss of generality that $x=(0,0,|x|)$. Then, by integration-by-parts and the spherical coordinate
\EQ{
	\xi(\rho,\th,\al)=(\rho\sin{\th}\cos{\al}, \rho\sin{\th}\sin{\al}, \rho\cos{\th}),
} 
we have
\begin{subequations}
\EQnn{
f_k=& \int_0^{2\pi} \!\! \int_0^\pi \int_0^\infty \psi_k(\xi(\rho,\th,\al)) \wh f(\rho) e^{i|x|\rho\cos\th }\rho^2\sin \th\rm{d}\rho\rm d\th\rm d\al\nonumber\\
= & -\rev{i|x|}   \int_0^\infty\!\! \int_0^{2\pi} \!\! \int_0^\pi  \psi_k\big(\xi(\rho,\th,\al)\big) \>\pd_\th\brk{e^{i|x|\rho\cos\th }}\wh f(\rho) \rho \, d\th\mbox{d}\al\,\rm{d}\rho \nonumber\\
= & \rev{i|x|}   \int_0^\infty\!\!  \int_0^{2\pi}  \!\!\int_0^\pi  \pd_\th\big(\psi_k(\xi(\rho,\th,\al))\big) e^{i|x|\rho\cos\th }\wh f(\rho) \rho\,\rm{d}\th\rm d\al\rm{d}\rho\label{esti:radialstrichartz-1}\\
& - \rev{i|x|}   \int_0^\infty\!\!  \int_0^{2\pi}   \!\!\psi_k(\xi(\rho,\pi,\al)) e^{-i|x|\rho}\wh f(\rho) \rho\,\rm d\al\rm{d}\rho \label{esti:radialstrichartz-2}\\
& +\rev{i|x|}   \int_0^\infty\!\!  \int_0^{2\pi}\!\!   \psi_k(\xi(\rho,0,\al)) e^{i|x|\rho}\wh f(\rho) \rho\,\rm d\al\rm{d}\rho. \label{esti:radialstrichartz-3}
}
\end{subequations}

Denote that 
\EQ{
	k=(k_1,k_2,k_3)=(|k|\sin{\th_k}\cos{\al_k}, |k|\sin{\th_k}\sin{\al_k}, |k|\cos{\th_k}).
} 
By the support of $\psi_k$, we have that $\rho$ is supported on the set $\fbrk{\rho:|\rho-|k||\lsm 1}$,  $\th$ is supported on  $\fbrk{\th:|\th-\th_k|\lsm \rev{|k|}}$, 
and $\al$ is supported on $\fbrk{\al:|\al-\al_k|\lsm \rev{\langle |k|\sin \th_k\rangle}}$. Furthermore, we also have
\EQ{
\abs{\pd_\th\brk{\psi_k(\xi(\rho,\th,\al))}}\lsm \rho \abs{\nabla_\xi\psi_k(\xi(\rho,\th,\al))}.
}
Therefore, we have
\EQn{\label{esti:radialsobolev-1}
\abs{\eqref{esti:radialstrichartz-1}}\lsm & \rev{|x|}\rev{|k|}\rev{\jb{|k|\sin{\th_k}}}\normb{\wh f(\rho)\rho^2}_{L_{|\rho-|k||\lsm 1}^1}\\
\lsm &\rev{|x|}\rev{\jb{|k|\sin{\th_k}}}\normb{\wh f(\rho)\rho}_{L_{|\rho-|k||\lsm 1}^2}.
}
Similarly, we also have
\EQn{\label{esti:radialsobolev-2}
\abs{\eqref{esti:radialstrichartz-2}} + \abs{\eqref{esti:radialstrichartz-3}} \lsm \rev{|x|}\rev{\jb{|k|\sin{\th_k}}}\normb{\wh f(\rho)\rho}_{L_{|\rho-|k||\lsm 1}^2}.
}
Then, by \eqref{esti:radialsobolev-1} and \eqref{esti:radialsobolev-2}, 
\EQ{
|x|^2 \sum_{k\in \Z^3}|f_k|^2 \lsm & \sum_{k\in \Z^3} \rev{\jb{|k|\sin{\th_k}}^2}\normb{\wh f(\rho)\rho}_{L_{|\rho-|k||\lsm 1}^2}^2\\
\sim & \sum_{N\in \N} \sum_{\substack{k_1,k_2\in \Z^2\\|k_1|\loe N,|k_2|\loe N}} \rev{\jb{k_1^2+k_2^2}} \normb{\wh f(\rho)\rho}_{L_{|\rho-N|\lsm 1}^2}^2\\
\lsm &\sum_{N\in \N} \ln N \normb{\wh f(\rho)\rho}_{L_{|\rho-N|\lsm 1}^2}^2
 \lsm  \norm{f}_{H_x^{\epsilon}(\R^3)}^2.
}
This finishes the proof of \eqref{eq:radialsobolev}.
\end{proof}
We also need the following mismatch estimates concerning the commutator of $\chi_j$ and $\bx_k$. The same result was already proved in \cite{DLM19Adv} for 4D, and their argument can be easily extended to general dimensions. Therefore, we omit the details of proof.
\begin{lem}[Mismatch estimates]\label{lem:mismatch}
Let $2\loe r\loe \I$, $j,l\goe 0$ and $k,m\in \Z^3$. Suppose that $l> j+5$ and $|k-m|> 100$. Then for any integer $M>0$, we have
\EQn{\label{eq:mismatch-1}
\norm{\chi_j \bx_k \chi_l}_{L_x^2(\R^3)\ra L_x^r(\R^3)}\loe C_M 2^{-Ml},
}
and
\EQn{\label{eq:mismatch-2}
\norm{\bx_k \chi_l \bx_m}_{L_x^2(\R^3)\ra L_x^r(\R^3)}\loe C_M 2^{-Ml}|k-m|^{-M}.
}
\end{lem}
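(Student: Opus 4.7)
Both estimates reduce to integral-kernel computations exploiting the rapid decay of the Schwartz function $K_0 := \F^{-1}(\psi) \in \Sch(\R^3)$ together with the identity $\F^{-1}(\psi_k)(z) = e^{ik\cdot z}\, K_0(z)$. The decay source is physical-space support separation for \eqref{eq:mismatch-1}, and an oscillatory phase produced by the frequency mismatch combined with the spatial bump $\chi_l$ for \eqref{eq:mismatch-2}.

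\textbf{Proof of \eqref{eq:mismatch-1}.} The kernel of $\chi_j\,\bx_k\,\chi_l$ is
\EQ{
K(x,y) = \chi_j(|x|)\, e^{ik\cdot (x-y)}\, K_0(x-y)\, \chi_l(|y|).
}
On the product support, $|x|$ sits at scale $2^j$ while $|y|$ sits at scale $2^l$ with $l>j+5$, which forces $|x-y|\gsm 2^l$. Hence the Schwartz decay $|K_0(x-y)|\loe C_{M'}(1+|x-y|)^{-M'}$ gives the pointwise bound $|K(x,y)|\loe C_{M'}\, 2^{-M'l}$ for arbitrarily large $M'$. Combining this with the support volumes of $\chi_j,\chi_l$ via a Schur-type argument controls $\norm{\chi_j\bx_k\chi_l}_{L_x^2\ra L_x^r}$ by $C_{M'}\, 2^{-M'l}\cdot 2^{Cl}$ for some absolute $C$, and taking $M':=M+C$ yields \eqref{eq:mismatch-1}.

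\textbf{Proof of \eqref{eq:mismatch-2}.} Composing the kernels gives
\EQ{
K(x,y) = e^{i(k\cdot x - m\cdot y)} \int_{\R^3} e^{i(m-k)\cdot z}\, K_0(x-z)\, \chi_l(|z|)\, K_0(z-y)\,\mathrm{d}z.
}
Integrating by parts $M$ times in $z$, each step using the identity $e^{i(m-k)\cdot z} = \frac{m-k}{i|m-k|^2}\cdot \nabla_z e^{i(m-k)\cdot z}$, produces a factor $|k-m|^{-M}$. Routing every one of the $M$ derivatives onto $\chi_l(|z|)$ uses $|\partial^M \chi_l|\lsm 2^{-Ml}$ to yield the spatial decay $2^{-Ml}$ in one stroke, while the Schwartz kernels $K_0$ remain undifferentiated. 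The residual integrand is bounded by $|K_0(x-z)|\,\mathbf{1}_{|z|\sim 2^l}\,|K_0(z-y)|$, whose associated double-convolution operator is $L_x^2\ra L_x^r$ bounded by Young's inequality, since $K_0\in L^p$ for every $p$. Combining these yields \eqref{eq:mismatch-2}.

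\textbf{Main obstacle.} The only delicate point is arranging the integration by parts in \eqref{eq:mismatch-2} so that the decays $|k-m|^{-M}$ and $2^{-Ml}$ appear together rather than at each other's expense. The two mechanisms are in fact compatible: each derivative placed on $\chi_l$ contributes $2^{-l}$ in space while the IBP independently contributes $|k-m|^{-1}$, so $M$ iterations with all derivatives routed onto $\chi_l$ deliver both decays simultaneously with the Schwartz pieces of the kernel intact. A minor bookkeeping matter is tracking the polynomial-in-$2^l$ losses arising from applying Young/Schur on the supports, but since $M$ is arbitrary these are absorbed by choosing $M'$ (resp.\ $M$) slightly larger than $M$ in the pointwise kernel bound.
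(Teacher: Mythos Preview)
Your argument for \eqref{eq:mismatch-1} is fine. The gap is in \eqref{eq:mismatch-2}: you cannot ``route every one of the $M$ derivatives onto $\chi_l$.'' Integration by parts transfers $\nabla_z$ onto the full amplitude $K_0(x-z)\chi_l(z)K_0(z-y)$, and the Leibniz rule distributes it over all three factors. After $M$ iterations you have a sum over $a+b+c=M$ with $a$ derivatives on $\chi_l$ and $b,c$ on the two $K_0$'s; the term with $a=0$ carries no $2^{-l}$ gain whatsoever. Since derivatives of $K_0$ are again Schwartz, that term contributes an operator with $L_x^2\to L_x^r$ norm $\lsm |k-m|^{-M}$ \emph{uniformly in $l$}, and no further integrations by parts repair this: $|k-m|$ may be of order $100$ while $l$ is arbitrary, so extra powers of $|k-m|^{-1}$ cannot manufacture the missing $2^{-Ml}$.

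The mechanism that produces both decays at once is most transparent on the Fourier side, which is how the argument in \cite{DLM19Adv} (to which the paper defers without giving its own proof) is organised. The frequency kernel of $\bx_k\chi_l\bx_m$ is $\psi_k(\xi)\,\wh{\chi_l}(\xi-\eta)\,\psi_m(\eta)$, and on its support $|\xi-\eta|\gsm|k-m|$. Since $\chi_l$ is smooth at spatial scale $2^l$, one has $|\wh{\chi_l}(\zeta)|\loe C_{M'}2^{3l}(1+2^l|\zeta|)^{-M'}$, hence on the support $|\wh{\chi_l}(\xi-\eta)|\loe C_{M'}2^{(3-M')l}|k-m|^{-M'}$: both decays drop out of a single bound, and Bernstein on the unit-scale output frequency support upgrades $L_x^2\to L_x^2$ to $L_x^2\to L_x^r$. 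Translated back to your physical-space picture, the point you are missing is that $K_0=\F^{-1}\psi$ has Fourier support in a unit cube; expanding both $K_0$ factors by Fourier inversion collapses the $z$-integral to $\wh{\chi_l}$ evaluated at a point $m-k+O(1)$, after which the bound above applies directly.
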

Now, we give the proof of Proposition \ref{prop:nabla2infty} using Lemmas \ref{lem:radialsobolev} and \ref{lem:mismatch}.
\begin{proof}[Proof of Proposition \ref{prop:nabla2infty}]
Take some $\epsilon>0$ that will be defined later. We first consider the $r<\I$ case. Then, for any $p\goe r$, by Lemma \ref{lem:large-deviation}, 
\begin{subequations}
\EQnn{
\norm{\jb{\nabla}^s e^{it\De}f^\om}_{L_\om^p L_t^2 L_x^r}\lsm &\sum_{N\in2^\N} N^s\norm{ e^{it\De}P_Nf^\om}_{ L_t^2 L_x^r L_\om^p} \nonumber\\
\lsm & \sqrt{p}\sum_{N\in2^\N} N^s\norm{ e^{it\De}\bx_kP_Nf}_{ L_t^2 L_x^r l_k^2} \nonumber\\
\lsm &\sqrt{p} \sum_{N\in2^\N}\sum_{j\goe 0} N^s\norm{\chi_j\bx_k\chi_{\loe j+5} e^{it\De}P_Nf}_{ L_t^2 L_x^r l_{k:|k|\sim N}^2} \label{esti:nabla2infty-1}\\
&+ \sqrt{p}\sum_{N\in2^\N}\sum_{j\goe 0} N^s\norm{\chi_j\bx_k\chi_{> j+5} e^{it\De}P_Nf}_{ L_t^2 L_x^r l_{k:|k|\sim N}^2}.\label{esti:nabla2infty-2}
}
\end{subequations}

We first bound the term \eqref{esti:nabla2infty-1}. When $j=0$, by Minkowski's, Bernstein's inequalities, Lemmas \ref{lem:radialsobolev} and \ref{lem:local-smoothing}, 
\EQn{\label{esti:nabla2infty-1-1}
\sum_{N\in2^\N}N^s\norm{\chi_0\bx_k\chi_{\loe 5} e^{it\De}P_Nf}_{ L_t^2 L_x^r l_{k:|k|\sim N}^2} \lsm & \sum_{N\in2^\N} N^s\normb{\chi_0\bx_k\chi_{\loe 5} e^{it\De}P_Nf}_{ L_t^2  l_{k:|k|\sim N}^2L_x^r }\\
\lsm & \sum_{N\in2^\N} N^s\normb{\bx_k\chi_{\loe 5} e^{it\De}P_Nf}_{ L_t^2  l_{k:|k|\sim N}^2L_x^2 }\\
\lsm & \sum_{N\in2^\N} N^s\normb{\chi_{\loe 5} e^{it\De}P_Nf}_{ L_{t,x}^2}\\
\lsm & \sum_{N\in2^\N} N^{s-\half 1} \norm{P_Nf}_{ L_{x}^2}\lsm \norm{f}_{H_x^{s+\epsilon-\half 1}}.
}
For $j\goe1$, also by Lemmas \ref{lem:radialsobolev} and \ref{lem:local-smoothing}, 
\EQn{\label{esti:nabla2infty-1-2}
&\sum_{N\in2^\N}\sum_{j\goe 1} N^s\norm{\chi_j\bx_k\chi_{\loe j+5} e^{it\De}P_Nf}_{ L_t^2 L_x^r l_{k:|k|\sim N}^2}\\
\lsm & \sum_{N\in2^\N}\sum_{j\goe 1} N^s2^{-(1-\frac2r)j}\norm{\chi_j|x|^{1-\frac2r}\bx_kP_{\sim N}\chi_{\loe j+5} e^{it\De}P_Nf}_{ L_t^2 L_x^r l_k^2}\\
\lsm & \sum_{N\in2^\N}\sum_{j\goe 1} N^s2^{-(1-\frac2r)j}\norm{\abs{\nabla}^\epsilon P_{\sim N}\chi_{\loe j+5} e^{it\De}P_Nf}_{ L_{t,x}^2}\\
\lsm & \sum_{N\in2^\N}\sum_{j\goe 1} N^{s+\epsilon}2^{-(1-\frac2r)j}\norm{\chi_{\loe j+5} e^{it\De}P_Nf}_{ L_{t,x}^2}\\
\lsm & \sum_{N\in2^\N}\sum_{j\goe 1} N^{s-\frac12+\epsilon}2^{-(\frac 12-\frac2r)j}\norm{P_Nf}_{ L_{x}^2}\lsm \norm{f}_{H_x^{s+2\epsilon-\half 1}}.
}
Then, combining \eqref{esti:nabla2infty-1-1} and \eqref{esti:nabla2infty-1-2}, we have
\EQn{\label{eq:nabla2infty-1}
\eqref{esti:nabla2infty-1} \lsm \sqrt{p}\norm{f}_{H_x^{s+2\epsilon-\half 1}}.
}

Next, we consider \eqref{esti:nabla2infty-2}. We decompose that
\begin{subequations}
\EQnn{
\eqref{esti:nabla2infty-2} \lsm&\sqrt{p}\sum_{N\in2^\N}\sum_{j\goe 0} N^s\norm{\chi_j\bx_k\chi_{> j+5} e^{it\De}P_Nf}_{ L_t^2 L_x^r l_k^2}\nonumber\\
\lsm & \sqrt{p}\sum_{N\in2^\N}\sum_{j\goe 0} \sum_{l>j+5} N^s\norm{\chi_j\bx_k\chi_{l}\wt\chi_l e^{it\De}P_Nf}_{ L_t^2 L_x^r l_k^2}\nonumber\\
\lsm & \sqrt{p}\sum_{N\in2^\N}\sum_{j\goe 0} \sum_{l>j+5} N^s\normb{\sum_{m\in\Z^3:|m-k|\loe 100}\chi_j\bx_k\chi_{l}\bx_m \wt\chi_l e^{it\De}P_Nf}_{ L_t^2 L_x^r l_{k}^2} \label{esti:nabla2infty-2-1}\\
& + \sqrt{p}\sum_{N\in2^\N}\sum_{j\goe 0} \sum_{l>j+5} N^s\normb{\sum_{m\in\Z^3:|m-k|> 100}\chi_j\bx_k\chi_{l}\bx_m \wt\chi_l e^{it\De}P_Nf}_{ L_t^2 L_x^r l_{k}^2}.\label{esti:nabla2infty-2-2}
}
\end{subequations}
Now, we take some $M\gg1$. For \eqref{esti:nabla2infty-2-1}, by Minkowski's inequality, Lemmas \ref{lem:mismatch}, and \ref{lem:local-smoothing},
\EQn{
\eqref{esti:nabla2infty-2-1} \lsm &\sqrt{p}\sum_{N\in2^\N}\sum_{j\goe 0} \sum_{l>j+5} N^s\normb{\sum_{m\in\Z^3:|m-k|\loe 100}\chi_j\bx_k\chi_{l}\bx_m \wt\chi_l e^{it\De}P_Nf}_{ L_t^2 L_x^r l_{k}^2}\\
\lsm &\sqrt{p}\sum_{N\in2^\N}\sum_{j\goe 0} \sum_{l>j+5} N^s\normb{\sum_{m\in\Z^3:|m-k|\loe 100}\norm{\chi_j\bx_k\chi_{l}\bx_m \wt\chi_l e^{it\De}P_Nf}_{ L_t^2 L_x^r}}_{l_{k}^2}\\
\lsm &\sqrt{p}\sum_{N\in2^\N}\sum_{j\goe 0} \sum_{l>j+5}  N^s 2^{-Ml}\normb{\sum_{m\in\Z^3:|m-k|\loe 100}\norm{\bx_m \wt\chi_l e^{it\De}P_Nf}_{ L_{t,x}^2}}_{l_{k}^2}\\
\lsm &\sqrt{p}\sum_{N\in2^\N}\sum_{j\goe 0} \sum_{l>j+5} N^s2^{-Ml}\norm{\bx_k \wt\chi_l e^{it\De}P_Nf}_{ l_k^2 L_{t,x}^2}\\
\lsm &\sqrt{p} \sum_{N\in2^\N}\sum_{j\goe 0} \sum_{l>j+5} N^s2^{-Ml}\norm{\wt\chi_l e^{it\De}P_Nf}_{ L_{t,x}^2}\\
\lsm &\sqrt{p} \sum_{N\in2^\N}\sum_{j\goe 0} \sum_{l>j+5} N^{s-\half 1}2^{-\brko{M-\half 1}l}\norm{P_Nf}_{ L_{x}^2}\lsm\sqrt{p} \norm{f}_{H_x^{s-\half 1+\epsilon}}.
}
Similarly by Lemmas \ref{lem:mismatch},  \ref{lem:local-smoothing}, and Young's inequality in $k$,
\EQn{
\eqref{esti:nabla2infty-2-2}=&\sqrt{p} \sum_{N\in2^\N}\sum_{j\goe 0} \sum_{l>j+5} N^s\normb{\sum_{m\in\Z^3:|m-k|> 100}\chi_j\bx_k\chi_{l}\bx_m \wt\chi_l e^{it\De}P_Nf}_{ L_t^2 L_x^r l_{k}^2}\\
\lsm &\sqrt{p} \sum_{N\in2^\N}\sum_{j\goe 0} \sum_{l>j+5} N^s\normb{\sum_{m\in\Z^3:|m-k|> 100}\normb{\bx_k\chi_{l}\bx_m \wt \bx_m \wt\chi_l e^{it\De}P_Nf}_{ L_t^2 L_x^r}}_{l_{k}^2}\\
\lsm &\sqrt{p} \sum_{N\in2^\N}\sum_{j\goe 0} \sum_{l>j+5} N^s2^{-Ml}\normb{\sum_{m\in\Z^3:|m-k|> 100}|k-m|^{-M}\normb{\wt \bx_m\wt\chi_l e^{it\De}P_Nf}_{L_{t,x}^2}}_{l_{k}^2}\\
\lsm &\sqrt{p}\sum_{N\in2^\N}\sum_{j\goe 0} \sum_{l>j+5} N^s2^{-Ml}\normb{\normb{\wt \bx_m\wt\chi_l e^{it\De}P_Nf}_{L_{t,x}^2}}_{l_{m}^2}\\
\lsm &\sqrt{p} \sum_{N\in2^\N}\sum_{j\goe 0} \sum_{l>j+5} N^s2^{-Ml}\norm{\wt\chi_l e^{it\De}P_Nf}_{ L_{t,x}^2}\\
\lsm &\sqrt{p} \sum_{N\in2^\N}\sum_{j\goe 0} \sum_{l>j+5} N^{s-\half 1}2^{-\brko{M-\half 1}l}\norm{P_Nf}_{ L_{x}^2}\lsm \sqrt{p}\norm{f}_{H_x^{s-\half 1+\epsilon}}.
}
Therefore, we have
\EQn{\label{eq:nabla2infty-2}
\eqref{esti:nabla2infty-2}\lsm \sqrt{p}\norm{f}_{H_x^{s-\half 1+\epsilon}}.
}

By \eqref{eq:nabla2infty-1} and \eqref{eq:nabla2infty-2}, 
\EQ{
\norm{\jb{\nabla}^s e^{it\De}f^\om}_{L_\om^p L_t^2 L_x^r}\lsm& \sum_{N\in2^\N} N^{s}\norm{ e^{it\De}P_Nf^\om}_{ L_t^2 L_x^{r} L_\om^p}\\
\lsm&\eqref{esti:nabla2infty-1} + \eqref{esti:nabla2infty-2} \lsm \sqrt{p}\norm{f}_{H_x^{s-\half 1+2\epsilon}}.
}
Let $\epsilon\loe \rev2(s_0-s+\frac12)$, then we have \eqref{eq:nabla2infty-pro} holds for $r<\I$.

When $r=\I$, using the similar argument above with $r=\frac3\ep$, 
\EQ{
\norm{\jb{\nabla}^s e^{it\De}f^\om}_{L_\om^p L_t^2 L_x^\I}\lsm &\sum_{N\in2^\N} N^{s+\epsilon}\norm{ e^{it\De}P_Nf^\om}_{ L_t^2 L_x^{\frac3\epsilon} L_\om^p} \lsm \sqrt{p}\norm{f}_{H_x^{s-\half 1+3\epsilon}}.
}
Let $\epsilon\loe \rev3(s_0-s+\frac12)$, then we finish the proof of the $r=\I$ case.
\end{proof}

\vskip 1.5cm

\section{Local well-posedness}\label{sec:lwp}

\vskip .5cm

\subsection{Reduction to the deterministic problem}
Suppose that $u=v+w$ with $u_0=v_0+w_0$, and $v,w$ satisfy
\EQn{
	\label{eq:nls-w}
	\left\{ \aligned
	&i\pd_t w + \De w = |w+v|^2 (w+v), \\
	& v=e^{it\De}v_0, \\
	& w(0,x) = w_0(x).
	\endaligned
	\right.
}
This decomposition is referred as Bourgain's trick \cite{Bou94CMP} or Da Prato-Debussche's trick \cite{dPD02JFA}. For $l\in [0,1]$, define the working space
\EQn{\label{defn:X1/2-norm}
	\norm{w}_{X^l(I)}=\brkb{\sum_{N\in2^\N} N^{2l}\norm{w_N}_{U_\De^2(I;L_x^2)}^2}^{\frac12}.
}
Then, we have the local results for $H_x^{\frac16}$-data and $H_x^{\frac13+}$-data, separately:
\begin{prop}\label{prop:local-H1/2}
Let $\frac16\loe s<\frac12$, $v\in Y^s\cap Z^s(\R)$, and $w_0\in H_x^{\frac12}$. Then, there exists some $T>0$ depending on $s$, $w_0$, and $v_0$ such that there uniquely exists a solution $w$ of \eqref{eq:nls-w} on $[0,T]$ with
\EQ{
	w\in C([0,T];H_x^{\frac12})\cap X^{\frac12}([0,T]).
}
\end{prop}
\begin{prop}\label{prop:local-H1}
	Let $\frac13< s\loe\frac12$, $v\in Y^s\cap Z^s(\R)$, and $w_0\in H_x^{1}$. Then, there exists some $T>0$ depending on $s$, $\norm{w_0}_{H_x^1}$, and $v_0$ such that there uniquely exists a solution $w$ of \eqref{eq:nls-w} on $[0,T]$ with
	\EQ{
		w\in C([0,T];H_x^{1})\cap X^1([0,T]).
	}
\end{prop}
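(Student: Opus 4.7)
The plan is a standard contraction mapping argument for
$$\Phi(w)(t) := e^{it\Delta}w_0 - i\int_0^t e^{i(t-s)\Delta}|v+w|^2(v+w)(s)\,ds$$
on a small-time ball in $X^1([0,T])$. By the duality formula in Lemma \ref{lem:upvpduality}, for each dyadic $N$,
$$\|P_N\Phi(w)\|_{U^2_\Delta} \lesssim \|P_N w_0\|_{L^2_x} + \sup_{\|g\|_{V^2_\Delta}=1}\Bigl|\int_0^T\!\!\int_{\R^3} P_N[|v+w|^2(v+w)]\,\bar g\,dx\,dt\Bigr|,$$
so after multiplying by $N$ and taking $\ell^2_N$, the proposition reduces to a trilinear estimate of the schematic form
$$\Bigl(\sum_N N^2\sup_{\|g_N\|_{V^2_\Delta}=1}|\langle P_N(u_1u_2u_3),g_N\rangle_{t,x}|^2\Bigr)^{1/2} \lesssim T^\theta\prod_{j=1}^3\bigl(\|v\|_{Y^s\cap Z^s}+\|w\|_{X^1}\bigr)$$
for some $\theta>0$, valid for each assignment $u_j\in\{v,w\}$, together with an analogous difference estimate for the contraction.

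When at least one factor is $w$, the estimate is classical: place one derivative on the highest-frequency factor, then bound every $w$-factor by its Strichartz norms (using $V^2_\Delta\hookrightarrow L^q_tL^r_x$ for admissible pairs, via Lemma \ref{lem:strichartz}) and every $v$-factor by an appropriate component of $Y^s$; the short time factor $T^\theta$ arises from Hölder in time at strictly subcritical exponents. The $Y^s$-norm was designed so that the small logarithmic losses from Bernstein are affordable whenever $s>\tfrac13$.

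The essential new difficulty is the purely linear contribution $vvv$: one full derivative must be absorbed into the product while $v$ only lies in $H^s$ with $s$ just above $\tfrac13$. Decomposing the three copies of $v$ dyadically, symmetry reduces to the pattern $v_{\mathrm{hi}}v_{\mathrm{low},1}v_{\mathrm{low},2}$ with $v_{\mathrm{hi}}$ at scale $\sim N$; pair this against $g=g_{\sim N}$ and split the derivative by Hölder as suggested in the excerpt,
$$|\langle g,\,v_{\mathrm{hi}}v_{\mathrm{low},1}v_{\mathrm{low},2}\rangle|\lesssim \|g\,v_{\mathrm{low},1}\|_{L^{3/2}_tL^2_x}\|\nabla v_{\mathrm{hi}}\,v_{\mathrm{low},2}\|_{L^1_tL^2_x}^{1/3}\|\nabla v_{\mathrm{hi}}\|_{L^\infty_{t,x}}^{2/3}\|v_{\mathrm{low},2}\|_{L^\infty_tL^2_x}^{2/3}.$$
Bound the first factor by Candy's bilinear Strichartz estimate (Lemma \ref{lem:bilinearstrichartz}) with exponents $(q,r)=(3/2,2)$, transferring $g$ from $U^2_\Delta$ to $V^2_\Delta$ via the interpolation Lemma \ref{lem:upvp-interpolation} at the cost of a logarithmic loss, and control the second factor again by bilinear Strichartz. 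The remaining two factors are handled by the $L^\infty_tL^\infty_x$ and $L^\infty_tL^2_x$ components of $Z^s$. This allocation leaves at most $\tfrac13$-order of derivative on each high-frequency copy of $v$, so summation over the three dyadic scales via Schur's test (Lemma \ref{lem:schurtest}) is affordable precisely when $s>\tfrac13$; the surplus $s-\tfrac13$ also absorbs the $U^p$–$V^p$ interpolation logarithm.

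Once this multilinear estimate is in hand, choosing $T$ small depending on $\|w_0\|_{H^1}$ and $\|v\|_{Y^s\cap Z^s}$ makes $\Phi$ a contraction on the ball $\{\|w\|_{X^1([0,T])}\le 2\|w_0\|_{H^1}\}$, and persistence $w\in C([0,T];H^1_x)$ follows from $U^2_\Delta\hookrightarrow C_tL^2_x$. The main obstacle is exactly the $vvv$ piece: a full derivative must cross three factors each only in $H^s$ with $s<\tfrac12$, and it is only the availability of bilinear Strichartz estimates with $L^q_tL^r_x$ indices strictly smaller than $(2,2)$ from Candy's work, combined with the $U^p$–$V^p$ duality structure, that permits pushing the regularity threshold all the way down to $s>\tfrac13$.
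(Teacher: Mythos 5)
Your overall strategy — contraction in $X^1([0,T])$ via the $U^p$--$V^p$ duality formula, bilinear Strichartz estimates with $(q,r)$ strictly inside $[1,2]^2$, and Lemma~\ref{lem:upvp-interpolation} to pass $g$ from $U^2_\Delta$ into $V^2_\Delta$ — is the paper's own, and your central Hölder allocation reproduces verbatim the heuristic split in the paper's introduction. However, two points in your execution would fail as written.

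First, the dichotomy ``at least one factor is $w$ is classical'' is not the right one. The deciding feature is which factor carries the highest frequency, because that is the one forced to absorb the full derivative $N$ coming from the $X^1$-weight in the duality pairing. If the high factor is $w$, the estimate is soft since $X^1$ carries one derivative. If the high factor is $v$, the estimate is hard \emph{regardless} of whether the two low factors are $v$ or $w$: $\nabla v_{\mathrm{hi}}$ has $1-s>\tfrac12$ more derivatives than any component of $Y^s\cap Z^s$ controls, and the deficit must be recovered by bilinear Strichartz on the low factors. This is why the paper separates case $I$ (derivative on $w$) from case $II$ (derivative on $v$) and keeps $u_{\leq N_1}=v_{\leq N_1}+w_{\leq N_1}$ intact inside $II$: the bilinear estimates \eqref{esti:local-h1-nablav-2low-bilinear-3}--\eqref{esti:local-h1-nablav-2low-bilinear-4} are written with $\norm{P_{N_1}v_0}_{L_x^2}+\norm{w_{N_1}}_{U_\De^2}$ precisely so they apply uniformly. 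Your $vvv$ machinery is what is needed for all of case $II$, not merely for the pure-$v$ contribution.

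Second, your key Hölder allocation places the pairing $\nabla v_{\mathrm{hi}}\,v_{\mathrm{low},2}$ in $L^1_t L^2_x$ and then invokes bilinear Strichartz. But $(q,r)=(1,2)$ gives $\tfrac1q+\tfrac2r=2$, sitting exactly on the boundary excluded by the hypothesis $\tfrac1q+\tfrac2r<2$ of Lemma~\ref{lem:bilinearstrichartz-origin}, so the estimate is not available there. Relatedly, every time exponent in your allocation is either $L^\infty_t$ or exactly Hölder-tight, leaving no slack from which to extract the small power $T^\theta$ your contraction requires. The paper's remedy is to perturb all the time exponents by $O(\epsilon)$ — $L^{3/(2+\epsilon)}_t$, $L^{1/(1-10\epsilon)}_t$, $L^{2/(9\epsilon)}_t L_x^\infty$ — which simultaneously moves the bilinear pairing off the forbidden endpoint and manufactures a $T^{\epsilon/100}$ factor; the margin $s>\tfrac13$ then absorbs both these $\epsilon$-losses and the $(N/N_1)^\epsilon$ factor produced by the $U^2_\Delta\to V^2_\Delta$ interpolation. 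Without this perturbation the heuristic you quote breaks down at exactly the step where you call on bilinear Strichartz.
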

In fact, the $s=\frac12$ case is trivial. However, we need it for the global result in Proposition \ref{prop:global-derterministic}.

Now, we give the proof of Theorem \ref{thm:local}, assuming that Propositions \ref{prop:local-H1/2} and \ref{prop:local-H1} hold.
\begin{proof}[Proof of Theorem \ref{thm:local}]
Let 
\EQ{
u(t)=e^{it\De}f^\om + w(t),
}
then $w$ satisfies the equation \eqref{eq:nls-w} with
\EQ{
v_0=f^\om\text{, }w_0=0\text{, and }v=e^{it\De}f^\om.
}

We first prove Theorem \ref{thm:local} $(1)$, using the result in Proposition \ref{prop:local-H1/2}. By Corollary \ref{cor:Y-norm-Z-norm}, we have for almost every $\om\in\Om$,
\EQ{
\norm{v}_{Y^s(\R)} + \norm{v}_{Z^s(\R)}<\I.
}
Since $w_0=0$, we can apply Proposition \ref{prop:local-H1/2} to obtain the existence and uniqueness of $w\in C([0,T]; H_x^{\frac12})$ for almost every $\om\in\Om$.

The proof of Theorem \ref{thm:local} $(2)$ by  Proposition \ref{prop:local-H1} is similar as above, so we omit the details.
\end{proof}
\subsection{Proof of Proposition \ref{prop:local-H1/2}}
We make the choices of some parameters and define the auxiliary working space:
\enu{
\item 
Let $C_0>0$ be the constant such that
\EQ{
	\norm{e^{it\De}w_0}_{X^{\frac12}([0,+\I))}\loe C_0\norm{w_0}_{H_x^{\frac12}}.
}
\item 
Let
\EQ{
	R:=\max\fbrk{C_0\norm{w_0}_{H_x^{\frac12}},1}.
}
\item
Let $\de$ and $\ep$ be some constants such that $0<\de,\ep\ll1$.
\item 
Define the following space:
\EQ{
	\norm{w}_{\wt X^{\frac12}(I)} = & \normb{\jb{\nabla}^{\frac13}w_N}_{l_N^2 L_t^2 L_x^{9}(2^\N\times I\times \R^3)} + \normb{\jb{\nabla}^{\frac16}w_N}_{l_N^2 L_t^4 L_x^{\frac92}(2^\N\times I\times \R^3)}\\
	& + \normb{\jb{\nabla}^{\frac16}w_N}_{l_N^2 L_t^3 L_x^{6}(2^\N\times I\times \R^3)} 
	+ \norm{w}_{L_t^4 L_x^6(I\times \R^3)} + \norm{w}_{L_t^3 L_x^6(I\times \R^3)}\\
	&+ \normb{\jb{\nabla}^{\frac12}w}_{L_t^2 L_x^6(I\times \R^3)} + \norm{w}_{L_t^{\frac{3}{1-\ep}}L_x^{\frac{6}{1-3\ep}}(I\times \R^3)}\\
	& + \norm{w}_{L_t^{\frac{4(4-3\ep)}{5+3\ep}}L_x^{\frac{2(4-3\ep)}{1-3\ep}}(I\times \R^3)}.
}
\item 
Let $T>0$ satisfy the smallness conditions 
\EQ{
	\norm{e^{it\De}w_0}_{\wt X^{\frac12}([0,T])} + \norm{v}_{Y^s([0,T])}\loe\de,
}
and
\EQ{
\de T^{\ep^2} \norm{v}_{Z^s(\R)}^{2} + T^{\rev{100}}R^{2} \norm{v}_{Z^s(\R)}\lsm \de^3.
}
}
We remark that
\EQ{
	X^{\frac12}([0,T])\hra \wt X^{\frac12}([0,T]),
} 
and $T$ depends on $s$, $\de$, $\ep$, $v_0$, and $w_0$. Let the working space be defined by
\EQ{
B_{R,\de,T}:=\fbrk{w\in C([0,T];H_x^{\frac12}): \norm{w}_{ X^{\frac12}([0,T])}\loe 2R\text{, } \norm{w}_{\wt X^{\frac12}([0,T])}\loe 2\de}.
}
Define
\EQ{
\Phi_{w_0,v}(w)=e^{it\De}w_0-i\int_0^t e^{i(t-s)\De}(|u|^2u)\ds.
}
Now, we are going to prove that $\Phi_{w_0,v}$ is a contraction mapping on $B_{R,\de,T}$, which is reduced to prove the following nonlinear estimate
\EQ{
\normb{\int_0^t e^{i(t-s)\De}(|u|^2u)\ds}_{ X^{\frac12}([0,T])}\loe \de.
}
In fact, we can use similar argument to prove
\EQ{
\normb{\Phi_{w_0,v}(w_1)-\Phi_{w_0,v}(w_2)}_{X^{\frac12}([0,T])} \loe \frac12 \norm{w_1-w_2}_{X^{\frac12}([0,T])},
}
and then finish the proof of contraction mapping.

Therefore, we reduce the proof of Proposition \ref{prop:local-H1/2} to the following lemma:
\begin{lem}
Let $\frac16\loe s<\frac12$, and $\de$, $\ep$, $C_0$, $R$, $T$ be defined as above. Assume that the following estimates hold:
\EQ{
	\norm{v}_{Y^s([0,T])}\loe \de\text{, }\norm{w}_{X^{\frac12}([0,T])}\lsm R\text{, and } \norm{w}_{\wt X^{\frac12}([0,T])}\lsm\de.
}
Then
\EQn{\label{eq:local-h1/2-main}
\normb{\int_0^t e^{i(t-s)\De}(|u|^2u)\ds}_{X^{\frac12}([0,T])}\loe \de.
}
\end{lem}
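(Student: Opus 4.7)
The plan is to start by invoking the $U_\De^2$-$V_\De^2$ duality: by definition \eqref{defn:X1/2-norm} of the $X^{1/2}$-norm and Lemma \ref{lem:upvpduality}, the target estimate \eqref{eq:local-h1/2-main} is equivalent to
\EQ{
\brkbb{\sum_{N\in 2^\N} N\sup_{\norm{g_N}_{V_\De^2([0,T];L_x^2)}=1}\absb{\int_0^T\!\!\int_{\R^3}P_N(|u|^2u)\wb{g_N}\dx\dt}^2}^{1/2}\loe\de.
}
This converts the $X^{1/2}$-bound into a family of multi-linear integral estimates against $V_\De^2$-test functions, making the bilinear Strichartz estimate of Lemma \ref{lem:bilinearstrichartz} available for any pair involving $g_N$ — the crucial additional tool beyond what $U_\De^2$ alone provides.

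\textbf{Step 2: Trilinear expansion and dyadic localization.} I will write $u=v+w$ and expand $|u|^2 u$ into eight trilinear terms in $(v,\wb v,w,\wb w)$, grouping them up to conjugation into four cases $vvv$, $vvw$, $vww$, $www$. For each case, I project every factor and $g_N$ onto dyadic frequencies and order them $N_1\goe N_2\goe N_3$ so that $N_1\gsm N$. The resulting double sum in $(N_2,N_3)$ will be handled by Schur's test (Lemma \ref{lem:schurtest}) once each piece is bounded with a positive power of $N_3/N_1$ as a gain.

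\textbf{Step 3: Case-by-case estimates.} The deterministic $www$ term is standard: a H\"older decomposition with ingredients of $\wt X^{1/2}([0,T])$, for instance $\norm{\jb\nabla^{1/2}w}_{L_t^2 L_x^6}\norm{w}_{L_t^4 L_x^6}^2$, closes with bound $\lsm \de^3$ since $\norm{w}_{\wt X^{1/2}}\lsm\de$. For the mixed terms I plan to place each $v$-factor in the $L_t^\I L_x^\I$ piece of $Z^s$, trading a H\"older-in-time cost of $T^\al$ for $\norm{v}_{Z^s}$: the $vvw$ case with $\norm{w}_{\wt X^{1/2}}\le\de$ yields prefactor $\de T^{\ep^2}\norm{v}_{Z^s}^2$, and the $vww$ case with the two $w$'s in $X^{1/2}$-controlled Strichartz norms yields $T^{1/100}R^2\norm{v}_{Z^s}$; both are $\lsm\de^3$ by the hypothesis on $T$. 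The purely probabilistic $vvv$ term is the essential case: the $\tfrac12$-derivative coming from $N^{1/2}$ is absorbed by pairing the highest-frequency $v$-factor with $g_N$ through Lemma \ref{lem:bilinearstrichartz}, transferring fractional derivative onto a lower-frequency $v$-factor that the $|\nabla|^s v_N$-pieces of $Y^s$ absorb for $s\goe \tfrac16$, yielding $\lsm\norm{v}_{Y^s([0,T])}^3\loe\de^3$.

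\textbf{Step 4: Main obstacle.} The hardest case will be the high-high-high $vvv$ interaction with $N_1\sim N_2\sim N$: a direct trilinear H\"older would put the full $\tfrac12$-derivative on a single $v$ and force $s\goe\tfrac12$, contradicting $s<\tfrac12$. The resolution is the combined use of the $U_\De^2$/$V_\De^2$ duality together with the non-$L^2$-admissible bilinear Strichartz estimate Lemma \ref{lem:bilinearstrichartz} at exponents satisfying $\rev q+\frac2r<2$ and $q,r<2$, which transfers derivative from the high-frequency $v$ onto a low-frequency factor and lowers the regularity requirement down to $s\goe\tfrac16$. The non-$L^2$-admissible Strichartz exponents $\brko{\frac3{1-\ep},\frac6{1-3\ep}}$ and $\brko{\frac{4(4-3\ep)}{5+3\ep},\frac{2(4-3\ep)}{1-3\ep}}$ built into $Y^s$ are specifically designed to absorb the non-admissible time exponents produced by Lemma \ref{lem:bilinearstrichartz}, so that Schur-summation closes exactly at the endpoint $s=\tfrac16$.
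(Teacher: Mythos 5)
Your proposal diverges from the paper's proof in ways that create genuine gaps, and your identification of the difficult case is incorrect.

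\textbf{Duality is not used as you plan.} You propose to carry $g_N$ through the argument and pair it with a factor of $u$ via the bilinear Strichartz estimate. The paper explicitly does \emph{not} do this for the $H_x^{1/2}$ local result: in \eqref{esti:local-h1/2-main-0.5} the dual pairing is immediately estimated by H\"older with $\norm{g}_{L_t^\I L_x^2}\lsm \norm{g}_{V_\De^2}$, and the problem reduces to bounding $\normo{N^{1/2}P_N(|u|^2u)}_{l_N^2 L_t^1 L_x^2}$; the test function $g$ never reappears. (The introduction even remarks that Theorem~\ref{thm:local}~(1) is proved \emph{without} exploiting the duality structure.) The bilinear Strichartz estimate is then applied to \emph{interior} pairs $v_{N_1}v_{N_2}$ and $v_{N_1}w_{N_2}$ with $N_2\ll N_1$, as in \eqref{esti:local-h1/2-vterm-1low-v-2-bi} and \eqref{esti:local-h1/2-vterm-1low-w-2-bi}, where $w_{N_2}$ is inserted through its $U_\De^2$-norm. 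Pairing the test function via duality with bilinear estimates is the strategy of Proposition~\ref{prop:local-H1}, not of this lemma.

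\textbf{You misidentify the hard case.} You claim the high-high-high $vvv$ interaction with $N_1\sim N_2\sim N_3\sim N$ is hardest and that ``a direct trilinear H\"older would put the full $\tfrac12$-derivative on a single $v$ and force $s\goe\tfrac12$''. That is not so: one splits $N^{1/2}\sim N_1^{1/6}N_2^{1/6}N_3^{1/6}$ and places $|\nabla|^{1/6}$ on each factor, which is exactly \eqref{esti:local-h1/2-vterm-2high} and only requires $s\goe\tfrac16$ — this is what sets the threshold, and it is the \emph{easy} case. Moreover, in the balanced-frequency case the bilinear Strichartz estimate of Lemma~\ref{lem:bilinearstrichartz} gives no gain, since there is no frequency separation, and there is no ``low-frequency factor'' onto which to transfer derivative — so your Step~4 resolution does not apply to the case you single out. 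The genuinely hard terms are the high-low-low cases \eqref{eq:local-h1/2-vterm-1low-v} and \eqref{eq:local-h1/2-vterm-1low-w}, where the derivative lands on $v_{N_1}$ with $N_2,N_3\ll N_1$; there bilinear Strichartz at $r=2$, $q<2$ transfers derivative onto the low factor.

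\textbf{The bound on the $vvv$ contribution is not uniformly $\de^3$.} You assert the $vvv$ term is $\lsm\norm{v}_{Y^s}^3\loe\de^3$, but the high-low-low $vvv$ piece necessarily produces a factor $\norm{v}_{Z^s}$ (via $L_{t,x}^\I$ bounds on $v_{N_1}$), and $Z^s$ is \emph{not} made small by shrinking the interval. The paper instead retains an explicit $T^{\ep^2}$ factor from the time H\"older gap and imposes the smallness hypothesis $\de T^{\ep^2}\norm{v}_{Z^s}^2 + T^{1/100}R^2\norm{v}_{Z^s}\lsm\de^3$ to close; your scheme omits this mechanism entirely.
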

\begin{proof}
In the following, we shall slightly abuse notation and write $u$ for both itself and its complex conjugate, and  all the space-time norms are taken over $[0,T]\times\R^3$ without writing its integral region. First, to prove \eqref{eq:local-h1/2-main}, by Lemma \ref{lem:upvpduality}, H\"older's inequality, and embedding $V_\De^2\hra L_t^\I L_x^2$, we are reduced to consider
\EQn{\label{esti:local-h1/2-main-0.5}
	\normb{\int_0^t e^{i(t-s)\De}(|u|^2u)\ds}_{X^{\frac12}} 
	\lsm & \brkb{\sum_{N\in2^\N} N\sup_{\norm{g}_{V_\De^2}=1}\absb{\int_0^T\jb{P_N(|u|^2u),g}\dd t}^2}^{\frac12}\\
	\lsm & \brkb{\sum_{N\in2^\N} N\sup_{\norm{g}_{V_\De^2}=1}\norm{P_N(|u|^2u)}_{L_t^1 L_x^2}^2 \norm{g}_{L_t^\I L_x^2}^2}^{\frac12}\\
	\lsm & \normb{N^{\frac12}P_N(|u|^2u)}_{l_N^2L_t^1 L_x^2}.
}

Noting that
\EQ{
\absb{P_N(|u|^2u)} \lsm \absb{\sum_{N_1:N_1\gsm N} P_N\big(u_{N_1} u_{\loe N_1}^2\big)},
} 
by Cauchy-Schwarz's inequality in $N_1$ and Lemma \ref{lem:littlewood-paley}, 
\EQn{\label{esti:local-h1/2-main-1}
\normb{N^{\frac12}P_N(|u|^2u)}_{l_N^2L_t^1 L_x^2} 
\lsm &  \normb{\normb{\sum_{N_1:N_1\gsm N}\frac{N^{\frac12}}{N_1^{\frac12}}N_1^{\frac12}P_N(u_{N_1} u_{\loe N_1}^2)}_{l_N^2}}_{L_t^1 L_x^2} \\
\lsm & \normb{\normb{N_1^{\frac12}P_N(u_{N_1} u_{\loe N_1}^2)}_{l_N^2}}_{L_t^1 l_{N_1}^2L_x^2} \\
\lsm & \normb{N_1^{\frac12}u_{N_1} u_{\loe N_1}^2}_{L_t^1 L_x^2l_{N_1}^2} 
\lsm  I + II,
}
where we denote
\EQ{
I:= \normb{N_1^{\frac12}w_{N_1} u_{\loe N_1}^2}_{L_t^1 L_x^2 l_{N_1}^2}\text{, and } II:=\normb{N_1^{\frac12}v_{N_1} u_{\loe N_1}^2}_{L_t^1 L_x^2 l_{N_1}^2}.
}

First, we deal with the term $I$, where the $\frac12$-order derivative acts on $w$. This is the simpler case, since $w$ allows estimates with the derivative of order $\frac12$. By frequency support property, 
\EQnnsub{
I \lsm & \normb{N_1^{\frac12}w_{N_1} u_{\sim N_1} u_{\loe N_1}}_{L_t^1 L_x^2 l_{N_1}^2} 
\label{eq:local-h1/2-wterm-1high}\\
&+ \normb{N_1^{\frac12}w_{N_1} u_{\ll N_1}^2}_{L_t^1 L_x^2 l_{N_1}^2}.
\label{eq:local-h1/2-wterm-2low}
}
Now, we estimate \eqref{eq:local-h1/2-wterm-1high}. By H\"older's inequality and Lemma \ref{lem:schurtest}, it holds that
\EQn{\label{esti:local-h1/2-wterm-1high}
\eqref{eq:local-h1/2-wterm-1high} \lsm & \sum_{N_1}  N_1^{\frac13}\norm{w_{N_1}}_{L_t^2 L_x^{9}} N_1^{\frac16} \norm{u_{\sim N_1}}_{L_t^4 L_x^{\frac92}} \norm{u_{\loe N_1}}_{L_t^4 L_x^6}\\
\lsm & \normb{\jb{\nabla}^{\frac13}w_N}_{l_N^2 L_t^2 L_x^{9}} \normb{\jb{\nabla}^{\frac16} u_{\sim N}}_{l_{N}^2 L_t^4 L_x^{\frac92}} \norm{u}_{L_t^4 L_x^6}\\
\lsm & \norm{w}_{\wt X^{\frac12}}(\norm{w}_{\wt X^{\frac12}} + \norm{v}_{Y^s})^2.
}
For \eqref{eq:local-h1/2-wterm-2low}, by H\"older's inequality, Lemmas \ref{lem:linfty-littewoodpaley} and \ref{lem:littlewood-paley}, 
\EQn{\label{esti:local-h1/2-wterm-2low}
\eqref{eq:local-h1/2-wterm-2low} \lsm & \normb{N_1^{\frac12}w_{N_1}}_{L_t^2 L_x^6 l_{N_1}^2} \normb{\sup_{N_1} |u_{\ll N_1}|}_{L_t^4 L_x^6 }^2 \\
\lsm & \normb{\jb{\nabla}^{\frac12}w}_{L_t^2 L_x^6} \norm{u}_{L_t^4 L_x^{6}}^2
\lsm  \norm{w}_{\wt X^{\frac12}}(\norm{w}_{\wt X^{\frac12}} + \norm{v}_{Y^s})^2.
}
Then, by \eqref{esti:local-h1/2-wterm-1high} and \eqref{esti:local-h1/2-wterm-2low}, we have
\EQn{\label{esti:local-h1/2-main-2}
I \lsm  \eqref{eq:local-h1/2-wterm-1high} + \eqref{eq:local-h1/2-wterm-2low}\lsm \norm{w}_{\wt X^{\frac12}}(\norm{w}_{\wt X^{\frac12}} + \norm{v}_{Y^s})^2\lsm \de^3.
}

Next, we consider the term $II$, where the $\frac12$-order derivative acts on $v$. However, the function $v$ can only have $\frac16$-order derivative. Therefore, we need to transfer the additional fractional order derivative to other functions. We make a frequency decomposition:
\EQnnsub{
II\lsm & \normb{N_1^{\frac12}v_{N_1}u_{\sim N_1}^2}_{L_t^1 L_x^2 l_{N_1}^2}  \label{eq:local-h1/2-vterm-2high}\\
& + \normb{N_1^{\frac12}v_{N_1}v_{\ll N_1} u_{\loe N_1}}_{L_t^1 L_x^2 l_{N_1}^2} \label{eq:local-h1/2-vterm-1low-v}\\
& + \normb{N_1^{\frac12}v_{N_1}w_{\ll N_1} u_{\loe N_1}}_{L_t^1 L_x^2 l_{N_1}^2}. \label{eq:local-h1/2-vterm-1low-w}
}
By Lemma \ref{lem:littlewood-paley}, H\"older's inequality, and embedding $l_N^2\hra l_N^3$,
\EQn{\label{esti:local-h1/2-vterm-2high}
\eqref{eq:local-h1/2-vterm-2high}
\lsm & \sum_{N_1\in2^\N} N_1^{\frac16} \norm{v_{N_1}}_{L_t^3 L_x^6} \brkb{N_1^{\frac16}\norm{u_{\sim N_1}}_{L_t^3 L_x^6}}^2\\
\lsm & \normb{\jb{\nabla}^{\frac16}v_{N_1}}_{l_{N_1}^2 L_t^3 L_x^6} \normb{\jb{\nabla}^{\frac16}u_{\sim N_1}}_{l_{N_1}^2 L_t^3 L_x^6}^2\\
\lsm & \norm{v}_{Y^s}(\norm{w}_{\wt X^{\frac12}} + \norm{v}_{Y^s})^2\lsm \de^3.
}

Next, we consider \eqref{eq:local-h1/2-vterm-1low-v} and \eqref{eq:local-h1/2-vterm-1low-w}. The proof is more difficult, where we use the bilinear Strichartz estimate to transfer derivative. However, this approach will create the term $\norm{v}_{Z^s}$, which cannot get smallness by letting the interval small. Therefore, we also need some $T$ to control the $Z^s$-norm. 

Now, we consider the term \eqref{eq:local-h1/2-vterm-1low-v}. By H\"older's inequality,
\EQn{\label{esti:local-h1/2-vterm-1low-v-1}
	\eqref{eq:local-h1/2-vterm-1low-v} 
	\lsm & \sum_{N_2\ll N_1} N_1^{\frac12} \norm{v_{N_1}v_{N_2} u_{\loe N_1}}_{L_t^1 L_x^2}\\
	\lsm & \sum_{N_2\ll N_1} N_1^{\frac12} \norm{v_{N_1} v_{N_2}}_{L_t^{\frac1{1-\ep}}L_x^2}^{\frac23+\ep} \norm{v_{N_1}}_{L_{t,x}^\I}^{\frac13-\ep} \norm{v_{N_2}}_{L_{t,x}^\I}^{\frac13-\ep} \norm{u_{\loe N_1}}_{L_t^{\frac{3}{1-\ep +3\ep^2}}L_x^{\frac{6}{1-3\ep}}}.
}
By Lemma \ref{lem:bilinearstrichartz}, for $N_2\ll N_1$, 
\EQn{\label{esti:local-h1/2-vterm-1low-v-2-bi}
	\norm{v_{N_1} v_{N_2}}_{L_t^{\frac{1}{1-\ep}}L_x^2}\lsm  N_2^{2\ep}N_1^{-\frac12}\norm{P_{N_1}v_0}_{L_x^2}\norm{P_{N_2}v_0}_{L_x^2}
	\lsm  N_1^{-\frac23}\norm{v}_{Z^s}^2.
}
Note that
\EQn{\label{esti:local-h1/2-vterm-1low-v-2-strz}
\norm{u}_{L_t^{\frac{3}{1-\ep }}L_x^{\frac{6}{1-3\ep}}} \lsm \norm{w}_{\wt X^{\frac12}} + \norm{v}_{Y^s}\lsm\de.
}
By \eqref{esti:local-h1/2-vterm-1low-v-1}, \eqref{esti:local-h1/2-vterm-1low-v-2-bi}, \eqref{esti:local-h1/2-vterm-1low-v-2-strz}, and H\"older's inequality,
\EQn{\label{esti:local-h1/2-vterm-1low-v-2}
\eqref{eq:local-h1/2-vterm-1low-v} 
\lsm & \sum_{N_2\ll N_1} N_1^{\frac12} \norm{v_{N_1} v_{N_2}}_{L_t^{\frac1{1-\ep}}L_x^2}^{\frac23+\ep} \norm{v_{N_1}}_{L_{t,x}^\I}^{\frac13-\ep} \norm{v_{N_2}}_{L_{t,x}^\I}^{\frac13-\ep} \norm{u_{\loe N_1}}_{L_t^{\frac{3}{1-\ep +3\ep^2}}L_x^{\frac{6}{1-3\ep}}}\\
\lsm & T^{\ep^2}\sum_{N_2\ll N_1} N_1^{\frac{1}{18}-\frac23\ep} \norm{v}_{Z^s}^{\frac43+2\ep} N_1^{-\frac1{18}+\frac12\ep-\ep^2} \normb{\jb{\nabla}^{\frac16-\ep}v_{N_1}}_{L_{t,x}^\I}^{\frac13-\ep} \norm{v_{N_2}}_{L_{t,x}^\I}^{\frac13-\ep} \norm{u}_{L_t^{\frac{3}{1-\ep }}L_x^{\frac{6}{1-3\ep}}} \\
\lsm & \de T^{\ep^2}\norm{v}_{Z^s}^{\frac53+\ep}\sum_{N_2\ll N_1} N_1^{-\frac16\ep-\ep^2}  \norm{v_{N_2}}_{L_{t,x}^\I}^{\frac13-\ep}
\lsm  \de T^{\ep^2} \norm{v}_{Z^s}^{2}.
}

Finally, we consider the term \eqref{eq:local-h1/2-vterm-1low-w}. By H\"older's inequality,
\EQn{\label{esti:local-h1/2-vterm-1low-w-1}
	\eqref{eq:local-h1/2-vterm-1low-w} 
	\lsm & \sum_{N_2\ll N_1} N_1^{\frac12} \norm{v_{N_1}w_{N_2} u_{\loe N_1}}_{L_t^1 L_x^2}\\
	\lsm & \sum_{N_2\ll N_1} N_1^{\frac12} \norm{v_{N_1} w_{N_2}}_{L_t^{\frac43}L_x^2}^{\frac23+\ep} \norm{v_{N_1}}_{L_{t,x}^\I}^{\frac13-\ep} \norm{w_{N_2}}_{L_t^{q_1} L_x^{r_1}}^{\frac13-\ep} \norm{u_{\loe N_1}}_{L_t^{q_1} L_x^{r_1}},
}
where $(q_1,r_1)$ is defined by
\EQ{
\frac{2-3\ep}{4}=(\frac{4}{3}-\ep)\rev{q_1}\text{, and } \frac{1-3\ep}{6}=(\frac{4}{3}-\ep)\rev{r_1}.
}
Noting that $q_1=\frac{4(4-3\ep)}{3(2-3\ep)}=\frac83+$ and $r_1=\frac{2(4-3\ep)}{1-3\ep}=8+$, we have $\frac32-\frac{2}{q_1}-\frac3{r_1}=\frac38+$, and there exists $q_2=\frac{4(4-3\ep)}{5+3\ep}$ such that
\EQ{
\frac{2}{q_2}+\frac3{r_1}=1.
}
Then, we have
\EQn{\label{esti:local-h1/2-vterm-1low-w-2-strz}
\norm{u_{\loe N_1}}_{L_t^{q_2}L_x^{r_1}}\lsm\norm{w}_{L_t^{q_2}L_x^{r_1}} + \norm{v}_{L_t^{q_2}L_x^{r_1}}\lsm \de.
}
By Lemma \ref{lem:bilinearstrichartz}, for $N_2\ll N_1$,
\EQn{\label{esti:local-h1/2-vterm-1low-w-2-bi}
\norm{v_{N_1} w_{N_2}}_{L_t^{\frac43}L_x^2}\lsm & N_2^{\frac12}N_1^{-\frac12}\norm{P_{N_1}v_0}_{L_x^2}\norm{w_{N_2}}_{U_\De^2}\\
\lsm & N_1^{-\frac23}\norm{v}_{Z^s} \norm{w}_{X^{\frac12}}\lsm N_1^{-\frac23} R\norm{v}_{Z^s}.
}
We remark that for \eqref{esti:local-h1/2-vterm-1low-w-2-bi}, if we do not invoke the $U^p$-$V^p$ method, by Lemma \ref{lem:bilinearstrichartz}, it reduces to deal with the term
\EQ{
	N_1^{\frac12}\brkb{\norm{P_{N_1}w_0}_{L_x^2} + \norm{P_{N_1}(|u|^2u)}_{L_t^1 L_x^2}}.
}
Thus, the argument would be more complex, especially when $N_1^{1/2}$ acts on $|v|^2v$.

By \eqref{esti:local-h1/2-vterm-1low-w-1}, \eqref{esti:local-h1/2-vterm-1low-w-2-strz}, \eqref{esti:local-h1/2-vterm-1low-w-2-bi}, and H\"older's inequality in $t$,
\EQn{\label{esti:local-h1/2-vterm-1low-w-2}
	\eqref{eq:local-h1/2-vterm-1low-w} 
	\lsm & \sum_{N_2\ll N_1} N_1^{\frac12} \norm{v_{N_1} w_{N_2}}_{L_t^{\frac43}L_x^2}^{\frac23+\ep} \norm{v_{N_1}}_{L_{t,x}^\I}^{\frac13-\ep} \norm{w_{N_2}}_{L_t^{q_1} L_x^{r_1}}^{\frac13-\ep} \norm{u_{\loe N_1}}_{L_t^{q_1} L_x^{r_1}}\\
	\lsm & T^{(\rev3-\ep)(\rev{q_1}-\rev{q_2})} \sum_{N_2\ll N_1} N_1^{\frac{1}{18}-\frac23\ep} R^{\frac23+\ep} \norm{v}_{Z^s}^{\frac23+\ep} \norm{v_{N_1}}_{L_{t,x}^\I}^{\frac13-\ep} \norm{w}_{L_t^{q_2} L_x^{r_1}}^{\frac13-\ep} \norm{u}_{L_t^{q_2} L_x^{r_1}}\\
	\lsm & T^{\rev{100}}\de^{\frac43-\ep} R^{\frac23+\ep} \norm{v}_{Z^s}^{\frac23+\ep} \sum_{N_2\ll N_1} N_1^{-\frac16\ep-\ep^2}\normb{\jb{\nabla}^{\frac16-\ep}v}_{L_{t,x}^\I}^{\frac13-\ep}
	\lsm  T^{\rev{100}}R^{2} \norm{v}_{Z^s}.
}

Therefore, by \eqref{esti:local-h1/2-vterm-2high}, \eqref{esti:local-h1/2-vterm-1low-v-2}, and \eqref{esti:local-h1/2-vterm-1low-w-2}, 
\EQn{\label{esti:local-h1/2-main-3}
II\lsm \eqref{eq:local-h1/2-vterm-2high} + \eqref{eq:local-h1/2-vterm-1low-v} + \eqref{eq:local-h1/2-vterm-1low-w}\lsm \de^3 +  \de T^{\ep^2} \norm{v}_{Z^s}^{2} + T^{\rev{100}}R^{2} \norm{v}_{Z^s}.
}
Then, combining \eqref{esti:local-h1/2-main-1}, \eqref{esti:local-h1/2-main-2}, and \eqref{esti:local-h1/2-main-3}, we have
\EQ{
\normb{\int_0^t e^{i(t-s)\De}(|u|^2u)\ds}_{L_t^\I H_x^{\frac12}\cap X^{\frac12}}\loe & I + II \\
\loe & C\brk{\de^3 + \de T^{\ep^2} \norm{v}_{Z^s}^{2} + T^{\rev{100}}R^{2} \norm{v}_{Z^s} } \\
\loe & C\de^3\loe \de.
}
This completes the proof of the lemma.
\end{proof}
\subsection{Proof of Proposition \ref{prop:local-H1}}
In the following, we write $U_\De^2=U_\De^2(I;L_x^2)$ and $V_\De^2=V_\De^2(I;L_x^2)$ for short. 
We make the choices of some parameters:
\enu{
\item 
Let $C_0>0$ be the constant such that
\EQ{
	\norm{e^{it\De}w_0}_{X^{1}(\R)}\loe C_0\norm{w_0}_{H_x^{1}}.
}
\item 
Let $0<\ep<\frac{1}{100}(s-\frac13)$ and
\EQ{
	R:=\max\fbrk{C_0\norm{w_0}_{H_x^{1}},1}.
} 
\item 
Let $T>0$ satisfy the smallness conditions:
\EQn{\label{assume:local-h1-T}
	\norm{v}_{Y^s([0,T])}\loe R\text{, and }
	CT^{\frac{\ep}{100}}\brkb{\norm{v}_{Z^s(\R)}^3 + R^3}\loe\frac12R.
}
}
Note that $T$ depends on $s$, $\norm{w_0}_{H_x^1}$, $v$, and $\norm{v}_{Z^s(\R)}$. Let the working space be defined by
\EQ{
	B_{R,T}:=\fbrk{w\in C([0,T];H_x^{1}): \norm{w}_{X^1([0,T])}\loe 2R}.
}
Define
\EQ{
	\Phi_{w_0,v}(w)=e^{it\De}w_0-i\int_0^t e^{i(t-s)\De}(|u|^2u)\ds.
}
Similar as in the former section, in order to get that $\Phi_{w_0,v}$ is a contraction mapping on $B_{R,T}$, it suffices to prove:
\begin{lem}\label{lem:local-h1}
Let $\frac13<s\loe\frac12$, $0<\ep<\frac{1}{100}(s-\frac13)$, and $C_0,R$ be defined as above. Assume that $0<T<1$ satisfies the smallness condition \eqref{assume:local-h1-T}, and let 
\EQ{
	\norm{w}_{X^1([0,T])}\lsm R.
}
Then 
\EQn{\label{eq:local-h1-main}
	\normb{\int_0^t e^{i(t-s)\De}(|u|^2u)\ds}_{X^{1}([0,T])}\loe R.
}
\end{lem}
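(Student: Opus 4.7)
The plan is to adapt the argument for Proposition \ref{prop:local-H1/2}, upgrading the $\frac12$-derivative on the nonlinearity to a full derivative. First, I would apply the duality formula of Lemma \ref{lem:upvpduality} together with the embedding $V_\De^2\hra L_t^\I L_x^2$ to reduce \eqref{eq:local-h1-main} to
\EQ{
\normb{N\, P_N(|u|^2u)}_{l_N^2 L_t^1 L_x^2([0,T]\times\R^3)}\loe R.
}
A Littlewood-Paley decomposition of the nonlinearity, together with Cauchy-Schwarz in the highest output frequency $N_1$ as in \eqref{esti:local-h1/2-main-1}, reduces matters to bounding $\normb{N_1\, u_{N_1}\, u_{\loe N_1}^2}_{L_t^1 L_x^2 l_{N_1}^2}$. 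I would then split this according as the high-frequency factor is $w_{N_1}$ (so the derivative falls on $w$) or $v_{N_1}$ (so it falls on $v$), and further according to the relative size of the two low-frequency factors.

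When the derivative falls on $w_{N_1}$, it is absorbed by the $X^1$-norm of $w$: by H\"older's inequality, the vector-valued maximal estimate (Lemma \ref{lem:linfty-littewoodpaley}), the Littlewood-Paley square-function inequality (Lemma \ref{lem:littlewood-paley}), and Strichartz embedded in $U_\De^2$ (Lemma \ref{lem:strichartz}), each resulting piece is controlled by $T^{1/2}\norm{w}_{X^1}(\norm{w}_{X^1}+\norm{v}_{Y^s})^2\lsm T^{1/2}R^3\loe R/10$ thanks to \eqref{assume:local-h1-T}. When the derivative falls on $v_{N_1}$ but both low-frequency factors are at frequency $\sim N_1$, I would redistribute the derivative evenly in thirds across three $Y^s$-factors, exactly as in \eqref{esti:local-h1/2-vterm-2high}; the resulting summand scales like $N_1^{3s-1}$, which is summable in $l_{N_1}^2$ precisely because $s>\frac13$.

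The main obstacle is the high-low configuration $v_{N_1}\, u_{N_2}\, u_{\loe N_1}$ with $N_2\ll N_1$, where the full derivative $N_1$ sits on $v_{N_1}$ although $v$ carries only $s<\frac12$ derivatives in $Y^s$. As emphasized in the introduction, I would transfer the missing fraction of derivative through the multi-scale bilinear Strichartz estimate (Lemma \ref{lem:bilinearstrichartz}) with exponents $q<2$ and $r=2$, which supplies a power of $N_1$ strictly better than $N_1^{-1/2}$, and then interpolate with the $L_{t,x}^\I$ bound on $|\nabla|^{s-\ep}v_{N_1}$ given by the $Z^s$-norm (Corollary \ref{cor:Y-norm-Z-norm}). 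A direct exponent check shows the resulting power of $N_1$ is $-\eta$ for some $\eta>0$ precisely when $s>\frac13$, and the surplus $s-\frac13$ is exactly what absorbs the logarithmic loss incurred by Lemma \ref{lem:upvp-interpolation} when one passes from $U_\De^2$ to $V_\De^2$. When the low factor is $w_{N_2}$ rather than $v_{N_2}$, the same scheme applies with a bilinear bound of the form $\norm{v_{N_1}\, w_{N_2}}_{L_t^{4/3}L_x^2}\lsm N_2^{1/2}N_1^{-1/2}\norm{v}_{Z^s}\norm{w}_{X^1}$, and the crucial payoff of working in $U_\De^2$ is that it lets me feed $w_{N_2}$ directly into bilinear Strichartz without expanding its Duhamel integral (otherwise the derivative $N_1$ would eventually have to act on $|u|^2u$, creating an unmanageable recursion). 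H\"older's inequality in time then extracts small positive powers of $T$ of the form $T^{1/36}$ or $T^{\ep/100}$, leading to an overall bound
\EQ{
T^{\ep/100}\brkb{R^3 + R^2\norm{v}_{Z^s} + R\norm{v}_{Z^s}^2 + \norm{v}_{Z^s}^3}\loe R
}
thanks to \eqref{assume:local-h1-T}. The contraction inequality for $\Phi_{w_0,v}$ is obtained by running the same multilinear estimates on trilinear differences, closing the fixed-point argument.
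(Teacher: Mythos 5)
Your opening move is where the argument breaks. You reduce \eqref{eq:local-h1-main} to
$\normb{N\, P_N(|u|^2u)}_{l_N^2 L_t^1 L_x^2}\loe R$
by embedding the dual function $g\in V_\De^2$ into $L_t^\I L_x^2$. That throws away the dual variable at the start, and this is precisely what the paper refuses to do for the part of the nonlinearity where the full derivative lands on $v$. To see why this step fails, look at the configuration you yourself flag as the main obstacle, $v_{N_1}u_{N_2}u_{N_3}$ with $N_2,N_3\ll N_1$. Once $g$ is gone, the only bilinear gain available is from pairing $\nabla v_{N_1}$ with one low-frequency factor; with $r=2$ the bilinear Strichartz of Lemma \ref{lem:bilinearstrichartz} always yields exactly $N_1^{-(1-1/r)}=N_1^{-1/2}$ (the parameter $q<2$ only lowers the power on the \emph{low} frequency $M$, not on $N$ — your claim that it gives ``a power of $N_1$ strictly better than $N_1^{-1/2}$'' misreads the lemma). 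Interpolating that bilinear piece with weight $\theta$ against $\|v_{N_1}\|_{L_{t,x}^\I}\lsm N_1^{-s+\ep}$ and using $\|P_{N_1}v_0\|_{L_x^2}\lsm N_1^{-s}$ leaves an $N_1$-power of $1-\theta/2-s+O(\ep)$, which requires $\theta\ge 2(1-s)>1$ for $s\le\frac12$. No admissible interpolation closes this. Your proposed scheme therefore cannot deliver the claimed $-\eta$ exponent for $s>\frac13$, and indeed you never exhibit the exponent check you assert exists.

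The paper's proof resolves this by keeping the $\sup_{\|g\|_{V_\De^2}=1}$ structure through the estimate of $II$ and applying bilinear Strichartz \emph{twice}: once to $g_N u_{N_1}$ (the dual function paired with the lowest frequency, gaining an extra $N^{-1/2}$ — see \eqref{esti:local-h1-nablav-2low-bilinear-1}--\eqref{esti:local-h1-nablav-2low-bilinear-3}) and once to $\nabla v_N u_{N_2}$ (see \eqref{esti:local-h1-nablav-2low-bilinear-4}). It is only with both gains that the $N$-power becomes $N^{-20\ep}$. Note also that the $U_\De^2$-to-$V_\De^2$ interpolation of Lemma \ref{lem:upvp-interpolation}, which you invoke, is needed precisely because $g$ sits in $V_\De^2$ while the bilinear estimate is stated for $U_\De^2$; if you have already discarded $g$ into $L_t^\I L_x^2$, there is nothing left to interpolate. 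This internal inconsistency is a further symptom that the duality structure is being invoked in name but not actually used where it matters. Your treatment of the other pieces (derivative on $w$; the balanced case $v_{N_1}u_{\sim N_1}^2$; and, with a modest adjustment, the case with one comparable factor $v_{N_1}u_{\ll N_1}u_{\sim N_1}$, for which the paper reduces to a dual Strichartz norm $L_t^{9/8}L_x^{54/31}$ rather than $L_t^1L_x^2$) is reasonable, and your final smallness ledger matches the structure of \eqref{assume:local-h1-T}, but without repairing the main case the lemma is not proved.
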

\begin{proof}
Again, we do not distinguish $u$ and $\wb u$, and all the space-time norms are restricted on $[0,T]\times \R^3$. By Lemma \ref{lem:upvpduality} and frequency decomposition, we have
\EQ{
	&\normb{\int_0^t e^{i(t-s)\De}(|u|^2u)\ds}_{X^1} \\
	\lsm & \brkb{\sum_{N\in2^\N} N^2\sup_{\norm{g}_{V_\De^2}=1}\absb{\int_0^T\jb{P_N(|u|^2u),g}\dd t}^2}^{\frac12}\\
	\lsm & \brkb{\sum_{N\in2^\N} N^2\sup_{\norm{g}_{V_\De^2}=1}\absb{\int_0^T\int P_N(\sum_{N_1}u_{N_1}u_{\loe N_1}^2)g\dx\dd t}^2}^{\frac12}
	\lsm I+II
}
where
\EQ{
I:=\brkb{\sum_{N\in2^\N} N^2\sup_{\norm{g}_{V_\De^2}=1}\absb{\int_0^T\int P_N(\sum_{N_1}w_{N_1}u_{\loe N_1}^2)g\dx\dd t}^2}^{\frac12},
}
and
\EQ{
II:=\brkb{\sum_{N\in2^\N} N^2\sup_{\norm{g}_{V_\De^2}=1}\absb{\int_0^T\int P_N(\sum_{N_1}v_{N_1}u_{\loe N_1}^2)g\dx\dd t}^2}^{\frac12}.
}

We first consider the term $I$, where the first order derivative acts on $w$. By H\"older's inequality and embedding $V_\De^2\hra L_t^\I L_x^2$, we have
\EQn{\label{esti:local-h1-I-1}
I\lsm & \brkb{\sum_{N\in2^\N} N^2\sup_{\norm{g}_{V_\De^2}=1}\normb{P_N(\sum_{N_1:N\lsm N_1}w_{N_1}u_{\loe N_1}^2)}_{L_t^1 L_x^2}^2\norm{g}_{L_t^\I L_x^2}^2}^{\frac12}\\
\lsm & \brkb{\sum_{N\in2^\N} N^2\sup_{\norm{g}_{V_\De^2}=1}\normb{P_N(\sum_{N_1:N\lsm N_1}w_{N_1}u_{\loe N_1}^2)}_{L_t^1 L_x^2}^2\norm{g}_{V_\De^2}^2}^{\frac12} \\
\lsm & \brkb{\sum_{N\in2^\N} N^2\normb{P_N(\sum_{N_1:N\lsm N_1}w_{N_1}u_{\loe N_1}^2)}_{L_t^1 L_x^2}^2}^{\frac12}.
}
Then, by \eqref{esti:local-h1-I-1}, Minkowski's inequality, H\"older's inequality in $N_1$, and Lemma \ref{lem:littlewood-paley},
\EQn{\label{esti:local-h1-I-2}
I\lsm & \brkb{\sum_{N\in2^\N} N^2\normb{P_N(\sum_{N_1:N\lsm N_1}w_{N_1}u_{\loe N_1}^2)}_{L_t^1 L_x^2}^2}^{\frac12}\\
\lsm &  \normb{\sum_{N_1:N\lsm N_1}\frac{N}{N_1}N_1P_N(w_{N_1}u_{\loe N_1}^2)}_{L_t^1 L_x^2l_N^2} \\
\lsm & \normb{N_1P_N(w_{N_1}u_{\loe N_1}^2)}_{L_t^1 L_x^2l_N^2l_{N_1}^2} 
\lsm  \normb{N_1w_{N_1}u_{\loe N_1}^2}_{L_t^1 l_{N_1}^2L_x^2}.
}
By \eqref{esti:local-h1-I-2}, H\"older's inequality, Lemmas \ref{lem:linfty-littewoodpaley} and \ref{lem:strichartz},
\EQn{\label{esti:local-h1-I-3}
I \lsm & \normb{N_1w_{N_1}u_{\loe N_1}^2}_{L_t^1 l_{N_1}^2L_x^2} \\
\lsm & \normb{\norm{N_1w_{N_1}}_{L_x^6}\norm{u_{\loe N_1}}_{L_x^6}^2}_{L_t^1 l_{N_1}^2}\\
\lsm & \norm{\jb{\nabla} w}_{L_t^2 L_x^6}\norm{u}_{L_t^4 L_x^6}^2\\
\lsm & T^{\frac{1}{2}}\norm{\jb{\nabla} w}_{L_t^2 L_x^6}\norm{u}_{L_t^\I L_x^6}^2\\
\lsm & T^{\frac{1}{2}} \norm{w}_{X^1}\brkb{\norm{v}_{Z^s} + \norm{w}_{X^1}}^{2}.
}
Therefore, by \eqref{esti:local-h1-I-3} and the choice of $T$, we have
\EQn{\label{esti:local-h1-I}
I\loe CT^{\frac{1}{2}} \norm{w}_{X^1}\brkb{\norm{v}_{Z^s} + \norm{w}_{X^1}}^{2}\loe C T^{\frac{\ep}{100}}(\norm{v}_{Z^s}^3 +R^3)\loe\frac12R.
}

We next consider the term $II$, where the first order derivative acts fully on $v$. However, $v$ can only have estimates with the derivative of order $\frac13$, thus there is a gap of $\frac23$-order derivative. Note that the bilinear Strichartz estimate can only lower down $\frac12$-order derivative. Therefore, this is the main case where we need to exploit the duality structure. To this end, we make a frequency decomposition:
\EQnnsub{
II\lsm & \brkb{\sum_{N\in2^\N} N^2\sup_{\norm{g}_{V_\De^2}=1}\absb{\int_0^T\int P_N(\sum_{N_1}v_{N_1}u_{\sim N_1}^2)g\dx\dd t}^2}^{\frac12} \label{eq:local-h1-nablav-2high}\\
&+ \brkb{\sum_{N\in2^\N} N^2\sup_{\norm{g}_{V_\De^2}=1}\absb{\int_0^T\int P_N(\sum_{N_1}v_{N_1}u_{\ll N_1}u_{\sim N_1})g\dx\dd t}^2}^{\frac12} \label{eq:local-h1-nablav-1high}\\
&+ \brkb{\sum_{N\in2^\N} N^2\sup_{\norm{g}_{V_\De^2}=1}\absb{\int_0^T\int P_N(\sum_{N_1}v_{N_1}u_{\ll N_1}^2)g\dx\dd t}^2}^{\frac12}. \label{eq:local-h1-nablav-2low}
}

We first estimate the \eqref{eq:local-h1-nablav-2high}. Using the same method as in \eqref{esti:local-h1-I-1} and \eqref{esti:local-h1-I-2}, 
\EQn{\label{esti:local-h1-nablav-2high-1}
\eqref{eq:local-h1-nablav-2high} \lsm & \brkb{\sum_{N\in2^\N} N^2\sup_{\norm{g}_{V_\De^2}=1}\absb{\int_0^T\int P_N(\sum_{N_1}v_{N_1}u_{\sim N_1}^2)g\dx\dd t}^2}^{\frac12} \\
\lsm & \sum_{N_1\in2^\N}\normb{N_1v_{N_1}u_{\sim N_1}^2}_{L_t^1 L_x^2}. 
}
Then, by \eqref{esti:local-h1-nablav-2high-1}, Lemma \ref{lem:strichartz}, H\"older's inequality in $N_1$, and $l_{N_1}^2\hra l_{N_1}^3$, we have
\EQn{\label{esti:local-h1-nablav-2high-2}
\eqref{eq:local-h1-nablav-2high}
\lsm & \sum_{N_1\in2^\N}\normb{N_1v_{N_1}u_{\sim N_1}^2}_{L_t^1 L_x^2} \\
\lsm & T^{\frac12}\sum_{N_1\in2^\N} \normb{\jb{\nabla}^{\frac13}v_{N_1}}_{L_{t,x}^6} \normb{\jb{\nabla}^{\frac13}u_{\sim N_1}}_{L_{t,x}^6} \normb{\jb{\nabla}^{\frac13}u_{\sim N_1}}_{L_{t,x}^6} \\
\lsm & T^{\frac12} \normb{\jb{\nabla}^{\frac13}v_N}_{l_N^2L_{t,x}^6} \normb{\jb{\nabla}^{\frac13}u_{N_1}}_{l_{N_1}^2L_{t,x}^6}^2 \\
\lsm & T^{\frac12} \norm{v}_{Y^s}\brkb{\norm{v}_{Y^s} + \norm{w}_{X^1}}^{2}\lsm T^{\frac12} R^3.
}

Next, we consider \eqref{eq:local-h1-nablav-1high}. Similar as above, by H\"older's inequality and Lemma \ref{lem:strichartz}, 
\EQn{\label{esti:local-h1-nablav-1high-1}
\eqref{eq:local-h1-nablav-1high}\lsm & \brkb{\sum_{N\in2^\N} N^2\sup_{\norm{g}_{V_\De^2}=1}\absb{\int_0^T\int P_N(\sum_{N_1}v_{N_1}u_{\ll N_1}u_{\sim N_1})g\dx\dd t}^2}^{\frac12} \\
\lsm & \brkb{\sum_{N\in2^\N} N^2\normb{ P_N(\sum_{N_1:N\lsm N_1}v_{N_1}u_{\ll N_1}u_{\sim N_1})}_{L_t^{1} L_x^{2}}^2 }^{\frac12} \\
\lsm & \sum_{N_1} N_1 \norm{v_{N_1}u_{\ll N_1}u_{\sim N_1}}_{L_t^{1} L_x^{2}}\\
\lsm & \sum_{N_1} N_1 \norm{v_{N_1}u_{\ll N_1}v_{\sim N_1}}_{L_t^{1} L_x^{2}}+ \sum_{N_1} N_1 \norm{v_{N_1}u_{\ll N_1}w_{\sim N_1}}_{L_t^{1} L_x^{2}}.
}
For the first term in the right hand side of \eqref{esti:local-h1-nablav-1high-1}, by H\"older's inequality,
\EQn{\label{esti:local-h1-nablav-1high-2-1}
\sum_{N_1} N_1 \norm{v_{N_1}u_{\ll N_1}v_{\sim N_1}}_{L_t^{1} L_x^{2}} \lsm & \sum_{N_1\gg N_2} N_1 \norm{v_{N_1}u_{N_2}}_{L_t^{1} L_x^{2}} \norm{v_{\sim N_1}}_{L_{t,x}^{\I}}.
}
By the bilinear Strichartz estimate in Lemma \ref{lem:bilinearstrichartz}, for $N_2\ll N_1$, 
\EQn{\label{esti:local-h1-nablav-1high-2-2}
\norm{v_{N_1}u_{N_2}}_{L_t^{\frac{12}{11}} L_x^{2}} \lsm & N_2^{\frac16} N_1^{-\frac12} \norm{P_{N_1}v_0}_{L_x^2} 
\brkb{\norm{P_{N_2}v_0}_{L_x^2} + \norm{P_{N_2}w}_{U_\De^2}} \\
\lsm & N_2^{-\frac16} N_1^{-\frac56} \normb{\jb{\nabla}^{\frac13}P_{N_1}v_0}_{L_x^2} 
\brkb{\normb{\jb{\nabla}^{\frac13}v_0}_{L_x^2} + N_2^{\frac13} \norm{P_{N_2}w}_{U_\De^2}} \\
\lsm & N_2^{-\frac16} N_1^{-\frac56} \norm{v}_{Z^s} 
\brkb{\norm{v}_{Z^s} +  \norm{w}_{X^1}},
}
which implies
\EQn{\label{esti:local-h1-nablav-1high-2-3}
& \sum_{N_1\gg N_2} N_1 \norm{v_{N_1}u_{N_2}}_{L_t^{1} L_x^{2}} \norm{v_{\sim N_1}}_{L_{t,x}^{\I}} \\
\lsm & T^{\frac1{12}} \sum_{N_1\gg N_2} N_1 \norm{v_{N_1}u_{N_2}}_{L_t^{\frac{12}{11}} L_x^{2}} \norm{v_{\sim N_1}}_{L_{t,x}^{\I}} \\
\lsm & T^{\frac1{12}} \sum_{N_1\gg N_2}   N_1^{\frac16} N_2^{-\frac16} \norm{v}_{Z^s} 
\brkb{\norm{v}_{Z^s} +  \norm{w}_{X^1}}N_1^{-\frac13+\ep} \normb{\jb{\nabla}^{\frac13-\ep}v_{\sim N_1}}_{L_{t,x}^{\I}} \\
\lsm & T^{\frac1{12}} \sum_{N_1\gg N_2}   N_1^{-\frac16+\ep} N_2^{-\frac16} \norm{v}_{Z^s}^2 
\brkb{\norm{v}_{Z^s} +  \norm{w}_{X^1}} \\
\lsm & T^{\frac1{12}}  \norm{v}_{Z^s}^2 
\brkb{\norm{v}_{Z^s} +  \norm{w}_{X^1}}.
}
Then,
\EQn{\label{esti:local-h1-nablav-1high-2-4}
\sum_{N_1} N_1 \norm{v_{N_1}u_{\ll N_1}v_{\sim N_1}}_{L_t^{1} L_x^{2}} \lsm T^{\frac1{12}}  
\brkb{\norm{v}_{Z^s}^3 + R^3}.
}
For the second term in the right hand side of \eqref{esti:local-h1-nablav-1high-1}, by H\"older's inequality,
\EQn{\label{esti:local-h1-nablav-1high-2-5}
\sum_{N_1} N_1 \norm{v_{N_1}u_{\ll N_1}w_{\sim N_1}}_{L_t^{1} L_x^{2}} \lsm & \sum_{N_1} N_1 \norm{v_{N_1}}_{L_{t,x}^\I} \norm{u_{\ll N_1}}_{L_t^1 L_x^\I} \norm{w_{\sim N_1}}_{L_t^\I L_x^2} \\
\lsm & T^{\frac12} \sum_{N_1} N_1^{-\frac13+\ep} \normb{\jb{\nabla}^{\frac13-\ep}v_{N_1}}_{L_{t,x}^\I} \norm{u}_{L_t^2 L_x^\I} N_1\norm{w_{\sim N_1}}_{U_\De^2}\\
\lsm & T^{\frac12} \sum_{N_1} N_1^{-\frac13+\ep} \norm{v}_{Z^s} \brkb{\norm{v}_{L_t^2 L_x^\I} + \norm{w}_{L_t^2L_x^\I}} \norm{w}_{X^1} \\
\lsm & T^{\frac12} \norm{v}_{Z^s} \brkb{\norm{v}_{Y^s} + \norm{w}_{X^1}} \norm{w}_{X^1} \\
\lsm & T^{\frac12}  \brkb{\norm{v}_{Z^s}^3 + R^3}.
}
Therefore, by \eqref{esti:local-h1-nablav-1high-2-4} and \eqref{esti:local-h1-nablav-1high-2-5},
\EQn{\label{esti:local-h1-nablav-1high-2}
\eqref{eq:local-h1-nablav-1high} \lsm & T^{\frac1{12}}  
\brkb{\norm{v}_{Z^s}^3 + R^3} + T^{\frac12}  \brkb{\norm{v}_{Z^s}^3 + R^3} \\
\lsm & T^{\frac1{12}}  
\brkb{\norm{v}_{Z^s}^3 + R^3}.
}

Finally, we consider the main term \eqref{eq:local-h1-nablav-2low}. By frequency support property, 
\EQn{\label{esti:local-h1-nablav-2low-1}
	\eqref{eq:local-h1-nablav-2low}= & \brkb{\sum_{N\in2^\N} N^2\sup_{\norm{g}_{V_\De^2}=1}\absb{\int_0^T\int P_N(\sum_{N_1}v_{N_1}u_{\ll N_1}^2)g\dx\dd t}^2}^{\frac12}\\
	\lsm & \sum_{N\in2^\N} N\sup_{\norm{g}_{V_\De^2}=1}\absb{\int_0^T\int \sum_{N_1}v_{N_1}u_{\ll N_1}^2g_{N}\dx\dd t}\\
	\lsm & \sum_{N\in2^\N} N\sup_{\norm{g}_{V_\De^2}=1}\absb{\int_0^T\int v_{N}u_{\ll N}^2 g_{N}\dx\dd t}\\
	\lsm & \sum_{N_1\loe N_2\ll N} N\sup_{\norm{g}_{V_\De^2}=1}\absb{\int_0^T\int v_{N}g_{N}u_{N_1}u_{N_2}\dx\dd t}.
}
To estimate this, we need to use the bilinear Strichartz estimate for both $g_N u_{N_1}$ and $\nabla v_N u_{N_2}$. Now, we give the estimate for $g_N u_{N_1}$, where we also need to pass $g_N$ into $V_\De^2$ by interpolation. By Lemma \ref{lem:bilinearstrichartz}, for $N_1\ll N$,  
\EQn{\label{esti:local-h1-nablav-2low-bilinear-1}
\norm{g_N u_{N_1}}_{L_t^{\frac{3}{2+\ep}} L_x^2}\lsm\frac{N_1^{\frac23-\frac23\ep}}{N^{\frac12}}\norm{g_N}_{U_\De^2} \brkb{\norm{P_{N_1}v_0}_{L_x^2} + \norm{w_{N_1}}_{U_\De^2}},
}
and by H\"older's inequality and Lemma \ref{lem:strichartz},
\EQn{\label{esti:local-h1-nablav-2low-bilinear-2}
\norm{g_N u_{N_1}}_{L_t^{\frac{3}{2+\ep}} L_x^2}\lsm & \norm{g_N}_{L_t^{\frac83}L_x^4} \norm{u_{N_1}}_{L_t^{\frac{24}{7+8\ep}} L_x^4}\\
\lsm & N_1^{\frac16-\frac23\ep} \norm{g_N}_{U_\De^{\frac83}} \brkb{\norm{P_{N_1}v_0}_{L_x^2} + \norm{w_{N_1}}_{U_\De^2}}.
}
Then, noting that $\ep<\rev{100}(s-\frac13)$, by \eqref{esti:local-h1-nablav-2low-bilinear-1}, \eqref{esti:local-h1-nablav-2low-bilinear-2}, and Lemma \ref{lem:upvp-interpolation}, for $N_1\ll N$,
\EQn{\label{esti:local-h1-nablav-2low-bilinear-3}
\norm{g_N u_{N_1}}_{L_t^{\frac{3}{2+\ep}} L_x^2}\lsm &\frac{N^{\ep}}{N_1^{\ep}}\frac{N_1^{\frac23-\frac23\ep}}{N^{\frac12}}\norm{g_N}_{V_\De^2} \brkb{\norm{P_{N_1}v_0}_{L_x^2} + \norm{w_{N_1}}_{U_\De^2}}\\
\lsm &\frac{N_1^{\frac13-\frac53\ep}}{N^{\frac12-\ep}} N_1^{-100\ep} \brkb{N_1^{\frac13+100\ep}\norm{P_{N_1}v_0}_{L_x^2} + N_1^{\frac13+100\ep} \norm{w_{N_1}}_{U_\De^2}}\\
\lsm & \frac{N_1^{\frac13-100\ep}}{N^{\frac12-\ep}} \brkb{\norm{v}_{Z^s} +  \norm{w}_{X^1}}.
}
Next, we give the estimate for $\nabla v_N u_{N_2}$. Using Lemma \ref{lem:bilinearstrichartz} again, for $N_2\ll N$, we also have
\EQn{\label{esti:local-h1-nablav-2low-bilinear-4}
\norm{\nabla v_N u_{N_2}}_{L_t^{\frac{1}{1-10\ep}} L_x^2}\lsm & N_2^{20\ep}N^{\frac12}\norm{P_N v_0}_{L_x^2}\brkb{\norm{P_{N_2} v_0}_{L_x^2} + \norm{w_{N_2}}_{U_\De^2}}\\
\lsm & N_2^{-\frac13}N^{\frac16-100\ep}\norm{ v}_{Z^s}\brkb{N_2^{\frac13+20\ep}\norm{P_{N_2} v_0}_{L_x^2} + N_2^{\frac13+20\ep} \norm{w_{N_2}}_{U_\De^2}}\\
\lsm & N_2^{-\frac13}N^{\frac16-100\ep}\norm{ v}_{Z^s}\brkb{\norm{v}_{Z^s} + \norm{w}_{X^1}}.
}
Then, we are ready to bound \eqref{eq:local-h1-nablav-2low}. By \eqref{esti:local-h1-nablav-2low-bilinear-3}, \eqref{esti:local-h1-nablav-2low-bilinear-4}, and H\"older's inequality,
\EQn{\label{esti:local-h1-nablav-2low-2}
&N\int_0^T\int g_{N}v_N u_{N_1} u_{N_2}\dx\dd t \\
\lsm &  \norm{g_N u_{N_1}}_{L_t^{\frac{3}{2+\ep}} L_x^2} \norm{\nabla v_N u_{N_2}}_{L_t^{\frac{1}{1-10\ep}} L_x^2}^{\frac13} \norm{\nabla v_N}_{L_t^{\frac{2}{9\ep}} L_x^\I}^{\frac23} \norm{u_{N_2}}_{L_t^\I L_x^2}^{\frac23} \\
\lsm & T^{\frac{\ep}{100}} \frac{N_1^{\frac13-100\ep}}{N^{\frac12-\ep}}  \brkb{N_2^{-\frac13}N^{\frac16-100\ep}}^{\frac13} N^{\frac23\brko{\frac23+\ep}} \normb{\jb{\nabla}^{\frac13-\ep} v_N}_{L_t^{\frac{1}{\ep}} L_x^\I}^{\frac23} \\
& \cdot N_2^{-\frac{2}{9}} \normb{\jb{\nabla}^{\frac13}u_{N_2}}_{L_t^\I L_x^2}^{\frac23} \brkb{\norm{v}_{Z^s} +  \norm{w}_{X^1}} \norm{ v}_{Z^s}^{\frac13}\brkb{\norm{v}_{Z^s} + \norm{w}_{X^1}}^{\frac13}\\
\lsm & T^{\frac{\ep}{100}} N_1^{\frac13-100\ep} N_2^{-\frac13} N^{-20\ep} \norm{v}_{Y^s}^{\frac23} \norm{ v}_{Z^s}^{\frac13} \brkb{\norm{v}_{Z^s}+\norm{w}_{X^1}}^{2} .
}
Note that 
\EQ{
\sum_{N_1\loe N_2\ll N} N_1^{\frac13-100\ep} N_2^{-\frac13} N^{-20\ep}\lsm 1,
}
then by \eqref{esti:local-h1-nablav-2low-1} and \eqref{esti:local-h1-nablav-2low-2},
\EQn{\label{esti:local-h1-nablav-2low}
\eqref{eq:local-h1-nablav-2low}\lsm & \sum_{N_1\loe N_2\ll N} N\sup_{\norm{g}_{V_\De^2}=1}\absb{\int_0^T\int v_{N}g_{N}u_{N_1}u_{N_2}\dx\ds}
 \\
\lsm & T^{\frac{\ep}{100}}\sum_{N_1\loe N_2\ll N} N_1^{\frac13-100\ep} N_2^{-\frac13} N^{-20\ep} \norm{v}_{Y^s}^{\frac23} \norm{ v}_{Z^s}^{\frac13} \brkb{\norm{v}_{Z^s}+\norm{w}_{X^1}}^{2}\\
\lsm & T^{\frac{\ep}{100}} R^{\frac23} \norm{ v}_{Z^s}^{\frac13}\brkb{\norm{v}_{Z^s} + R}^{2}\lsm T^{\frac{\ep}{100}} \brkb{\norm{v}_{Z^s}^{3} + R^{3}}.
}

Therefore, by \eqref{esti:local-h1-nablav-2high-2}, \eqref{esti:local-h1-nablav-1high-2}, and \eqref{esti:local-h1-nablav-2low}, noting also that $T<1$,
\EQn{\label{esti:local-h1-II}
	II\loe &  \eqref{eq:local-h1-nablav-2high} +  \eqref{eq:local-h1-nablav-1high} +  \eqref{eq:local-h1-nablav-2low} \\
	\loe &  C\brkb{T^{\frac12} + T^{\frac{1}{12}} + T^{\frac{\ep}{100}}} \brkb{\norm{v}_{Z^s}^{3} + R^{3}} \\
	\loe & C T^{\frac{\ep}{100}} \brkb{\norm{v}_{Z^s}^{3} + R^{3}}
	\loe  \frac12 R.
}
Then, by \eqref{esti:local-h1-I} and \eqref{esti:local-h1-II}, we have
\EQ{
	\normb{\int_0^t e^{i(t-s)\De}(|u|^2u)\ds}_{ X^{1}}\loe I + II \loe R.
}
This proves \eqref{eq:local-h1-main}.
\end{proof}

\vskip 1.5cm

\section{Global well-posedness and scattering}\label{sec:gwp}

\vskip .5cm

\subsection{Reduction to the deterministic problem}\label{sec:reduction-global}
Noting that $s>\frac{17}{40}$, fix a suitable small $\ep>0$ such that
\EQn{\label{defn:ep-gwp}
0<\ep<\min\fbrkb{\frac13(3s-1),\frac16(s-\frac{17}{40})}.
}
Let $\wt Y^s(I)$ be defined by its norm
\EQn{\label{defn:ys-tilde-norm}
\norm{v}_{\wt Y^s(I)}:=\norm{v}_{Y^s(I)} + \normb{\jb{\nabla}^{s+\frac12-\ep}v_N}_{l_N^2L_t^2 L_x^\I(2^\N\times I\times \R^3)}.
}
Recall that
\EQ{
	\norm{v}_{Z^s(I)}=\norm{\jb{\nabla}^{s-\ep}P_Nv}_{l_N^2L_t^\I L_x^\I(2^\N\times I\times\R^3)} + \norm{\jb{\nabla}^{s}P_Nv}_{l_N^2L_t^\I L_x^2(2^\N\times I\times\R^3)}.
}
\begin{prop}\label{prop:global-derterministic}
Let $\frac{17}{40}< s\loe\frac12$, $A>0$, and $\ep$ be sufficiently small satisfying \eqref{defn:ep-gwp}. Then, there exists $N_0=N_0(A)\gg1$ such that the following properties hold.  Let $u_0\in H_x^{s}$,   $v_0$ satisfy that $\supp\wh v_0\subset \fbrk{\xi\in\R^3:|\xi|\goe \frac12 N_0}$,  and $w_0=u_0-v_0$. Moreover, let $v=e^{it\Delta}v_0$ and $w=u-v$. Suppose that  $v\in \wt Y^s\cap Z^s(\R)$, $w_0\in H^1$ such that
\EQ{
	\norm{u_0}_{H_x^s}+\norm{v}_{\wt Y^s\cap Z^s(\R)}\loe A\text{, and }E(w_0) \loe A N_0^{2(1-s)}.
}
Then, there exists a solution $u$ of \eqref{eq:nls-3D} on $\R$ with $w\in C(\R;H_x^{1})$. Furthermore, there exists $u_{\pm}\in H_x^1$ such that
\EQ{
\lim_{t\ra\pm\I}\norm{u(t)-v(t)-e^{it\De}u_{\pm}}_{H_x^1}=0.
}
\end{prop}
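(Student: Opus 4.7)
The plan is to run a bootstrap argument on the energy $E(w(t))$ of the nonlinear remainder $w=u-v$. Since $v=e^{it\De}v_0$ solves the free Schr\"odinger equation, $w$ satisfies
\EQ{
i\pd_t w+\De w=|v+w|^2(v+w),\qquad w(0)=w_0,
}
and Proposition \ref{prop:local-H1} furnishes a local $H_x^1\cap X^1$-solution near $t=0$. By a standard continuation criterion it suffices to establish an a priori bound $\sup_{t\in I}E(w(t))\lsm_A N_0^{2(1-s)}$ on every interval $I\ni 0$ of existence. Since $E(w_0)\loe A\,N_0^{2(1-s)}$, there is room to bootstrap: assuming $\sup_{t\in I}E(w(t))\loe C_1(A)\,N_0^{2(1-s)}$, I aim for the strictly better conclusion $\sup_{t\in I}E(w(t))\loe \tfrac12 C_1(A)\,N_0^{2(1-s)}$, which will follow once the energy increment on $I$ can be shown to contain an extra factor $N_0^{-\al}$ with $\al>0$. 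Choosing $N_0=N_0(A)$ large enough then closes the continuity argument.

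Under the bootstrap hypothesis, I would derive global-in-$I$ space-time estimates for $w$ in a three-step ladder. (i) A perturbed interaction Morawetz identity, with the $v$-dependent source terms controlled by the $\wt Y^s\cap Z^s$ bounds on $v$ (and by the high-frequency support of $v$), produces a $\dot H^{1/4}$-critical bound $\|w\|_{L_{t,x}^4(I\times\R^3)}\lsm_A N_0^{c_0(1-s)}$. (ii) Feeding this back into the Duhamel formula for $w$ and pairing with the duality of Lemma \ref{lem:upvpduality} together with the multi-scale bilinear estimate of Lemma \ref{lem:bilinearstrichartz}, I upgrade to the $\dot H^{1/2}$-critical quantity
\EQ{
\brkb{\sum_{N\in 2^\N} N\,\|P_N w\|_{U_\De^2(I;L_x^2)}^2}^{1/2}\lsm_A N_0^{c_1(1-s)}.
}
(iii) Iterating the same duality argument, with the vector-valued maximal estimate (Lemma \ref{lem:HL-boundedness}) and Lemma \ref{lem:linfty-littewoodpaley} handling the critical summations, I push the scaling up to any $l\in(\tfrac12,1]$:
\EQ{
\brkb{\sum_{N\in 2^\N} N^{2l}\,\|P_N w\|_{U_\De^2(I;L_x^2)}^2}^{1/2}\lsm_A N_0^{c_l(1-s)}.
}
Working in $U_\De^2$ on the entire interval $I$ (rather than on short subintervals) is crucial, so that derivatives can be transferred freely via duality with $V_\De^2$.

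The heart of the proof is the energy increment. Differentiating $E(w)$, integrating by parts, and performing a high-low frequency decomposition with threshold $N_0$, the dominant term takes the form
\EQ{
\mathrm{(Main)}\;=\;\int_I\!\!\int_{\R^3}\nabla w_{\rm hi}\,\nabla v_{\rm hi}\,w_{\rm low}^2\,\dx\dt.
}
A naive H\"older split would require $\|\nabla v_{\rm hi}\|_{L_t^2 L_x^\I}$, which is not available when $s<\tfrac12$; the $\wt Y^s$-norm only controls $\||\nabla|^{s+\frac12-}v_N\|_{l_N^2 L_t^2 L_x^\I}$, which falls short of a full derivative. The key observation is that there is slack in the H\"older split, which I can exploit by applying Lemma \ref{lem:bilinearstrichartz} to the pair $\nabla v_{\rm hi}\cdot w_{\rm low}$: this trades a fraction of the high-frequency derivative on $v$ for a low-frequency loss on $w$, which is then absorbed by the $\dot H^l$-critical space-time bound on $w$ obtained in the previous step. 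A careful exponent count, in which the condition $s>\tfrac37$ is precisely what forces the overall power of $N_0$ to be strictly negative, yields $|\mathrm{(Main)}|\lsm_A N_0^{-\al}N_0^{2(1-s)}$ for some $\al>0$. This is the principal obstacle of the proof; the remaining terms in the increment (containing more than one factor of $v$, or with derivatives only on $w$) are strictly better and are handled by the same toolkit with room to spare.

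Once the bootstrap closes and delivers $\sup_{t\in\R}E(w(t))\lsm_A N_0^{2(1-s)}$ together with the global $\dot H^1$-scale space-time bound on $w$, scattering in $H_x^1$ is routine: the same nonlinear estimates show that the Duhamel tails $\int_t^{\pm\I}e^{-is\De}\bigl(|u|^2u-|v|^2v\bigr)(s)\,ds$ are Cauchy in $H_x^1$ as $t\to\pm\I$, and I set $u_{\pm}$ to be these limits. The whole difficulty therefore reduces to the single multilinear term $\mathrm{(Main)}$; the combination of the $U_\De^2$-based global $\dot H^l$-bound on $w$ with the bilinear Strichartz gain on $\nabla v_{\rm hi}\cdot w_{\rm low}$ is what makes the super-critical regularity threshold $s>\tfrac37$ accessible.
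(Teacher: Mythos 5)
Your high-level architecture matches the paper's: Proposition~\ref{prop:local-H1} for local theory, a bootstrap on $E(w)$ closed by showing the increment is $\lsm N_0^{-\al}N_0^{2(1-s)}$, the perturbed interaction Morawetz estimate feeding $U_\De^2$-based space-time bounds at scaling $l\in(\frac12,1]$ (Lemmas~\ref{lem:inter-mora}--\ref{lem:transfer-2}), and a Duhamel/Cauchy argument for scattering — this is exactly Sections 5.2--5.4 of the paper.

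However, the decisive bilinear step in the main energy term is described with the roles reversed, and in that form the exponent count does not close. You propose applying Lemma~\ref{lem:bilinearstrichartz} to the pair $\nabla v_{\rm hi}\cdot w_{\rm low}$; the paper applies it to $\nabla w_{\rm hi}\cdot w_{\rm low}$ (see \eqref{esti:energy-bound-1-2w-low-bilinear}), raised to a small fractional power $\theta=\frac17-10\ep$, while $\nabla v_N$ is kept at power one in $L_t^2L_x^\infty$. The distinction is not cosmetic. At $q=r=2$ the bilinear estimate gives $N^{-1/2}$ together with the prefactor $N\norm{P_Nv_0}_{L_x^2}\lsm N^{1-s}$ when the high-frequency slot is $\nabla v_N$, so the net high-frequency cost is $N^{1/2-s}$ — which is no better (up to $\ep$) than what $\norm{\nabla v_N}_{L_t^2L_x^\infty}\lsm N^{1/2-s+\ep}\norm{v}_{\wt Y^s}$ already yields. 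After your Hölder split the total $N$-power is therefore $\approx\frac12-s\ge 0$, and the dyadic sum over $N\gsm N_0$ diverges. Applying the bilinear instead to $\nabla w_N\cdot w_{\rm low}$ produces an \emph{independent} $N^{-\theta/2}$ factor on top of the unavoidable $N^{1/2-s+\ep}$ from $\nabla v_N$, giving a total $N$-exponent $\frac37-s+O(\ep)<0$ precisely when $s>\frac37$; the accompanying $N_0^{(4+\ep)\theta(1-s)}$-loss from $\norm{w}_{X^1}$ inside the bilinear is affordable only because $\theta$ is small, which is why $\theta\approx\frac17$ threads both constraints. This is the content of the deferred ``careful exponent count,'' and as stated your choice of bilinear pair cannot close it.
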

We will give the proof of Proposition \ref{prop:global-derterministic} in Sections \ref{sec:space-time}, \ref{sec:energy-bound}, and \ref{sec:global-scattering}. Now we prove Theorem \ref{thm:global} assuming that Proposition \ref{prop:global-derterministic} holds.
\begin{proof}[Proof of Theorem \ref{thm:global}]
Let $N_0=N_0(M,\norm{f}_{H_x^s})\in 2^\N$ that will be defined later, and make a high-low frequency decomposition for the initial data
\EQ{
	u(t)=e^{it\De}P_{\goe N_0}f^\om + w(t),
}
then $w$ satisfies the equation \eqref{eq:nls-w} with
\EQ{
	u_0=f^\om\text{, }v_0=P_{\goe N_0}f^\om\text{, }w_0=P_{\loe N_0 }f^\om\text{, and }v=e^{it\De}P_{\goe N_0}f^\om.
}

Since $f$ is radial, by Corollary \ref{cor:Y-norm-Z-norm}, boundedness of the operator $P_{\goe N_0}$, Proposition \ref{prop:nabla2infty}, and Lemma \ref{lem:probability-estimate}, we have
\EQn{\label{esti:omM-1}
\PP\brkb{\fbrkb{\om\in\Om:\norm{u_0}_{H_x^s}+ \norm{v}_{\wt Y^s\cap Z^s(\R)}>\la}}\lsm e^{-C\la^2\norm{f}_{H_x^s}^{-2}}.
}

For any $p\goe 2$, we have
\EQ{
\norm{w_0}_{L_\om^p \dot H_x^1}\lsm & \normb{ \sum_{k\in\Z^3}g_k(\om) \nabla P_{\loe N_0}f_k}_{L_x^2 L_\om^p }\\
\lsm & \sqrt{p}\norm{\nabla P_{\loe N_0}f_k}_{L_x^2 l_{k\in\Z^3}^2 } \\
\lsm & \sqrt{p}\norm{\nabla P_{\loe N_0}f}_{L_x^2}\lsm \sqrt{p} N_0^{1-s} \norm{f}_{H_x^s}.
}
For any $p\goe 2$, we also have
\EQ{
	\norm{w_0}_{L_\om^p L_x^4}\lsm \sqrt{p} \norm{\bx_kP_{\loe N_0}f}_{L_x^4 l_k^2}\lsm \sqrt{p} \norm{f}_{L_x^2}.
}
Then, by Lemma \ref{lem:probability-estimate},
\EQn{\label{esti:omM-2}
\PP\brkb{\fbrkb{\om\in\Om: \frac{1}{N_0^{1-s}}\norm{w_0}_{ \dot H_x^1} + \norm{w_0}_{L_x^4}\goe\la}} \lsm e^{-C\la^2\norm{f}_{H_x^s}^{-2}}.
}

For any $M\goe 1$, let $\wt \Om_M$ be defined by
\EQn{\label{defn:omega-M}
\wt \Om_M=\Big\{\om\in\Om: &\norm{u_0}_{H_x^s} + \norm{v}_{\wt Y^s\cap Z^s(\R)}< M \norm{f}_{H_x^s},\\
&\frac{1}{N_0^{1-s}}\norm{w_0}_{ \dot H_x^1} + \norm{w_0}_{L_x^4}<M\norm{f}_{H_x^s}\Big\}.
}
Therefore, by \eqref{esti:omM-1} and \eqref{esti:omM-2}, we have
\EQn{\label{Om-Mc}
\PP(\wt \Om_M^c) \lsm e^{-CM^2}.
}
For any $\om \in\wt \Om_M$, we have $\norm{v}_{\wt Y^s\cap Z^s(\R)}< M \norm{f}_{H_x^s}$, and
\EQ{
E(w_0)\loe CM^2N_0^{2(1-s)}\cdot\max\fbrk{M^2\norm{f}_{H_x^s}^4,1}.
}

Therefore, for any $M>1$ and any $\om\in\wt \Om_M$, let 
\EQ{
A=A(M,\norm{f}_{H_x^s}):=\max\fbrkb{M \norm{f}_{H_x^s},CM^2\cdot\max\fbrko{M^2\norm{f}_{H_x^s}^4,1}},
}
then we have $v=e^{it\De}P_{\goe N_0}f^\om$,
\EQ{
\norm{u_0}_{H_x^s} +\norm{v}_{\wt Y^s\cap Z^s(\R)}\loe A\text{, and }E(w_0)\loe AN_0^{2(1-s)}.
} 
Therefore, we can apply Proposition \ref{prop:global-derterministic}. Let $N_0$ depend on $A$ as in the statement of Proposition \ref{prop:global-derterministic}, and we obtain a global solution $w$ that scatters. Then, for any $\om\in\wt \Om=\cup_{M>1}\wt \Om_M$, we can also derive that \eqref{eq:nls-w} admits a global solution $w$ that scatters. By \eqref{Om-Mc},  we have that  $\PP(\wt \Om)=1$. Then for almost every $\om\in\Om$, we obtain the global well-posedness and scattering for \eqref{eq:nls-w}. This finishes the proof of Theorem \ref{thm:global}.

\end{proof}

\subsection{Global space-time estimates}\label{sec:space-time}

\begin{lem}[Interaction Morawetz]\label{lem:inter-mora}
Let $w\in C([0,T];H_x^1)$ be the solution of perturbation equation \eqref{eq:nls-w}. Then, we have
\EQn{\label{eq:inter-mora}
\norm{w}_{L_{t,x}^4}^4 \lsm \norm{w}_{L_t^\I L_x^2}^2 \norm{w}_{L_t^\I \dot H_x^{\frac12}}^2 + \norm{\nabla w}_{L_t^\I L_x^2}^2 \norm{w}_{L_t^\I L_x^2}^4 \norm{v}_{L_t^2 L_x^\I}^2 + \norm{v}_{L_{t,x}^4}^4,
}
where all the space-time norms are taken over $[0,T]\times \R^3$.
\end{lem}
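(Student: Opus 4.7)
The approach is to adapt the classical interaction Morawetz method of Colliander--Keel--Staffilani--Takaoka--Tao to the perturbed equation $i\pd_t w+\De w=|u|^2 u$, carefully tracking the error terms produced by $v=u-w$. I would introduce the standard interaction Morawetz functional
\EQ{
M(t):=2\iint_{\R^3\times\R^3}|w(t,y)|^2\,\frac{x-y}{|x-y|}\cdot\im\brko{\wb w\nabla w}(t,x)\dx\dy,
}
differentiate in time, and split the nonlinearity as $|u|^2 u=|w|^2 w+R$ with
\EQ{
R=|u|^2 u-|w|^2 w=O(v|w|^2)+O(v^2 w)+O(v^3).
}
The $|w|^2 w$ contribution to $\frac{d}{dt}M$ is handled by the classical Morawetz identity, producing a non-negative term whose time integral controls $\norm{w}_{L_{t,x}^4(\mbrko{0,T}\times\R^3)}^4$; the remainder $R$ produces an error $\mathcal{E}(t)$ coming from both the momentum bracket $\{R,w\}_p$ and the mass-flux correction $\im(\wb w R)$.

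\textbf{Boundary term.} Integrating in time from $0$ to $T$, the boundary contribution is controlled via the standard pointwise bound
\EQ{
|M(T)-M(0)|\lsm\norm{w}_{L_t^\I L_x^2}^2\norm{w}_{L_t^\I\dot H_x^{1/2}}^2,
}
which follows from Plancherel/Cauchy--Schwarz together with Hardy's inequality in the $y$-variable. This accounts for the first term of \eqref{eq:inter-mora}.

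\textbf{Error terms.} The pure $v^3$-contribution to $\mathcal{E}$ is handled by H\"older in $x$ together with Hardy in $y$, producing the third term $\norm{v}_{L_{t,x}^4}^4$ of \eqref{eq:inter-mora} (after absorbing a small multiple of $\norm{w}_{L_{t,x}^4}^4$ into the left-hand side). For the mixed $v|w|^2$ and $v^2 w$ contributions, I would integrate by parts in $x$ so that every derivative generated by the Morawetz computation lands on $w$ rather than on $v$, and then apply H\"older by distributing the norms as $\norm{\nabla w}_{L_t^\I L_x^2}$, $\norm{v}_{L_t^2 L_x^\I}$, two copies of $\norm{w}_{L_t^\I L_x^2}$ (one from $w(y)$, one from the remaining $w(x)$), and one factor of $\norm{w}_{L_{t,x}^4}^2$. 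A final application of Young's inequality $ab\loe\ep a^2+\ep^{-1}b^2$ absorbs the $\ep\norm{w}_{L_{t,x}^4}^4$ piece back into the left-hand side and produces precisely the middle term $\norm{\nabla w}_{L_t^\I L_x^2}^2\norm{w}_{L_t^\I L_x^2}^4\norm{v}_{L_t^2 L_x^\I}^2$ of \eqref{eq:inter-mora}.

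\textbf{Main obstacle.} The principal difficulty is the integration-by-parts bookkeeping for the mixed error terms: every appearance of $v$ must be routed into $\norm{v}_{L_t^2 L_x^\I}$ or $\norm{v}_{L_{t,x}^4}$, and never into a derivative norm of $v$, since at regularity $s<\tfrac12$ the $\dot H_x^1$-norm of $v$ is only available at the prohibitive cost of a factor of $N_0^{1-s}$, which would destroy any subsequent bootstrap. The specific form of the middle term---the power $2$ on $\norm{v}_{L_t^2 L_x^\I}$ and its pairing with $\norm{\nabla w}_{L_t^\I L_x^2}^2\norm{w}_{L_t^\I L_x^2}^4$---is then dictated by this no-derivative-on-$v$ constraint together with the final Young-inequality absorption step.
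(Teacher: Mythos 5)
Your proposal is correct and follows essentially the same route as the paper: the interaction Morawetz functional $M(t)=\iint\frac{x-y}{|x-y|}\cdot p(x)m(y)\dx\dy$ with $e=|u|^2u-|w|^2w$, the classical boundary bound $|M(t)|\lsm\norm{w}_{L^2}^2\norm{w}_{\dot H^{1/2}}^2$, and the $L_{t,x}^4$ recovery from $\nabla\De m$, with all remainder terms estimated by H\"older and Hardy and absorbed via Young's inequality. One minor imprecision: the $v^3$-contribution does not produce $\norm{v}_{L_{t,x}^4}^4$ \emph{directly}. Because every Morawetz error term necessarily carries $w$ or $\nabla w$ factors from $m$ and $p$, H\"older on the $v^3$ piece yields $\norm{\nabla w}_{L_t^\I L_x^2}\norm{w}_{L_t^\I L_x^2}^2\norm{v}_{L_t^2 L_x^\I}\norm{v}_{L_{t,x}^4}^2$, and the $\norm{v}_{L_{t,x}^4}^4$ term only emerges after applying Young against the remaining $w$-norms (and likewise produces a copy of the middle term); the $\norm{w}_{L_{t,x}^4}^4$ absorption you mention is instead needed for the $vw^2$ and $v^2w$ pieces. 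In fact the paper handles this more compactly by bounding all pieces at once via $\norm{e}_{L_t^1L_x^2}\lsm\norm{v}_{L_t^2L_x^\I}\brko{\norm{v}_{L_{t,x}^4}^2+\norm{w}_{L_{t,x}^4}^2}$ before a single Young step, rather than term by term. Also, the integration-by-parts bookkeeping you flag as the main obstacle is less delicate than suggested: the Morawetz identity naturally keeps every derivative on $w$ (through $p$, $\re(\wb e\nabla w)$, and $\nabla\cdot\frac{x-y}{|x-y|}$), so no derivative ever threatens to land on $v$.
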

\begin{proof}
Recall that $w$ satisfies
\EQ{
i\pd_t w + \De w=|w|^2w+e,
}
where $e=|u|^2u-|w|^2w$.
Denote that 
$$
m(t,x)=\frac12|w(t,x)|^2;\quad 
p(t,x)=\frac12\im \brko{\wb w(t,x)\nabla w(t,x)}.
$$
Then, we have
\EQn{\label{eq:local-mass-flow}
\pd_tm=-2\nabla\cdot p+ \im\brk{e\bar w},
}
and
\EQn{\label{eq:local-momentum-flow}
\pd_tp = & -\re \nabla\cdot \brko{\nabla \wb w \nabla w}  -\frac14 \nabla \brkb{|w|^4} + \half 1 \nabla \De m
+ \re \brk{\wb e\nabla w}-\frac12\re\nabla\brk{\wb w e}.
}
Moreover, we note that 
$$
\partial_j\Big(\frac{x_k}{|x|}\Big)=\frac{\delta_{jk}}{|x|}-\frac{x_jx_k}{|x|^3};\quad
\nabla\cdot \frac{x}{|x|}=\frac2{|x|};\quad \De\nabla\cdot \frac{x}{|x|}=-8\pi\delta(x).
$$
Let 
\EQ{
M(t):= \int\!\!\int_{\R^{3+3}} \frac{x-y}{|x-y|}\cdot p(t,x)\> m(t,y)\dx\dy,
}
then by \eqref{eq:local-mass-flow} and \eqref{eq:local-momentum-flow}, we have the interaction Morawetz identity
\begin{subequations}
\EQnn{
 \pd_t M(t)
= &\iint_{\R^{3+3}} \frac{x-y}{|x-y|}\cdot \partial_t p(t,x)\> m(t,y)\dx\dy\nonumber\\
& \quad + \iint_{\R^{3+3}} \frac{x-y}{|x-y|}\cdot p(t,x)\> \partial_t m(t,y)\dx\dy\nonumber\\
= &\iint_{\R^{3+3}} \frac{x-y}{|x-y|}\cdot \Big( -\re \nabla\cdot \brko{\nabla \wb w \nabla w}  -\frac14 \nabla \brkb{|w|^4}\Big)(t,x)\> m(t,y)\dx\dy\label{esti:inter-mora-angular-1}\\
& \quad -2\iint_{\R^{3+3}} \frac{x-y}{|x-y|}\cdot p(t,x)\> \nabla\cdot p(t,y)\dx\dy\label{esti:inter-mora-angular-2}\\
&\quad +\frac12\iint_{\R^{3+3}} \frac{x-y}{|x-y|}\cdot \nabla \De m(t,x)\> m(t,y)\dx\dy\label{esti:inter-mora-angular-3}\\
&\quad +\iint_{\R^{3+3}} \frac{x-y}{|x-y|}\cdot  p(t,x)\>  \im\brk{e\bar w}(t,y)\dx\dy\label{esti:inter-mora-remainder-1}\\
& \quad + \iint_{\R^{3+3}} \frac{x-y}{|x-y|}\cdot \re \brk{\wb e\nabla w}(t,x)\> m(t,y)\dx\dy\label{esti:inter-mora-remainder-2}\\
& \quad + \iint_{\R^{3+3}} \frac1{|x-y|}\cdot \re \brk{\wb e w}(t,x)\> m(t,y)\dx\dy.\label{esti:inter-mora-remainder-3}
}
\end{subequations}
Note that by the classical argument in \cite{CKSTT04CPAM}, we have
\EQ{
\eqref{esti:inter-mora-angular-1} + \eqref{esti:inter-mora-angular-2} \goe 0,
}
and
\EQ{
\eqref{esti:inter-mora-angular-3}\gsm \norm{w(t)}_{L_x^4}^4.
}
Moreover,
\EQ{
\sup_{t\in[0,T]} M(t)\lsm \norm{w}_{L_t^\I L_x^2}^2 \norm{w}_{L_t^\I \dot H_x^{\frac12}}^2.
}
Then, integrating over $[0,T]$, it holds that
\EQ{
C\norm{w}_{L_{t,x}^4}^4\loe M(T)-M(0)+ \int_0^T |\eqref{esti:inter-mora-remainder-1}| + |\eqref{esti:inter-mora-remainder-2}| +|\eqref{esti:inter-mora-remainder-3}| \dd t,
}
thus
\EQ{
\norm{w}_{L_{t,x}^4}^4\lsm \norm{w}_{L_t^\I L_x^2}^2 \norm{w}_{L_t^\I \dot H_x^{\frac12}}^2+ \int_0^T |\eqref{esti:inter-mora-remainder-1}| + |\eqref{esti:inter-mora-remainder-2}| +|\eqref{esti:inter-mora-remainder-3}|\dd t.
}
By H\"older's inequality and Lemma \ref{lem:hardy}, we have
\EQ{
&\int_0^T|\eqref{esti:inter-mora-remainder-1}| + |\eqref{esti:inter-mora-remainder-2}| +|\eqref{esti:inter-mora-remainder-3}|\dd t\\
\lsm & \norm{\nabla w}_{L_t^\I L_x^2} \norm{w}_{L_t^\I L_x^2}^2 \norm{e}_{L_t^1 L_x^2}\\
\lsm & \norm{\nabla w}_{L_t^\I L_x^2} \norm{w}_{L_t^\I L_x^2}^2 \norm{v}_{L_t^2 L_x^\I}\brkb{\norm{v}_{L_{t,x}^4}^2 + \norm{w}_{L_{t,x}^4}^2}.
}

Therefore, we have
\EQ{
\norm{w}_{L_{t,x}^4}^4\lsm \norm{w}_{L_t^\I L_x^2}^2 \norm{w}_{L_t^\I \dot H_x^{\frac12}}^2 + \norm{\nabla w}_{L_t^\I L_x^2} \norm{w}_{L_t^\I L_x^2}^2 \norm{v}_{L_t^2 L_x^\I}\brkb{\norm{v}_{L_{t,x}^4}^2 + \norm{w}_{L_{t,x}^4}^2}.
}
Then, by Young's inequality, we have
\EQ{
\norm{w}_{L_{t,x}^4}^4\lsm \norm{w}_{L_t^\I L_x^2}^2 \norm{w}_{L_t^\I \dot H_x^{\frac12}}^2 + \norm{\nabla w}_{L_t^\I L_x^2}^2 \norm{w}_{L_t^\I L_x^2}^4 \norm{v}_{L_t^2 L_x^\I}^2 + \norm{v}_{L_{t,x}^4}^4.
}
This completes the proof of this lemma.
\end{proof}
However, the use of $L_{t,x}^4$-norm is not enough for our argument. 
First, we can update our space-time estimates to $H^\frac12$-level.
\begin{lem}[$H^\frac12$-regular promotion]\label{lem:transfer}
Let $0< \ep\ll1$ satisfy \eqref{defn:ep-gwp}, and $w\in C([0,T];H_x^1)$ be the solution of the perturbation equation \eqref{eq:nls-w}. Then, for any $0\loe l\loe \frac12$ and $L_x^2$-admissible $(q,r)$, 
\EQ{
\normb{N^{l} w_N}_{l_N^2L_t^qL_x^r} \lsm & \norm{w_0}_{H_x^{l}} + \brkb{\norm{w}_{L_t^\I  H_x^{l+\frac12}}+ \normb{\jb{\nabla}^{l}v_N}_{l_N^2L_t^2 L_x^{\I}}}\brkb{\norm{w}_{L_{t,x}^4}^2 +\norm{v}_{Y^s}^2},
}
where all the space-time norms are taken over $[0,T]\times \R^3$.
\end{lem}
\begin{proof}
By Minkowski's inequality, Lemmas \ref{lem:strichartz} and \ref{lem:littlewood-paley}, we have
\EQn{\label{esti:transfer-1}
\normb{N^{l} P_Nw}_{l_N^2L_t^qL_x^r} \lsm & \norm{w_0}_{\dot H_x^{l}} + \normb{P_N\jb{\nabla}^{l}(|u|^2u)}_{l_N^2L_t^2 L_x^{\frac65}+ l_N^2L_t^1 L_x^2}\\
\lsm & \norm{w_0}_{\dot H_x^{l}} + \normb{P_N\jb{\nabla}^{l}(|w|^2w)}_{L_t^2 L_x^{\frac65}l_N^2} \\
&+ \normb{P_N\jb{\nabla}^{l}(|u|^2u-|w|^2w)}_{l_N^2L_t^2 L_x^{\frac65}+ l_N^2L_t^1 L_x^2} \\
\lsm & \norm{w_0}_{\dot H_x^{l}} + \normb{\jb{\nabla}^{l}(|w|^2w)}_{L_t^2 L_x^{\frac65}} \\
&+ \normb{P_N\jb{\nabla}^{l}(|u|^2u-|w|^2w)}_{l_N^2L_t^2 L_x^{\frac65}+ l_N^2L_t^1 L_x^2} \\
\lsm & \norm{w_0}_{\dot H_x^{l}} + \normb{\jb{\nabla}^lw}_{L_t^\I L_x^3} \norm{w}_{L_{t,x}^4}^2 +I + II,
}
where
\EQ{
I:= \normb{P_N\jb{\nabla}^{l}\big(\sum_{N_1:N\lsm N_1}w_{N_1}v_{\loe N_1}O(u_{\loe N_1}+v_{\loe N_1})\big)}_{L_t^2 L_x^{\frac65}l_N^2}
}
and
\EQ{ II:=\normb{P_N\jb{\nabla}^{l}(\sum_{N_1:N\lsm N_1}v_{N_1}u_{\loe N_1} u_{\loe N_1}}_{L_t^1 L_x^2l_N^2}.
}
Here, $O(u_{\loe N_1}+v_{\loe N_1})$ denotes a linear combination of $u_{\loe N_1}$ and $v_{\loe N_1}$.
By H\"older's inequality, we have
\EQn{\label{esti:transfer-2}
I
\lsm & \sum_{N_1}\normb{\jb{\nabla}^{l}\big(w_{N_1}v_{\loe N_1}O(u_{\loe N_1}+v_{\loe N_1})\big)}_{L_t^2 L_x^{\frac65}} \\
\lsm & \sum_{N_1}\normb{N_1^{l}w_{N_1}v_{\loe N_1}O(u_{\loe N_1}+v_{\loe N_1})}_{L_t^2 L_x^{\frac65}} \\
\lsm & \sum_{N_1}\normb{N_1^{l}w_{N_1}}_{L_t^\I L_x^2} \norm{v_{\loe N_1}}_{L_t^4 L_x^{12}} \norm{O(u_{\loe N_1}+v_{\loe N_1})}_{L_{t,x}^4}\\
\lsm & \normb{\jb{\nabla}^{l+\frac12} w}_{L_t^\I L_x^2} \norm{v}_{Y^s} \brkb{ \norm{w}_{L_{t,x}^4} + \norm{v}_{Y^s}},
}
and by Lemma \ref{lem:linfty-littewoodpaley},
\EQn{\label{esti:transfer-3}
II\lsm & \normb{N_1^lv_{N_1}u_{\loe N_1}u_{\loe N_1}}_{L_t^1 L_x^2l_{N_1}^2} \\
\lsm & \normb{\normo{N_1^{l}v_{N_1}}_{l_{N_1}^2}\sup_{N_1\in2^\N}|u_{\loe N_1}|^2}_{L_t^1 L_x^{2}} \\
\lsm & \normb{N_1^{l}v_{N_1}}_{L_t^2 L_x^\I l_{N_1}^2}\normb{\sup_{N_1\in2^\N}|u_{\loe N_1}|}_{L_{t,x}^4}^2 \\
\lsm & \normb{N_1^{l}v_{N_1}}_{l_{N_1}^2L_t^2 L_x^\I} \brkb{\norm{w}_{L_{t,x}^4} +\norm{v}_{Y^s}}^2.
}
Therefore, by \eqref{esti:transfer-1}, \eqref{esti:transfer-2}, and \eqref{esti:transfer-3}, we have the desired estimates.
\end{proof}

Based on the space-time estimates in $H^\frac12$-level, and keeping in mind that the equation is $H^\frac12$-critical, we can further obtain the estimates in $H^l$-level with $l>\frac12$. We need larger class of space-time norms, and it is more convenient to invoke the $U^p$-$V^p$ method: recall that
\EQ{
	\norm{w}_{X^{l}(I)}=\brkb{\sum_{N\in2^\N} N^{2l}\norm{w_N}_{U_\De^2(I;L_x^2)}^2}^{\frac12}.
}
\begin{lem}[$H^1$-regular  promotion]\label{lem:transfer-2}
Let $0< \ep\ll1$ satisfy \eqref{defn:ep-gwp}, and $w\in C([0,T];H_x^1)$ be the solution of perturbation equation \eqref{eq:nls-w}. For any $\frac13<s\loe\frac12$, we have
\EQ{
\norm{w}_{X^1}
\lsm & \norm{w_0}_{H_x^{1}} + \norm{w}_{L_t^\I H_x^1}\brkb{\norm{v}_{Y^s\cap Z^s} + \normb{N^{\frac12}w_{N}}_{l_{N}^2L_t^2 L_x^6}} \brkb{\norm{v}_{Y^s} + \norm{w}_{L_t^6 L_x^{\frac92}}}\\
& + \norm{w}_{L_t^\I H_x^1}\brkb{\norm{v}_{Y^s} + \normb{N^{\frac12-3\ep}w_{N}}_{l_{N}^2L_t^2 L_x^6}} \brkb{\norm{v}_{Y^s} + \norm{\jb{\nabla}^{3\ep}w}_{L_t^{\frac{6}{1-6\ep}} L_x^{\frac{9}{2+6\ep}}}}\\
& + \norm{v}_{\wt Y^s} \brkb{\norm{v}_{Y^s} + \norm{N^sw_{ N}}_{l_{N}^2L_{t}^2 L_x^6}} \brkb{\norm{v}_{Z^s} + \norm{w}_{L_t^\I H_x^{\frac12}}} \\
& + \norm{v}_{\wt Y^s} \brkb{\norm{v}_{ Z^s} +\norm{w}_{X^{s}}}\brkb{\norm{v}_{Z^s} +\norm{w}_{L_t^\I H_x^s}},
}
where all the space-time norms are taken over $[0,T]\times \R^3$.
\end{lem}
\begin{proof}
Similar to the proof of Lemma \ref{lem:transfer}, we have
\EQ{
\norm{w}_{X^1} \lsm & \norm{w_0}_{H_x^{1}} + \brkb{\sum_{N\in2^\N} N^{2}\sup_{\norm{g}_{V_\De^2}=1}\absb{\int_0^T\int P_N(|u|^2u)g\dx\dd t}^2}^{\frac12}\\
\lsm & \norm{w_0}_{H_x^1} + \brkb{\sum_{N\in2^\N} N^{2}\sup_{\norm{g}_{V_\De^2}=1}\absb{\int_0^T\int P_N(\sum_{N_1}u_{N_1}u_{\loe N_1}^2)g\dx\dd t}^2}^{\frac12}\\
\lsm & \norm{w_0}_{H_x^{l}} +I + II,
}
where
\EQ{
	I:=\brkb{\sum_{N\in2^\N} N^{2}\sup_{\norm{g}_{V_\De^2}=1}\absb{\int_0^T\int P_N(\sum_{N_1}w_{N_1}u_{\loe N_1}^2)g\dx\dd t}^2}^{\frac12},
}
and
\EQ{
	II:=\brkb{\sum_{N\in2^\N} N^{2}\sup_{\norm{g}_{V_\De^2}=1}\absb{\int_0^T\int P_N(\sum_{N_1}v_{N_1}u_{\loe N_1}^2)g\dx\dd t}^2}^{\frac12}.
}

We first consider the term $I$, where the first order derivative acts on $w$. By H\"older's inequality, and Lemmas  \ref{lem:strichartz} and \ref{lem:littlewood-paley}, 
\EQnnsub{
	I\lsm &  \brkb{\sum_{N\in2^\N} N^{2}\normb{P_N(\sum_{N_1:N\lsm N_1}w_{N_1}u_{\loe N_1}^2)}_{L_t^{\frac32} L_x^{\frac{18}{13}}+ L_t^{\frac{3}{2-3\ep}} L_x^{\frac{18}{13+12\ep}}}^2}^{\frac12} \nonumber\\
	\lsm & \normb{\jb{\nabla}(\sum_{N_1}w_{N_1}u_{\loe N_1}^2)}_{L_t^{\frac32} L_x^{\frac{18}{13}} + L_t^{\frac{3}{2-3\ep}} L_x^{\frac{18}{13+12\ep}}} \nonumber\\
	\lsm & \normb{\jb{\nabla}(\sum_{N_1}w_{N_1}u_{\sim N_1} u_{\loe N_1})}_{L_t^{\frac32} L_x^{\frac{18}{13}}} \label{eq:transfer-hl-I-3}\\
	& + \normb{\jb{\nabla}(\sum_{N_1}w_{N_1}u_{\ll N_1}^2)}_{L_t^{\frac{3}{2-3\ep}} L_x^{\frac{18}{13+12\ep}}}.\label{eq:transfer-hl-I-4}
}
Now, the main task is to update the summation of $w_{N_1}$ to $l_{N_1}^2$. To this end, for \eqref{eq:transfer-hl-I-3}, we can simply use H\"older's inequality in $N_1$ for $w_{N_1}$ and $u_{\sim N_1}$. Precisely, by \eqref{esti:local-h1-I-2}, H\"older's inequality, and Lemma \ref{lem:littlewood-paley},
\EQn{\label{esti:transfer-hl-I-3}
\eqref{eq:transfer-hl-I-3} \lsm & \normb{\sum_{N_1}\normb{N_1w_{N_1}u_{\sim N_1} u_{\loe N_1}}_{L_{x}^{\frac{18}{13}}}}_{L_{t}^{\frac32}}\\
\lsm & \normb{\sum_{N_1}\normb{N_1w_{N_1}}_{L_x^2}\norm{u_{\sim N_1}}_{L_x^{\I}} \norm{u_{\loe N_1}}_{L_x^{\frac92}}}_{L_{t}^{\frac32}} \\
\lsm & \normb{\normb{N_1w_{N_1}}_{l_{N_1}^2L_x^2}\norm{u_{\sim N_1}}_{l_{N_1}^2L_x^{\I}} \norm{u}_{L_x^{\frac92}}}_{L_{t}^{\frac32}} \\
\lsm & \normb{\jb{\nabla}w}_{L_t^\I L_x^2} \norm{u_{N_1}}_{l_{N_1}^2L_t^{2} L_x^{\I}} \norm{u}_{L_t^{6} L_x^{\frac92}}\\
\lsm & \norm{w}_{L_t^\I H_x^1} \brkb{\norm{v}_{Y^s} + \normb{N^{\frac12}w_{N}}_{l_{N}^2L_t^2 L_x^6} }\brkb{\norm{v}_{Y^s} + \norm{w}_{L_t^6 L_x^{\frac92}}}.
}
For the second term \eqref{eq:transfer-hl-I-4}, we need to invoke the vector-valued Hardy-Littlewood maximal function to cover the critical summation problem.  Using Lemma \ref{lem:HL-boundedness} and H\"older's inequality,
\EQn{\label{esti:transfer-hl-I-4}
\eqref{eq:transfer-hl-I-4}
\lsm & \normb{NP_N\brkb{\sum_{N_1:N_1\sim N}w_{N_1}u_{\ll N_1}^2}}_{L_t^{\frac{3}{2-3\ep}} L_x^{\frac{18}{13+12\ep}}l_N^2} \\
\lsm & \normb{N\M\brkb{w_{N}u_{\ll N}^2}}_{L_t^{\frac{3}{2-3\ep}} L_x^{\frac{18}{13+12\ep}}l_N^2} \\
\lsm & \normb{Nw_{N}u_{\ll N}^2}_{L_t^{\frac{3}{2-3\ep}} L_x^{\frac{18}{13+12\ep}}l_N^2} \\
\lsm & \normb{\norm{Nw_{N}}_{l_N^2}\sup_{N}\absb{u_{\ll N}}^2}_{L_t^{\frac{3}{2-3\ep}} L_x^{\frac{18}{13+12\ep}}} \\
\lsm & \norm{Nw_{N}}_{L_t^\I L_x^2l_N^2} \normb{\sup_{N}\absb{u_{\ll N}}^2}_{L_t^{\frac{3}{2-3\ep}} L_x^{\frac{9}{2+6\ep}}}.
}
By Lemmas \ref{lem:schurtest}, \ref{lem:littlewood-paley}, and H\"older's inequality,
\EQn{\label{esti:transfer-hl-I-5}
\normb{\sup_{N}\absb{u_{\ll N}}^2}_{L_t^{\frac{3}{2-3\ep}} L_x^{\frac{9}{2+6\ep}}} \lsm & \normb{\sup_{N}\sum_{N_1,N_2:N_1\loe N_2\ll N}\absb{u_{N_1}u_{N_2}}}_{L_t^{\frac{3}{2-3\ep}} L_x^{\frac{9}{2+6\ep}}} \\
\lsm & \normb{\sum_{N_1\loe N_2}\absb{u_{N_1}u_{N_2}}}_{L_t^{\frac{3}{2-3\ep}} L_x^{\frac{9}{2+6\ep}}} \\
\lsm & \normb{\sum_{N_1\loe N_2} \brkb{\frac{N_1}{N_2}}^{3\ep} N_1^{-3\ep}u_{N_1}N_2^{3\ep}u_{N_2}}_{L_t^{\frac{3}{2-3\ep}} L_x^{\frac{9}{2+6\ep}}}\\
\lsm & \normb{ \norm{N_1^{-3\ep}u_{N_1}}_{l_{N_1}^2}\norm{N_2^{3\ep}u_{N_2}}_{l_{N_2}^2}}_{L_t^{\frac{3}{2-3\ep}} L_x^{\frac{9}{2+6\ep}}}\\
\lsm & \norm{N_1^{-3\ep}u_{N_1}}_{L_t^2 L_x^\I l_{N_1}^2} \norm{N_2^{3\ep}u_{N_2}}_{L_t^{\frac{6}{1-6\ep}} L_x^{\frac{9}{2+6\ep}}l_{N_2}^2} \\
\lsm & \brkb{\norm{v_{N_1}}_{l_{N_1}^2L_t^2 L_x^\I} + \norm{N_1^{-3\ep}w_{N_1}}_{l_{N_1}^2L_t^2 L_x^\I}} \\
& \cdot\brkb{\norm{v}_{Y^s} + \norm{\jb{\nabla}^{3\ep}w}_{L_t^{\frac{6}{1-6\ep}} L_x^{\frac{9}{2+6\ep}}}} \\
\lsm & \brkb{\norm{v}_{Y^s} + \normb{N_1^{\frac12-3\ep}w_{N_1}}_{l_{N_1}^2L_t^2 L_x^6}}\\
&\cdot \brkb{\norm{v}_{Y^s} + \norm{\jb{\nabla}^{3\ep}w}_{L_t^{\frac{6}{1-6\ep}} L_x^{\frac{9}{2+6\ep}}}}.
}
Combining \eqref{esti:transfer-hl-I-4} and \eqref{esti:transfer-hl-I-5}, we have
\EQn{\label{esti:transfer-hl-I-6}
\eqref{eq:transfer-hl-I-4}\lsm & \norm{Nw_{N}}_{L_t^\I L_x^2l_N^2} \normb{\sup_{N}\absb{u_{\ll N}}^2}_{L_t^{\frac{3}{2-3\ep}} L_x^{\frac{9}{2+6\ep}}} \\
\lsm & \norm{w}_{L_t^\I H_x^1} \brkb{\norm{v}_{Y^s} + \normb{N_1^{\frac12-3\ep}w_{N_1}}_{l_{N_1}^2L_t^2 L_x^6}} \brkb{\norm{v}_{Y^s} + \norm{\jb{\nabla}^{3\ep}w}_{L_t^{\frac{6}{1-6\ep}} L_x^{\frac{9}{2+6\ep}}}}.
}
Therefore, by \eqref{esti:transfer-hl-I-3} and \eqref{esti:transfer-hl-I-6},
\EQn{\label{esti:transfer-hl-I}
I\lsm& \eqref{eq:transfer-hl-I-3} + \eqref{eq:transfer-hl-I-4}\\ \lsm& \norm{w}_{L_t^\I H_x^1}\brkb{\norm{v}_{Y^s} + \normb{N^{\frac12}w_{N}}_{l_{N}^2L_t^2 L_x^6}} \brkb{\norm{v}_{Y^s} + \norm{w}_{L_t^6 L_x^{\frac92}}}\\
& + \norm{w}_{L_t^\I H_x^1}\brkb{\norm{v}_{Y^s} + \normb{N^{\frac12-3\ep}w_{N}}_{l_{N}^2L_t^2 L_x^6}} \brkb{\norm{v}_{Y^s} + \norm{\jb{\nabla}^{3\ep}w}_{L_t^{\frac{6}{1-6\ep}} L_x^{\frac{9}{2+6\ep}}}}.
}

Next, we estimate the term $II$,
where we need to use the bilinear Strichartz estimate and the duality structure. By frequency support property, we obtain
\EQnnsub{
II\lsm & \normb{ N_1v_{N_1}u_{\sim N_1}u_{\loe N_1}}_{L_t^1 L_x^2 l_{N_1}^2} \label{eq:transfer-hl-II-1}\\
& + \brkb{\sum_{N\in2^\N} N^{2}\sup_{\norm{g}_{V_\De^2}=1}\absb{\int_0^T\int P_N(\sum_{N_1}v_{N_1}u_{\ll N_1}^2)g\dx\dd t}^2}^{\frac12}. \label{eq:transfer-hl-II-2}
}
Now, we estimate \eqref{eq:transfer-hl-II-1}. Then, by $l_{N_1}^1\hra l_{N_1}^2$ and H\"older's inequality, 
\EQn{\label{esti:transfer-hl-II-1}
\eqref{eq:transfer-hl-II-1}\lsm & \sum_{N_1} \normb{\jb{\nabla}^{1-s}v_{N_1}}_{L_t^2 L_x^\I} \norm{\jb{\nabla}^su_{\sim N_1}}_{L_{t}^2 L_x^6} \norm{u_{\loe N_1}}_{L_t^\I L_x^3}\\
\lsm & \normb{\jb{\nabla}^{1-s}v_{N_1}}_{l_{N_1}^2L_t^2 L_x^\I} \norm{\jb{\nabla}^su_{\sim N_1}}_{l_{N_1}^2L_{t}^2 L_x^6} \brkb{\norm{v}_{L_t^\I L_x^3}+\norm{w}_{L_t^\I H_x^{\frac12}}}\\
\lsm & \normb{\jb{\nabla}^{s+\frac12-\ep}v_{N_1}}_{l_{N_1}^2L_t^2 L_x^\I} \brkb{\norm{\jb{\nabla}^sv_{\sim N_1}}_{l_{N_1}^2L_{t}^2 L_x^6} + \norm{\jb{\nabla}^sw_{\sim N_1}}_{l_{N_1}^2L_{t}^2 L_x^6}} \\
&\cdot\brkb{\norm{v}_{L_t^\I L_x^3}+\norm{w}_{L_t^\I H_x^{\frac12}}} \\
\lsm & \norm{v}_{\wt Y^s} \brkb{\norm{v}_{Y^s} + \norm{N_1^sw_{ N_1}}_{l_{N_1}^2L_{t}^2 L_x^6}} \brkb{\norm{v}_{Z^s} + \norm{w}_{L_t^\I H_x^{\frac12}}}.
}
Next, we consider \eqref{eq:transfer-hl-II-2}. To this end, we establish a bilinear Strichartz estimate before the proof for \eqref{eq:transfer-hl-II-2}. By Lemma \ref{lem:bilinearstrichartz}, for $N_1\ll N$,
\EQn{\label{esti:transfer-hl-II-2-bi-1}
\norm{u_{N_1}g_N}_{L_{t,x}^2} \lsm \frac{N_1}{N^{\frac12}} \brkb{\norm{P_{N_1}v_0}_{L_x^2} + \norm{w_{N_1}}_{U_\De^2}} \norm{g}_{U_\De^2},
}
and by Bernstein's, H\"older's inequalities, Lemma \ref{lem:strichartz}, and embedding $U_\De^4\hra L_t^\I L_x^2$,
\EQn{\label{esti:transfer-hl-II-2-bi-2}
\norm{u_{N_1}g_N}_{L_{t,x}^2} \lsm & \norm{u_{N_1}}_{L_t^2 L_x^\I} \norm{g_N}_{L_t^\I L_x^2} \\
\lsm & N_1^{\frac12}\norm{u_{N_1}}_{L_t^2 L_x^6} \norm{g}_{U_\De^4}\\
\lsm & N_1^{\frac12} \brkb{\norm{P_{N_1}v_0}_{L_x^2} + \norm{w_{N_1}}_{U_\De^2}} \norm{g}_{U_\De^4}.
}
By \eqref{esti:transfer-hl-II-2-bi-1}, \eqref{esti:transfer-hl-II-2-bi-2}, and Lemma \ref{lem:upvp-interpolation}, we have the bilinear Strichartz estimate
\EQn{\label{esti:transfer-hl-II-2-bi}
\norm{u_{N_1}g_N}_{L_{t,x}^2} \lsm & \frac{N^\ep}{N_1^\ep}\frac{N_1}{N^{\frac12}} \brkb{\norm{P_{N_1}v_0}_{L_x^2} + \norm{w_{N_1}}_{U_\De^2}} \norm{g}_{V_\De^2} \\
\lsm & \frac{N_1^{1-s+\ep}}{N^{\frac12-s+\ep}} \brkb{\norm{P_{N_1}v_0}_{L_x^2} + \norm{w_{N_1}}_{U_\De^2}} \norm{g}_{V_\De^2}.
}
Noting that  $N_1\loe N_2$, by the choice of $\ep$ in \eqref{defn:ep-gwp}, we have $N_1^{1-s+\ep}<N_1^{2s-2\ep}<N_1^{s-\ep}N_2^{s-\ep}$. Then, by \eqref{esti:transfer-hl-II-2-bi},
\EQn{\label{esti:transfer-hl-II-2}
\eqref{eq:transfer-hl-II-2} \lsm & \normb{\sum_{N_1,N_2:N_1\loe N_2\ll N} \sup_{\norm{g}_{V_\De^2}=1}\absb{\int_0^T\int Nv_{N}u_{N_1}u_{N_2}g_N\dx\dd t}}_{l_N^2} \\
\lsm & \normb{\sum_{N_1,N_2:N_1\loe N_2\ll N} \sup_{\norm{g}_{V_\De^2}=1} \norm{\jb{\nabla}v_N}_{L_t^2 L_x^\I} \norm{u_{N_2}}_{L_t^\I L_x^2} \norm{u_{N_1}g_N}_{L_{t,x}^2}}_{l_N^2} \\
\lsm & \sum_{N_1\loe N_2}  \normb{\jb{\nabla}^{s+\frac12-\ep}v_N}_{l_N^2L_t^2 L_x^\I} \norm{u_{N_2}}_{L_t^\I L_x^2} N_1^{1-s+\ep} \brkb{\norm{P_{N_1}v_0}_{L_x^2} + \norm{w_{N_1}}_{U_\De^2}} \\
\lsm & \norm{v}_{\wt Y^s} \sum_{N_1\loe N_2} N_1^{-\ep}N_2^{-\ep} N_2^s\norm{u_{N_2}}_{L_t^\I L_x^2} N_1^{s} \brkb{\norm{P_{N_1}v_0}_{L_x^2} + \norm{w_{N_1}}_{U_\De^2}} \\
\lsm & \norm{v}_{\wt Y^s} \brkb{\norm{v}_{Z^s} +\norm{w}_{X^{s}}}\brkb{\norm{v}_{Z^s} +\norm{w}_{L_t^\I H_x^s}}.
}
By \eqref{esti:transfer-hl-II-1} and \eqref{esti:transfer-hl-II-2},
\EQn{\label{esti:transfer-hl-II-case2}
II \lsm & \norm{v}_{\wt Y^s} \brkb{\norm{v}_{Y^s} + \norm{N_1^sw_{ N_1}}_{l_{N_1}^2L_{t}^2 L_x^6}} \brkb{\norm{v}_{Z^s} + \norm{w}_{L_t^\I H_x^{\frac12}}}\\ 
& + \norm{v}_{\wt Y^s} \brkb{\norm{v}_{ Z^s} +\norm{w}_{X^{s}}}\brkb{\norm{v}_{Z^s} +\norm{w}_{L_t^\I H_x^s}}.
}
Then \eqref{esti:transfer-hl-I} and \eqref{esti:transfer-hl-II-case2} give the desired estimates.
\end{proof}

\subsection{Energy bound}\label{sec:energy-bound}
\begin{prop}\label{prop:energy-bound}
Let the assumptions in Proposition \ref{prop:global-derterministic} hold. Take some $T>0$ such that $w\in C([0,T];H_x^1)$. Then, there exists $N_0=N_0(A)\gg1$ with the following properties. Assume that $\wh v_0$ is supported on $\fbrk{\xi\in\R^3:|\xi|\goe \frac12 N_0}$,
\EQ{
	\norm{u_0}_{H_x^s}+\norm{v}_{\wt Y^s\cap Z^s(\R)}\loe A\text{, and }E(w_0) \loe A N_0^{2(1-s)}.
}
Then, we have
\EQn{\label{eq:energy-bound}
\sup_{t\in[0,T]}E(w(t))\loe 2A N_0^{2(1-s)}.
}
\end{prop}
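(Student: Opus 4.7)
The plan is to run a continuity/bootstrap argument on $T_*$. Let $T_*\in[0,T]$ be the largest time for which $\sup_{t\in[0,T_*]}E(w(t))\loe 2AN_0^{2(1-s)}$; I will show that for $N_0=N_0(A)$ sufficiently large the strictly better bound $\sup_{t\in[0,T_*]}E(w(t))\loe \frac32 AN_0^{2(1-s)}$ actually holds, which by continuity forces $T_*=T$ and yields \eqref{eq:energy-bound}.

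Under the bootstrap hypothesis I first collect a hierarchy of global space-time estimates for $w$ on $[0,T_*]$, keeping track of the explicit $N_0$ dependence. The interaction Morawetz inequality of Lemma~\ref{lem:inter-mora}, combined with $\norm{w}_{L_t^\I H_x^1}\lsm N_0^{1-s}$ and the $\wt Y^s\cap Z^s$-control of $v$, yields a bound of the form $\norm{w}_{L_{t,x}^4([0,T_*]\times\R^3)}\loe C(A)\,N_0^{\al_0(s)}$. Plugging this into Lemma~\ref{lem:transfer} promotes the control to the $\dot H^{1/2}$-critical atomic space $X^{1/2}([0,T_*])$, and plugging that in turn into Lemma~\ref{lem:transfer-2} upgrades it to $X^l([0,T_*])$ for every $l\in(1/2,1]$, again with explicit polynomial dependence on $N_0$. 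These are precisely the $\dot H^{1/2}$- and $\dot H^1$-critical global space-time estimates advertised in the introduction.

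I would next derive the energy identity. Writing $e:=|u|^2u-|w|^2w = 2|v|^2w+v^2\wb w+2|w|^2v+w^2\wb v+|v|^2v$, a direct computation gives
\EQ{
E(w(T_*))-E(w(0)) = -\re\int_0^{T_*}\!\!\int_{\R^3} e\,\overline{\pd_t w}\,\dx\dt.
}
Substituting $i\pd_t w=-\De w+|u|^2u$ and performing the natural integrations by parts in $t$ and $x$, one reduces matters to multilinear space-time integrals in which each remaining derivative is attached either to $w$ or to the high-frequency linear flow $v=v_{\goe N_0}$. Using the frequency support $\wh v_0\subset\fbrk{|\xi|\goe\frac12 N_0}$ and Littlewood--Paley trichotomy, the terms involving only $w$ and those in which $v$ appears as a low-frequency factor are absorbed by the $X^l$- and $Z^s$-estimates from Section~\ref{sec:space-time}, while the genuinely new contribution (up to symmetric/conjugate variants) is
\EQ{
I_{\rm main}:=\sum_{N_1,N_2\ll N}\int_0^{T_*}\!\!\int_{\R^3}\nabla w_N\cdot\nabla v_N\,w_{N_1}w_{N_2}\,\dx\dt,
}
in which the first-order derivative is forced onto a high-frequency linear factor.

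The main obstacle is to estimate $I_{\rm main}$: the naive bound
\EQ{
|I_{\rm main}|\lsm \norm{\nabla w_N}_{L_t^\I L_x^2}\,\norm{\nabla v_N}_{L_t^2 L_x^\I}\,\norm{w_{\ll N}}_{L_{t,x}^4}^2
}
is unavailable, since Proposition~\ref{prop:nabla2infty} only delivers $s+\frac12-$ derivatives of $v$ in $L_t^2 L_x^\I$ and here $s<\frac12$. The plan is to instead apply the bilinear Strichartz estimate of Lemma~\ref{lem:bilinearstrichartz} to one of the pairings $(\nabla v_N)(w_{N_j})$, trading a missing fraction of a derivative on $v_N$ for a favourable factor $(N_j/N)^\theta$; the remaining factors are absorbed by the $\dot H^1$-scaling bound $\norm{w}_{X^l}$ (with $l$ close to $1$) on one low-frequency copy of $w$ and by the interaction Morawetz bound $\norm{w}_{L_{t,x}^4}$ on the other. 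Summing dyadically, the total increment is bounded by $C(A)\,N_0^{-\ep}\cdot N_0^{2(1-s)}$ for some $\ep=\ep(s)>0$, provided $s>\frac37$; this threshold arises from balancing the polynomial $N_0$-losses in the Morawetz/$X^l$ hierarchy against the $N_0$-gain of the bilinear trick. Choosing $N_0(A)$ large enough absorbs the constant $C(A)$ and closes the bootstrap.
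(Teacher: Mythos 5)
Your framing is right in most respects — the bootstrap on the energy, the $L_{t,x}^4 \to X^{1/2}\to X^l$ hierarchy from Lemmas~\ref{lem:inter-mora}, \ref{lem:transfer}, \ref{lem:transfer-2} with explicit $N_0$-powers, the energy identity, and the identification of $\sum_N\int\nabla w_N\cdot\nabla v_N\,w_{\ll N}^2$ as the critical term all coincide with the paper's proof. The gap is in the bilinear pairing you choose for that critical term; the estimate does not close the way you set it up.

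Applying Lemma~\ref{lem:bilinearstrichartz} to $(\nabla v_N)(w_{N_1})$ with $N_1\ll N$ gives
\EQ{
\norm{\nabla v_N\,w_{N_1}}_{L_{t,x}^2}
\lsm \frac{N_1}{N^{1/2}}\,\norm{\nabla P_N v_0}_{L_x^2}\,\norm{w_{N_1}}_{U_\De^2}
\lsm N_1\,N^{\frac12-s}\,\norm{v}_{Z^s}\,\norm{w_{N_1}}_{U_\De^2},
}
and the $N$-power $N^{\frac12-s}$ is exactly the same as in the direct bound $\norm{\nabla v_N}_{L_t^2L_x^\I}\lsm N^{\frac12-s+}\norm{v}_{\wt Y^s}$ from Proposition~\ref{prop:nabla2infty}. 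The bilinear factor $N_1/N^{1/2}$ merely compensates for replacing the radialish local-smoothing gain on $\nabla v_N$ by a crude $L_x^2\times L_t^2L_x^\I$ H\"older bound; it does not gain anything beyond Proposition~\ref{prop:nabla2infty}. Since $\nabla w_N$ contributes no decaying power of $N$ (its natural home $L_t^\I L_x^2$ gives only $N_0^{1-s}$), any interpolation between this bilinear bound and the $L_t^2L_x^\I$ bound still leaves a total $N$-power of $N^{\frac12-s}$, and the dyadic sum over $N\gsm N_0$ diverges for all $s<\frac12$. In addition, $\norm{w_{N_1}}_{U_\De^2}\lsm N_1^{-1}\norm{w}_{X^1}\lsm N_1^{-1}N_0^{3(1-s)}$ already exceeds the energy budget $N_0^{2(1-s)}$ before any other losses are counted.

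What makes the paper's argument close is that the bilinear estimate is applied instead to the deterministic pairing $(\nabla w_N)(w_{N_1})$ (see \eqref{esti:energy-bound-1-2w-low-bilinear}):
\EQ{
\norm{\nabla w_N\,w_{N_1}}_{L_{t,x}^2}\lsm \frac{N_1}{N^{1/2}}\,N\,\norm{w_N}_{U_\De^2}\norm{w_{N_1}}_{U_\De^2}\lsm N_1^{1-\ep}\,N^{-1/2}\,\norm{w}_{X^1}\norm{w}_{X^\ep},
}
which delivers an honest extra $N^{-1/2}$. Raising this to a small power $\theta$, interpolating the residual fractions of $\nabla w_N$ and $w_{N_1}$ against $L_t^\I L_x^2$ and $L_{t,x}^4$, and estimating $\nabla v_N$ \emph{by itself} in $L_t^2L_x^\I$, yields a total $N$-power $N^{\frac12-s-\theta/2+O(\ep)}$. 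Summability over $N$ requires $\theta>1-2s$, while the $N_0$-budget caps $\theta$ from above at roughly $\frac17$; these two constraints are compatible precisely when $s>\frac37$, which is where the paper's threshold comes from (the paper takes $\theta=\frac17-10\ep$). So the lesson is that $\nabla v_N$ must be estimated on its own via Proposition~\ref{prop:nabla2infty}, and the bilinear gain must be extracted from the $w$--$w$ pairing, not from $(\nabla v_N)(w_{N_j})$. You should also verify that the remaining pieces of the energy increment, \eqref{eq:energy-bound-2} (two copies of $\nabla w$ and one $v$, which has a logarithmic summation issue handled by Schur's test and one more bilinear estimate) and \eqref{eq:energy-bound-3} (no derivatives), close under the same budget; the paper treats both explicitly.
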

\begin{proof}
Let $I=[0,T]$ and $N_0=N_0(A)$ that will be defined later. From now on, all the space-time norms are taken over $I\times\R^3$. We implement a bootstrap procedure on $I$: assume an a priori bound 
\EQn{\label{eq:bound-w-hypothesis}
\sup_{t\in I} E(w(t))\loe 2AN_0^{2(1-s)},
} 
then it suffices to prove that
\EQn{
\sup_{t\in I} E(w(t))\loe \frac32AN_0^{2(1-s)}.
}

To start with, we collect useful estimates on $I$. Now, we use the notation $C=C(A)$ for short, and  the implicit constants in ``$\lesssim$'' depend on $A$. Moreover, we take all the space-time norms over $I\times \R^3$. By interpolation 
\EQ{
\norm{v}_{L_x^3} \lsm \norm{v}_{L_x^2}^{\frac23} \norm{v}_{L_x^\I}^{\frac13} \text{, and }\norm{v}_{L_x^4} \lsm \norm{v}_{L_x^2}^{\frac12} \norm{v}_{L_x^\I}^{\frac12}
}
we have
\EQn{\label{eq:bound-v-ltinfty}
\norm{v}_{L_t^\I L_x^3\cap L_t^\I L_x^4} + \norm{\jb{\nabla}^sv}_{L_t^\I L_x^2} + \norm{v}_{L_{t,x}^4}\lsm\norm{v}_{Y^s\cap Z^s}\lsm1. 
}
By the frequency support of $v$, we have for any $0\loe l\loe s+\frac12-\ep$,
\EQn{\label{eq:bound-v-l2linfty}
	\normb{\jb{\nabla}^lv_N}_{l_N^2L_t^2 L_x^\I}\lsm N_0^{l-s-\frac12+\ep}\norm{v}_{\wt Y^s}\lsm N_0^{l-s-\frac12+\ep}\lsm 1.
} 
By the conservation of mass, we have $\norm{u(t)}_{L_x^2} = \norm{u_0}_{L_x^2}$. Then, combining \eqref{eq:bound-v-ltinfty}, we have for all $t\in[0,T]$,
\EQn{\label{eq:bound-w-l2}
	\norm{w(t)}_{L_x^2}\lsm \norm{u(t)}_{L_x^2} + \norm{v(t)}_{L_x^2}\lsm \norm{u_0}_{L_x^2} +1\lsm 1.
}
By bootstrap hypothesis \eqref{eq:bound-w-hypothesis},
\EQn{\label{eq:bound-w-h1}
\norm{w}_{L_t^\I L_x^4} \lsm N_0^{\frac12(1-s)}\text{, and }\norm{w}_{L_t^\I \dot H_x^1}\lsm N_0^{1-s}.
}
Then, by interpolation, \eqref{eq:bound-w-l2}, and \eqref{eq:bound-w-h1}, we have for any $0\loe l \loe 1$, 
\EQn{\label{eq:bound-w-ltinfty}
\norm{w}_{L_t^\I \dot H_x^l}\lsm N_0^{l(1-s)}.
}
Next, we derive various space-time bounds combining Lemmas \ref{lem:inter-mora}, \ref{lem:transfer}, and \ref{lem:transfer-2}, under the above setting.
\begin{lem}
Suppose that the assumptions in Proposition \ref{prop:energy-bound} and the estimate \eqref{eq:bound-w-hypothesis} hold for some $N_0\gg1$.  
\begin{itemize}
\item[(1)] Then the following interaction Morawetz estimate holds:
\EQn{\label{eq:bound-w-l44}
\norm{w}_{L_{t,x}^4} \lsm & N_0^{\frac{1-s}4}.
}
\item[(2)] Let $0< \ep\ll1$ be defined in Proposition \ref{prop:global-derterministic}, then
	\EQn{\label{eq:bound-w-x0}
		\norm{w}_{X^0}\lsm  N_0^{(\frac56 + \frac16\ep)(1-s)}.
	}
\item[(3)] If $0< l\loe \frac12$ and $(q,r)$ is $L_x^2$-admissible, then
\EQn{\label{eq:bound-w-xl-1/2low}
\norm{N^lw_N}_{l_N^2L_t^qL_x^r}\lsm  N_0^{(l+1)(1-s)}.
}
\item[(4)] Moreover,
\EQn{\label{eq:bound-w-xl-1/2high}
	\norm{w}_{X^1}\lsm  N_0^{3(1-s)}.
}
\end{itemize}
\end{lem}
\begin{remark}
Roughly speaking, the interaction Morawetz estimate in Lemma \ref{lem:inter-mora} yields
\EQ{
	\norm{w}_{L_{t,x}^4}^4 \lsm N_0^{1-s} + N_0^{2(1-s)}\norm{v}_{L_t^2 L_x^\I}^2.
}
Since $v$ is high-frequency truncated, we are able to cover the additional increment for the remainder in the view of \eqref{eq:bound-v-l2linfty}. This is the main reason why we implement the high-low frequency decomposition for the initial data.
\end{remark}
\begin{proof}
Note that $s>0$, by the perturbed Morawetz estimate in Lemma \ref{lem:inter-mora} and \eqref{eq:bound-w-ltinfty},
\EQ{ 
\norm{w}_{L_{t,x}^4}^4 \lsm & \norm{w}_{L_t^\I L_x^2}^2 \norm{w}_{L_t^\I \dot H_x^{\frac12}}^2 + \norm{\nabla w}_{L_t^\I L_x^2}^2 \norm{w}_{L_t^\I L_x^2}^4 \norm{v}_{L_t^2 L_x^\I}^2 + \norm{v}_{L_{t,x}^4}^4\\
\lsm & N_0^{1-s} + N_0^{2(1-s)}N_0^{-1-2s+\ep} +1\lsm N_0^{1-s}.
}

Next, we prove \eqref{eq:bound-w-x0}. Noting that $(\frac{2}{1-\ep},\frac{6}{1+2\ep})$ is $L_x^2$-admissible, by embedding $V_\De^2\hra L_t^{\frac{2}{1-\ep}}L_x^{\frac{6}{1+2\ep}}$, Lemma \ref{lem:littlewood-paley}, H\"older's inequality, \eqref{eq:bound-w-l44}, \eqref{eq:bound-w-l2}, and \eqref{eq:bound-w-h1},
	\EQ{
		\norm{w}_{X^0} \lsm & \norm{w_0}_{L_x^2} + \norm{|u|^2u}_{L_t^{\frac{2}{1+\ep}}L_x^{\frac{6}{5-2\ep}}} \\
		\lsm & 1 + \norm{u}_{L_{t,x}^4}^{2(1+\ep)}\norm{u}_{L_t^\I L_x^{\frac{6(1-2\ep)}{2-5\ep}}}^{1-2\ep} \\
		\lsm & 1 + (\norm{w}_{L_{t,x}^4} + \norm{v}_{Y^s})^{2(1+\ep)}\Big(\norm{w}_{L_t^\I L_x^2}^{\frac{1-4\ep}{3(1-2\ep)}}\norm{w}_{L_t^\I L_x^4}^{\frac{2(1-\ep)}{3(1-2\ep)}} + \norm{v}_{Z^s}\Big)^{1-2\ep} \\
		\lsm & 1 + \big(N_0^{\frac14(1-s)}+1\big)^{2(1+\ep)}\Big(N_0^{\frac{2(1-\ep)}{3(1-2\ep)}\cdot\frac12(1-s)} +1\Big)^{1-2\ep} \\
		\lsm & N_0^{(\frac56 + \frac16\ep)(1-s)}.
	}

Then, we prove \eqref{eq:bound-w-xl-1/2low}. By Lemma \ref{lem:transfer} and \eqref{eq:bound-w-l44}, for any $0< l\loe \frac12$ and $L_x^2$-admissible $(q,r)$,
\EQ{
\norm{N^{l}w_N}_{l_N^2L_t^qL_x^r}\lsm & \norm{w_0}_{\dot H_x^{l}} + \brkb{\norm{w}_{L_t^\I H_x^{l+\frac12}}+ \normb{\jb{\nabla}^{l}v_N}_{l_N^2L_t^2 L_x^{\I}}}\brkb{\norm{v}_{Y^s}^2 + \norm{w}_{L_{t,x}^4}^2}\\
\lsm & N_0^{l(1-s)} + \brkb{N_0^{(l+\frac12)(1-s)}+1}\brkb{1 + N_0^{\frac{1}{2}(1-s)}}\lsm N_0^{(l+1)(1-s)}.
}

Now, we prove the $X^1$-estimate. To this end, we first derive the interpolation
\EQn{\label{esti:bound-w-xl-1/2high-1}
\norm{w}_{L_t^6 L_x^{\frac92}} \lsm \norm{w}_{L_{t,x}^4}^{\frac23} \norm{w}_{L_t^\I \dot H_x^1}^{\frac13}\lsm N_0^{\frac12(1-s)},
}
and 
\EQn{\label{esti:bound-w-xl-1/2high-2}
\norm{\jb{\nabla}^{3\ep}w}_{L_t^{\frac{6}{1-6\ep}} L_x^{\frac{9}{2+6\ep}}} \lsm \norm{w}_{L_{t,x}^4}^{\frac23-4\ep} \norm{w}_{L_t^\I  H_x^1}^{\frac13+4\ep}\lsm N_0^{(\frac12+3\ep)(1-s)}.
}
By \eqref{eq:bound-w-x0},
\EQn{\label{esti:bound-w-xl-1/2high-3}
\norm{w}_{X^s}\lsm \norm{w}_{X^0}^{1-s}\norm{w}_{X^1}^s \lsm N_0^{(1-s)^2}\norm{w}_{X^1}^s.
}
Then, by Lemma \ref{lem:transfer-2},  \eqref{eq:bound-w-xl-1/2low}, \eqref{esti:bound-w-xl-1/2high-1}, \eqref{esti:bound-w-xl-1/2high-2}, and \eqref{esti:bound-w-xl-1/2high-3},
\EQ{
	\norm{w}_{X^1}
	\lsm & \norm{w_0}_{H_x^{l}} + \norm{w}_{L_t^\I H_x^1}\brkb{\norm{v}_{Y^s\cap Z^s} + \normb{N^{\frac12}w_{N}}_{l_{N}^2L_t^2 L_x^6}} \brkb{\norm{v}_{Y^s} + \norm{w}_{L_t^6 L_x^{\frac92}}}\\
	& + \norm{w}_{L_t^\I H_x^1}\brkb{\norm{v}_{Y^s} + \normb{N^{\frac12-3\ep}w_{N}}_{l_{N}^2L_t^2 L_x^6}} \brkb{\norm{v}_{Y^s} + \norm{\jb{\nabla}^{3\ep}w}_{L_t^{\frac{6}{1-6\ep}} L_x^{\frac{9}{2+6\ep}}}}\\
	& + \norm{v}_{\wt Y^s} \brkb{\norm{v}_{Y^s} + \norm{N^sw_{ N}}_{l_{N}^2L_{t}^2 L_x^6}} \brkb{\norm{v}_{Z^s} + \norm{w}_{L_t^\I H_x^{\frac12}}}\\
	& + \norm{v}_{\wt Y^s} \brkb{\norm{v}_{ Z^s} +\norm{w}_{X^{s}}}\brkb{\norm{v}_{Z^s} +\norm{w}_{L_t^\I H_x^s}} \\
	\lsm & N_0^{1-s} + N_0^{1-s}(1+ N_0^{\frac32(1-s)})(1+N_0^{\frac12(1-s)})\\
	& + N_0^{1-s}(1+ N_0^{(\frac32-3\ep)(1-s)})(1+N_0^{(\frac12+3\ep)(1-s)})  \\
	& + (1+ N_0^{\frac32(1-s)})(1+N_0^{\frac12(1-s)}) + (1 + N_0^{(1-s)^2}\norm{w}_{X^1}^s)(1 + N_0^{s(1-s)}) \\
	\lsm &  N_0^{3(1-s)} + N_0^{1-s}\norm{w}_{X^1}^s.
}
Noting that $3(1-s)\goe 1$, by Young's inequality,
$$
\norm{w}_{X^1}\lesssim N_0^{3(1-s)} + N_0\lesssim N_0^{3(1-s)}.
$$
Hence,  \eqref{eq:bound-w-xl-1/2high} holds. This finishes the proof of the lemma.
\end{proof}

Now, we are prepared to give the proof of Proposition \ref{prop:energy-bound}. By \eqref{eq:nls-w} and integration-by-parts, we have
\EQ{
\frac{\rm d}{\rm d t}E(w(t))
= & \im\int\De \wb w\brkb{|u|^2u-|w|^2w}\dx +\im\int |u|^2u\brkb{|u|^2u-|w|^2w}\dx \\
= & -\im\int\nabla \wb w\cdot\nabla\brkb{|u|^2u-|w|^2w}\dx+\im\int |u|^2u\brkb{|u|^2u-|w|^2w}\dx.
}
Again, we do not distinguish between $u$ and $\wb u$. Then, we have
\EQnn{
\sup_{t\in I} E(t) \lsm& E(w_0) \nonumber\\
&+ \absb{\int_I \int \nabla w\cdot \nabla v\brko{v+w}^2\dx\dd t} \label{eq:energy-bound-1}\\
&+ \absb{\int_I \int \nabla  w\cdot \nabla w v\brko{v+w}\dx\dd t}\label{eq:energy-bound-2}\\
&+\absb{\int_I\int |u|^2u\brkb{|u|^2u-|w|^2w}\dx \dd t}. \label{eq:energy-bound-3}
}

\textbf{Estimate on \eqref{eq:energy-bound-1}.} This is the main case, where we need the restriction $s>\frac{17}{40}$. We first make a frequency decomposition:
\EQnnsub{
\eqref{eq:energy-bound-1}\lsm & \sum_{N\in2^\N}\absb{\int_I \int \nabla w\cdot \nabla v_N v_{\gsm N}(v+w)\dx\dd t}\label{eq:energy-bound-1-1v-high}\\
& + \sum_{N\in2^\N}\absb{\int_I \int \nabla w_N\cdot \nabla v_N v_{\ll N}(v_{\ll N}+w_{\ll N})\dx\dd t}\label{eq:energy-bound-1-1v-low}\\
& +\sum_{N\in2^\N} \absb{\int_I \int \nabla w_N\cdot \nabla v_Nw_{\gsm N}(w+v)\dx\dd t}\label{eq:energy-bound-1-2w-high}\\
& + \sum_{N\in2^\N} \absb{\int_I \int \nabla w_N\cdot \nabla v_Nw_{\ll N}^2\dx\dd t}.\label{eq:energy-bound-1-2w-low}
} 

For \eqref{eq:energy-bound-1-1v-high}, we can directly transfer the derivative from $v_N$ to $v_{\gsm N}$. By H\"older's inequality, Lemma \ref{lem:littlewood-paley}, and \eqref{eq:bound-v-l2linfty}, 
\EQn{\label{esti:energy-bound-1-1v-high}
\eqref{eq:energy-bound-1-1v-high} \lsm & \sum_{N\lsm N_1}\absb{\int_I \int \nabla w\cdot \nabla v_N v_{N_1}(v+w)\dx\dd t} \\ 
\lsm & \sum_{N\lsm N_1} \norm{w}_{L_t^\I \dot H_x^1} \norm{\nabla v_N}_{L_t^2 L_x^\I}\norm{v_{N_1}}_{L_t^2 L_x^\I}\brkb{\norm{v}_{L_t^\I L_x^2} + \norm{w}_{L_t^\I L_x^2}} \\
\lsm & N_0^{1-s} \sum_{N\lsm N_1}\frac{N^{\frac12}}{N_1^{\frac12}}  \normb{\jb{\nabla}^{\frac12} v_N}_{L_t^2 L_x^\I}\normb{\jb{\nabla}^{\frac12}v_{N_1}}_{L_t^2 L_x^\I} \\
\lsm & N_0^{1-s} \normb{\jb{\nabla}^{\frac12} v_N}_{l_N^2L_t^2 L_x^\I}\normb{\jb{\nabla}^{\frac12}v_{N_1}}_{l_{N_1}^2L_t^2 L_x^\I}\lsm N_0^{1-s}.
}

Next, we bound \eqref{eq:energy-bound-1-1v-low}, where we use the bilinear Strichartz estimate for $\nabla w_N v_{\ll N}$ to lower down the derivative of $\nabla v_N$. 
From Lemma \ref{lem:bilinearstrichartz}, \eqref{eq:bound-w-xl-1/2high}, and \eqref{eq:bound-v-ltinfty}, for $N_1\ll N$, we have that 
\EQn{\label{esti:energy-bound-1-1v-low-bilinear}
\norm{\nabla w_N v_{N_1}}_{L_{t,x}^2}\lsm & \frac{N_1}{N^{\frac12}} N\norm{w_N}_{U_\De^2}\norm{P_{N_1}v_0}_{L_x^2}\\
\lsm & N_1^{1-s}N^{-\frac12}\norm{w}_{X^1}\norm{P_{N_1}v_0}_{H_x^s}\\
\lsm & N_1^{1-s}N^{-\frac12} N_0^{3(1-s)}.
}
Note that $\frac{17}{40}<s$ gives
\EQn{\label{esti:energy-bound-1-1v-low-li}
\norm{\nabla v_N}_{L_t^2 L_x^\I}\lsm N^{\frac{3}{40}}\normb{\jb{\nabla}^{s+\frac12-\ep} v_N}_{L_t^2 L_x^\I}\text{, and } \norm{v_{N_1}}_{L_t^2 L_x^\I}\lsm N_1^{-\frac{37}{40}}\normb{\jb{\nabla}^{s+\frac12-\ep} v_{N_1}}_{L_t^2 L_x^\I}.
} 
Then, combining H\"older's inequaltiy, \eqref{esti:energy-bound-1-1v-low-bilinear}, \eqref{esti:energy-bound-1-1v-low-li},  \eqref{eq:bound-v-ltinfty}, \eqref{eq:bound-v-l2linfty}, and \eqref{eq:bound-w-ltinfty}, it holds that
\EQn{\label{esti:energy-bound-1-1v-low}
\eqref{eq:energy-bound-1-1v-low} \lsm & \sum_{N_1\ll N}\absb{\int_I \int \nabla w_N\cdot \nabla v_N v_{N_1}(v_{\ll N}+w_{\ll N})\dx\dd t} \\
\lsm & \sum_{N_1\ll N} \norm{\nabla w_N}_{L_t^\I L_x^2}^{\frac{3}{4}} \norm{\nabla v_{N}}_{L_t^2 L_x^\I} \norm{v_{N_1}}_{L_t^2 L_x^\I}^{\frac{3}{4}} \norm{\nabla w_N v_{N_1}}_{L_{t,x}^2}^{\frac{1}{4}}\\
&\cdot\brkb{\norm{v_{\ll N}}_{L_t^\I L_x^2} + \norm{w_{\ll N}}_{L_t^\I L_x^2}} \\
\lsm & N_0^{\frac{3}{4}(1-s)} \sum_{N_1\ll N}  \norm{\nabla v_{N}}_{L_t^2 L_x^\I} \norm{v_{N_1}}_{L_t^2 L_x^\I}^{\frac{3}{4}} \brkb{N_1^{1-s}N^{-\frac12}N_0^{3(1-s)}}^{\frac{1}{4}} \\
\lsm & N_0^{\frac{3}{2}(1-s)} \sum_{N_1\ll N} N^{\frac{3}{40}} N_1^{-\frac{111}{160}} N_1^{\frac{1}{4}(1-s)}N^{-\frac18}\\
\lsm & N_0^{\frac{3}{2}(1-s)} \sum_{N_1\ll N}  N_1^{-\frac{11}{20}} N^{-\frac{1}{20}} \lsm   N_0^{-\frac{1}{2}(1-s)} N_0^{2(1-s)}.
}

Next, we deal with the term \eqref{eq:energy-bound-1-2w-high}, where we can directly transfer the derivative from $v_N$ to $w_{\gsm N}$. 
Note that by $s>\frac{17}{40}$, 
\EQn{\label{esti:energy-bound-1-2w-high-1}
\sum_{N,N_1:N_0\lsm N\lsm N_1} \frac{N^{\frac12-s+\ep}}{N_1^{\frac{1}{10}}}\lsm N_0^{-\frac{1}{100}}.
}
By interpolation, \eqref{eq:bound-w-l44}, and \eqref{eq:bound-w-xl-1/2low}, we also have
\EQn{\label{esti:energy-bound-1-2w-high-2}
\normb{N_1^{\frac{1}{10}}w_{N_1}}_{L_{t,x}^4}\lsm \norm{w}_{L_{t,x}^4}^{\frac{3}{5}} \normb{\jb{\nabla}^{\frac{1}{4}}w}_{L_{t,x}^4}^{\frac{2}{5}}\lsm N_0^{\frac{3}{4}(1-s)}.
}
Therefore, by H\"older's inequality, \eqref{eq:bound-v-l2linfty}, \eqref{esti:energy-bound-1-2w-high-1},  \eqref{esti:energy-bound-1-2w-high-2}, and \eqref{eq:bound-w-l44}, 
\EQn{\label{esti:energy-bound-1-2w-high}
\eqref{eq:energy-bound-1-2w-high}\lsm & \sum_{N,N_1\in2^\N:N\lsm N_1} \absb{\int_I \int \nabla w_N\cdot \nabla v_Nw_{N_1}(w+v)\dx\dd t}\\
\lsm & \sum_{N,N_1\in2^\N:N\lsm N_1} \frac{N^{\frac12-s+\ep}}{N_1^{\frac{1}{10}}} \norm{\nabla w_N}_{L_t^\I L_x^2} \normb{\jb{\nabla}^{s+\frac12-\ep}v_N}_{L_t^2 L_x^\I}\\
&\cdot \normb{N_1^{\frac{1}{10}}w_{N_1}}_{L_{t,x}^4}\brkb{ \norm{w}_{L_{t,x}^4} + \norm{v}_{L_{t,x}^4}}\\
\lsm & N_0^{-\frac{1}{100}}N_0^{1-s} N_0^{\frac{3}{4}(1-s)} N_0^{\frac{1}{4}(1-s)} \lsm  N_0^{-\frac{1}{100}}N_0^{2(1-s)}.
}

Now, we deal with the term \eqref{eq:energy-bound-1-2w-low}, which is the main part of the whole argument. We postponed here to illustrate the key idea. Roughly speaking, by H\"older's inequality, \eqref{eq:bound-w-ltinfty}, and \eqref{eq:bound-w-l44}, 
\EQn{\label{eq:energy-bound-1-2w-low-observation}
\absb{\int_I \int \nabla w_N\cdot \nabla v_Nw_{\ll N}^2\dx\dd t}
\lsm& \norm{\nabla w}_{L_t^\I L_x^2}\norm{\nabla v}_{L_t^2 L_x^\I}\norm{w}_{L_{t,x}^4}^2\\
\lsm& N_0^{\frac32(1-s)}\norm{\nabla v}_{L_t^2 L_x^\I}.
}
Although we lack the $\norm{\nabla v}_{L_t^2 L_x^\I}$-estimate, the bilinear Strichartz estimate for $\nabla w_N w_{\ll N}$ can be introduced to lower down the derivative of $\nabla v_N$. In the view of \eqref{eq:bound-w-xl-1/2low} and \eqref{eq:bound-w-xl-1/2high}, this procedure will cause the increase of $N_0$. This is allowed, since there is still $N_0^{\frac12(1-s)}$-gap towards the energy increment $N_0^{2(1-s)}$ in \eqref{eq:energy-bound-1-2w-low-observation}. 

Now, we give the concrete argument for the estimate of \eqref{eq:energy-bound-1-2w-low}.  By Lemma \ref{lem:bilinearstrichartz}, \eqref{eq:bound-w-xl-1/2low},  \eqref{eq:bound-w-xl-1/2high}, and \eqref{eq:bound-w-x0}, noting that $N_1\ll N$,
\EQn{\label{esti:energy-bound-1-2w-low-bilinear}
\norm{\nabla w_N w_{N_1}}_{L_{t,x}^2}\lsm& \frac{N_1}{N^{\frac12}} N\norm{w_N}_{U_\De^2}\norm{w_{N_1}}_{U_\De^2}\\
\lsm& N_1  N^{-\frac12} \norm{w_N}_{X^1} \norm{w_{N_1}}_{X^0}\\
\lsm & N_1  N^{-\frac12} N_0^{3(1-s)} N_0^{(\frac56 + \frac16\ep)(1-s)} \\
\lsm & N_1  N^{-\frac12} N_0^{(\frac{23}6 + \frac16\ep)(1-s)}
}
Therefore, by H\"older's inequality,  \eqref{esti:energy-bound-1-2w-low-bilinear} and \eqref{eq:bound-v-l2linfty},
\EQn{
\eqref{eq:energy-bound-1-2w-low}\lsm & \sum_{N_1\loe N_2\ll N} \absb{\int_I \int \nabla w_N\cdot \nabla v_Nw_{N_1}w_{N_2}\dx\dd t}\\
\lsm & \sum_{N_1\loe N_2\ll N} \norm{\nabla w_N}_{L_t^\I L_x^2}^{\frac{17}{20}+10\ep} \norm{\nabla v_N}_{L_t^2 L_x^\I} \norm{\nabla w_N w_{N_1}}_{L_{t,x}^2}^{\frac{3}{20} -10\ep}\\
&\cdot \norm{w_{N_1}}_{L_{t,x}^4}^{\frac{7}{10}+20\ep}\norm{w_{N_2}}_{L_{t,x}^4} \norm{w_{N_1}}_{L_t^\I L_x^2}^{\frac{3}{20}-10\ep}\\
\lsm & \sum_{N_1\loe N_2\ll N} N_0^{(\frac{17}{20}+10\ep)(1-s)}\> N^{\frac12-s+\ep}\|v\|_{\widetilde Y^s}\>\big[N_1  N^{-\frac12}  N_0^{(\frac{23}{6}+\frac16\ep)(1-s)}\big]^{\frac{3}{20} -10\ep}\\
&\quad \cdot N_0^{\frac14(\frac{7}{10}+20\ep)(1-s)} N_0^{\frac14(1-s)} N_1^{-(\frac{3}{20} -10\ep)}N_0^{(\frac{3}{20} -10\ep)(1-s)}\\
\lsm & N_0^{(2-20\ep)(1-s)} \sum_{N_1\loe N_2\ll N}  N^{\frac{17}{40}-s+6\ep}.
}
By the choice of $\ep$ in \eqref{defn:ep-gwp},  $\frac{17}{40}-s+6\ep<0$, then we obtain that 
\EQn{\label{esti:energy-bound-1-2w-low}
\eqref{eq:energy-bound-1-2w-low}
\lsm &  N_0^{-20\ep(1-s)} N_0^{2(1-s)}.
}

Combining  \eqref{esti:energy-bound-1-1v-high}, \eqref{esti:energy-bound-1-1v-low}, \eqref{esti:energy-bound-1-2w-high}, and  \eqref{esti:energy-bound-1-2w-low}, we have
\EQn{\label{esti:energy-bound-1}
	\eqref{eq:energy-bound-1}\lsm & \eqref{eq:energy-bound-1-1v-high} +\eqref{eq:energy-bound-1-1v-low}+\eqref{eq:energy-bound-1-2w-high} + \eqref{eq:energy-bound-1-2w-low}\\
	\lsm& \brkb{N_0^{-(1-s)} +  N_0^{-\frac{1}{2}(1-s)} + N_0^{-\frac{1}{100}} + N_0^{-20\ep(1-s)}} N_0^{2(1-s)}\\
	\lsm &  N_0^{-20\ep(1-s)} N_0^{2(1-s)}.
} 

\textbf{Estimate on \eqref{eq:energy-bound-2}.}  The proof for \eqref{eq:energy-bound-2} is easier, since there is no derivative acting on $v$. However, the integration contains two $\nabla w$ terms, which already leads to the increment of $N_0^{2(1-s)}$. Therefore, we need to cover the additional $N_0$. By H\"older's inequality,
\EQn{\label{esti:energy-bound-2-1}
	\eqref{eq:energy-bound-2}
	\lsm & \norm{\nabla w}_{L_t^\I L_x^2}^2\norm{v}_{L_t^2 L_x^\I}\norm{u}_{L_t^2 L_x^\I} \\
	\lsm & \norm{\nabla w}_{L_t^\I L_x^2}^2\norm{v}_{L_t^2 L_x^\I}\brkb{\norm{w}_{L_t^2 L_x^\I} + \norm{v}_{L_t^2 L_x^\I}}.
}
Due to the failure of endpoint Strichartz estimate for $L_t^2 L_x^\I$, we use the interpolation, Bernstein's inequality, Lemma \ref{lem:strichartz}, \eqref{eq:bound-w-xl-1/2low}, and  \eqref{eq:bound-w-xl-1/2high}
\EQn{\label{esti:energy-bound-2-2}
\norm{w}_{L_t^2 L_x^\I} \lsm & \norm{N^\ep P_Nw}_{l_N^2L_t^2 L_x^\I} \\
\lsm & \norm{P_Nw}_{l_N^2L_t^2 L_x^\I}^{1-2\ep} \normb{N^{\frac12}P_Nw}_{l_N^2L_t^2 L_x^\I}^{2\ep} \\
\lsm & \normb{N^{\frac12}P_Nw}_{l_N^2L_t^2 L_x^6}^{1-2\ep} \normb{NP_Nw}_{l_N^2U_\De^2}^{2\ep} \\
\lsm & \normb{N^{\frac12}P_Nw}_{l_N^2L_t^2 L_x^6}^{1-2\ep} \norm{w}_{X^1}^{2\ep} \\
\lsm & N_0^{(\frac32+3\ep)(1-s)}.
}
Since $s>\frac{17}{40}$ implies $1-\frac52s+(4-3s)\ep<1-\frac52s+3\ep<-\frac{1}{100}$, by \eqref{esti:energy-bound-2-1}, \eqref{esti:energy-bound-2-2}, and \eqref{eq:bound-v-l2linfty},
\EQn{\label{esti:energy-bound-2}
\eqref{eq:energy-bound-2}
\lsm & \norm{\nabla w}_{L_t^\I L_x^2}^2\norm{v}_{L_t^2 L_x^\I}\brkb{\norm{w}_{L_t^2 L_x^\I} + \norm{v}_{L_t^2 L_x^\I}} \\ 
\lsm & N_0^{2(1-s)} N_0^{-s-\frac12+\ep}\brkb{N_0^{(\frac32+3\ep)(1-s)} + N_0^{-s-\frac12+\ep}} \\
\lsm & N_0^{1-\frac52s+(4-3s)\ep} N_0^{2(1-s)} \lsm N_0^{-\frac1{100}}N_0^{2(1-s)}.
}

\textbf{Estimate on \eqref{eq:energy-bound-3}.} This is a simple case, where no derivative appears. By H\"older's inequality, \eqref{eq:bound-w-l44}, and \eqref{eq:bound-w-ltinfty},
\EQ{\label{esti:energy-bound-3}
\eqref{eq:energy-bound-3}\lsm & \absb{\int_I\int |u|^2u\brkb{|u|^2u-|w|^2w}\dx \dd t } \\
\lsm & \norm{v}_{L_t^2 L_x^\I} \brkb{\norm{v}_{L_{t,x}^4}^2 + \norm{w}_{L_{t,x}^4}^2} \brkb{\norm{v}_{L_t^\I L_x^6}^3 + \norm{w}_{L_t^\I L_x^6}^3} \\
\lsm & N_0^{-s-\frac12+\ep} N_0^{\frac12(1-s)} N_0^{3(1-s)} \lsm N_0^{1-\frac52s+\ep} N_0^{2(1-s)}\lsm N_0^{-\frac{1}{100}} N_0^{2(1-s)}.
} 

Then, by choosing $N_0=N_0(A)$ suitably large, and combining \eqref{esti:energy-bound-1},  \eqref{esti:energy-bound-2}, and \eqref{esti:energy-bound-3}, we have
\EQn{
	\sup_{t\in I} E(t) \loe& E(w_0) + \eqref{eq:energy-bound-1} + \eqref{eq:energy-bound-2}+ \eqref{eq:energy-bound-3}\\
	\loe & AN_0^{2(1-s)} + C(A)\cdot\brkb{ N_0^{-20\ep(1-s)} + N_0^{-\frac{1}{100}}} N_0^{2(1-s)} \\
	\loe &  \frac{3}{2} AN_0^{2(1-s)}.
}
Then, by the standard bootstrap argument, we finish the proof of \eqref{eq:energy-bound}.
\end{proof}

\subsection{Proof of Proposition \ref{prop:global-derterministic}}\label{sec:global-scattering}
We first prove the global well-posedness. Since $v\in Y^s\cap Z^s(\R)$ and $w_0\in H_x^1$, by Proposition \ref{prop:local-H1}, there exists $T_1$ depending on $\norm{w_0}_{H_x^1}$ and $\norm{v}_{Y^s(\R)\cap Z^s(\R)}$, such that $w\in C([0,T_1];H_x^1)$ solves \eqref{eq:nls-w}. By Proposition \ref{prop:energy-bound}, we have
\EQ{
E(w(T_1))\loe \sup_{t\in [0,T_1]} E(w(t)) \loe 2AN_0^{2(1-s)}.
}
Then, we have $\norm{w(T_1)}_{H_x^1}^2\loe 2AN_0^{2(1-s)}$, and can apply Proposition \ref{prop:local-H1} again starting from $T_1$. Since the energy bound in \eqref{eq:energy-bound} does not rely on $T$, we can extend the solution on $\R$ by induction, and get
\EQn{\label{eq:energy-bound-global}
	\sup_{t\in\R}E(w(t))\loe 2A N_0^{2(1-s)}.
}

Next, we prove the scattering statement. Since the global well-posedness already holds, we do not care about the explicit expression of $A$ and $N_0$. We only consider the forward-in-time case, and it suffices to prove that
\EQn{\label{eq:scattering-space-time}
\lim_{T\ra+\I}\normb{\int_T^{+\I} e^{-it\De}(|u|^2u)\dd t}_{H_x^1} = 0.
}
First note that by \eqref{eq:energy-bound-global}, we have Lemmas \ref{lem:inter-mora}, \ref{lem:transfer}, and \ref{lem:transfer-2} hold on $[0,\I)$. Then, 
\EQ{
\norm{w}_{X^1([0,+\I))}\loe C(A, N_0).
}
In particular, 
\EQ{
\lim_{T\ra+\I} \norm{w}_{L_t^4L_x^6([T,+\I)\times \R^3)} = 0.
}
Moreover, we also have
\EQ{
\lim_{T\ra+\I} \norm{v}_{\wt Y^s([T,+\I))} = 0.
}
Now, all the space-time norms are taken over $[T,+\I)\times \R^3$.
We split
\EQnnsub{
\text{L.H.S. of }\eqref{eq:scattering-space-time} \lsm & \normb{ \int_T^{+\I} e^{-it\De}(\jb{\nabla} wu^2)\dd t}_{L_x^2} \label{eq:scattering-space-time-nablaw}\\
& + \normb{ \int_T^{+\I} e^{-it\De}(\jb{\nabla} v u^2)\dd t}_{L_x^2}.\label{eq:scattering-space-time-nablav}
}
The proof for the first term \eqref{eq:scattering-space-time-nablaw} is easy. By Lemma \ref{lem:strichartz} and H\"older's inequality, 
\EQ{
\eqref{eq:scattering-space-time-nablaw} \lsm & \norm{\jb{\nabla} wu^2}_{L_t^2 L_x^{\frac65}}\\
\lsm & \norm{\jb{\nabla} w}_{L_t^\I L_x^2}\brkb{\norm{w}_{L_t^4 L_x^6}^2 + \norm{v}_{L_t^4 L_x^6}^2}\\
\lsm & C(A,N_0)\brkb{\norm{w}_{L_t^4 L_x^6([T,+\I))}^2 + \norm{v}_{\wt Y^s([T,+\I))}^2}.
}
Then, we have
\EQn{\label{esti:scattering-space-time-nablaw}
\lim_{T\ra+\I} \eqref{eq:scattering-space-time-nablaw} = 0.
}

Next, we deal with the term \eqref{eq:scattering-space-time-nablav}. By frequency decomposition,
\EQnnsub{
\eqref{eq:scattering-space-time-nablav}\lsm & \sum_{N\in2^\N}\normb{ \int_T^{+\I} e^{-it\De}(\jb{\nabla} v_N u_{\gsm N}u)\dd t}_{L_x^2}\label{eq:scattering-space-time-nablav-high}\\
& + \sum_{N\in2^\N} \normb{ \int_T^{+\I} e^{-it\De}(\jb{\nabla} v_N u_{\ll N}^2)\dd t}_{L_x^2}.\label{eq:scattering-space-time-nablav-low}
}

By H\"older's inequality and Lemma \ref{lem:schurtest}, we have
\EQ{
\eqref{eq:scattering-space-time-nablav-high} \loe & C \sum_{N\lsm N_1}\norm{\jb{\nabla} v_N u_{N_1}u}_{L_t^1 L_x^2} \\
\loe & C \sum_{N\lsm N_1}\frac{N^{\frac12}}{N_1^{\frac12}}\normb{\jb{\nabla}^{\frac12} v_N}_{L_t^2 L_x^\I} \normb{\jb{\nabla}^{\frac12}v_{N_1}}_{L_t^2 L_x^\I} \norm{u}_{L_t^\I L_x^2} \\
& + C \sum_{N\lsm N_1}\frac{N^{\frac12}}{N_1^{\frac12}}\normb{\jb{\nabla}^{\frac12} v_N}_{L_t^2 L_x^\I} \normb{\jb{\nabla}^{\frac12}w_{N_1}}_{L_{t,x}^4}\norm{u}_{L_{t,x}^4} \\
\loe & C \normb{\jb{\nabla}^{\frac12} v_N}_{l_N^2 L_t^2 L_x^\I} \normb{\jb{\nabla}^{\frac12}v_{N_1}}_{l_{N_1}^2 L_t^2 L_x^\I}\norm{u}_{ L_t^\I L_x^2}\\
& + C \normb{\jb{\nabla}^{\frac12} v_N}_{l_N^2 L_t^2 L_x^\I} \normb{\jb{\nabla}^{\frac12}w_{N_1}}_{l_{N_1}^2 L_{t,x}^4}\norm{u}_{ L_{t,x}^4} \\
\loe & C \norm{v}_{\wt Y^s([T,+\I))}^2 + C \norm{v}_{\wt Y^s([T,+\I))} \normb{w}_{X^1} \brkb{\norm{v}_{Y^s} + \norm{w}_{X^1}} \\
\loe & C \norm{v}_{\wt Y^s([T,+\I))}^2 + C(A,N_0) \norm{v}_{\wt Y^s([T,+\I))}.
}
Then, 
\EQn{\label{esti:scattering-space-time-nablav-high}
\lim_{T\ra+\I} \eqref{eq:scattering-space-time-nablav-high} = 0.
}

Finally, we estimate  \eqref{eq:scattering-space-time-nablav-low}, where we need to exploit the duality structure as in the proof of Proposition \ref{prop:local-H1}. In this case, it is unnecessary to invoke the $U^p$-$V^p$ method as before for two reasons: first, we are considering the dual operator of $e^{is\De}$; second, we have estimate for $\normb{\jb{\nabla}^{\frac56+\ep}v}_{L_t^2 L_x^\I}$ under the radial assumption. We can simply use the duality representation of the $L_x^2$-norm:
\EQn{\label{esti:scattering-space-time-nablav-low-1}
\eqref{eq:scattering-space-time-nablav-low}\loe & C \sum_{N_1\loe N_2\ll N} \normb{ \int_T^{+\I} e^{-it\De}(\jb{\nabla} v_N u_{N_1} u_{N_2})\dd t}_{L_x^2}\\
\loe &C \sum_{N_1\loe N_2\ll N} \sup_{\norm{g}_{L_x^2}=1} \int_T^{+\I} \jb{g,e^{-it\De}(\jb{\nabla} v_N u_{N_1} u_{N_2})}\dd t\\
\loe & C\sum_{N_1\loe N_2\ll N} \sup_{\norm{g}_{L_x^2}=1} \int_T^{+\I} \int (e^{it\De}g_{\sim N})\jb{\nabla} v_N u_{N_1} u_{N_2} \dx\dd t.
}
By Lemma \ref{lem:bilinearstrichartz}, for $N_1\ll N$,
\EQn{\label{esti:scattering-space-time-nablav-low-2-bilinear}
\norm{(e^{it\De}g_{\sim N}) u_{N_1}}_{L_{t,x}^2}\lsm & \frac{N_1}{N^{\frac12}} \norm{g_{\sim N}}_{L_x^2} \brkb{\norm{P_{N_1}v_0}_{L_x^2} + \norm{w_{N_1}}_{U_\De^2}}\\
\lsm & \frac{N_1^{\frac12}}{N^{\frac13}} \brkb{N_1^{\frac13}\norm{P_{N_1}v_0}_{L_x^2} + N_1^{\frac13}\norm{w_{N_1}}_{U_\De^2}} 
\lsm  \frac{N_1^{\frac13}}{N^{\frac16}} C(A, N_0).
}
By \eqref{esti:scattering-space-time-nablav-low-2-bilinear} and H\"older's inequality, for $N_1\loe N_2\ll N$,
\EQn{\label{esti:scattering-space-time-nablav-low-2}
& \int_T^{+\I}\int (e^{it\De}g_{\sim N})\jb{\nabla} v_N u_{N_1} u_{N_2} \dx\dd t\\
\loe & C \norm{(e^{it\De}g_{\sim N}) u_{N_1}}_{L_{t,x}^2}\norm{\jb{\nabla} v_N}_{L_t^2 L_x^\I}\norm{u_{N_2}}_{L_t^\I L_x^2} \\
\loe & C(A, N_0) \frac{N_1^{\frac13}}{N^{\frac16}} \norm{\jb{\nabla} v_N}_{L_t^2 L_x^\I}\norm{u_{N_2}}_{L_t^\I L_x^2}\\
\loe & C(A, N_0) \frac{N_1^{\frac13}}{N_2^{\frac13}} N^{-\ep} \normb{\jb{\nabla}^{\frac56+\ep} v_N}_{L_t^2 L_x^\I}\normb{\jb{\nabla}^{\frac13}u_{N_2}}_{L_t^\I L_x^2} \\
\loe & C(A, N_0) \frac{N_1^{\frac13}}{N_2^{\frac13}} N^{-\ep} \norm{v}_{\wt Y^s([T,+\I))}.
}
Then, by \eqref{esti:scattering-space-time-nablav-low-1} and \eqref{esti:scattering-space-time-nablav-low-2},
\EQ{
\eqref{eq:scattering-space-time-nablav-low}\loe & C \sum_{N_1\loe N_2\ll N} \sup_{\norm{g}_{L_x^2}=1} \int_T^{+\I}\int (e^{it\De}g_{\sim N})\jb{\nabla} v_N u_{N_1} u_{N_2} \dx\dd t\\
\loe & C(A, N_0) \sum_{N_1\loe N_2\ll N} \frac{N_1^{\frac13}}{N_2^{\frac13}} N^{-\ep} \norm{v}_{\wt Y^s([T,+\I))} \lsm C(A, N_0) \norm{v}_{\wt Y^s([T,+\I))}.
}
Then,
\EQn{\label{esti:scattering-space-time-nablav}
\lim_{T\ra+\I} \eqref{eq:scattering-space-time-nablav-low} = 0.
}
It follows from
\eqref{esti:scattering-space-time-nablaw}, \eqref{esti:scattering-space-time-nablav-high}, and  \eqref{esti:scattering-space-time-nablav} that \eqref{eq:scattering-space-time} holds. This finishes the proof of Proposition \ref{prop:global-derterministic}.

\bigskip
\section*{Acknowledgment}
J. Shen and Y. Wu are partially supported by NSFC 12171356 and 11771325.
A. Soffer is partially supported by the Simons' Foundation (No. 395767). 

The authors would like to thank the editor and anonymous referees for valuable comments and suggestions.

\end{document}